\title[Waring and cactus ranks and SLP for annihilators of symmetric
forms]{Waring and cactus ranks and Strong Lefschetz Property for annihilators of symmetric forms}
\author[M.\
Boij]{Mats Boij} \address{Department of Mathematics, KTH Royal
  Institute of Technology, S-100 44 Stockholm, Sweden}
\email{boij@kth.se} \author[J.\ Migliore]{Juan Migliore}
\address{ Department of Mathematics, University of Notre Dame, Notre
  Dame, IN 46556, USA} \email{migliore.1@nd.edu} \author[R.\
M.\ Mir\'o-Roig]{Rosa M.\ Mir\'o-Roig} \address{
  Facultat de Matem\`atiques i Inform\`atica. Universitat de Barcelona. Gran
Via 585. 08007 Barcelona. Spain}\email{miro@ub.edu}
\author[U.\ Nagel]{Uwe Nagel} \address{Department of
  Mathematics, University of Kentucky, 715 Patterson Office Tower,
  Lexington, KY 40506-0027, USA} \email{uwe.nagel@uky.edu}
\newtheorem{thm}{Theorem}[section]
\newtheorem{prop}[thm]{Proposition}
\newtheorem{lemma}[thm]{Lemma}
\newtheorem{cor}[thm]{Corollary}
\theoremstyle{definition}
\newtheorem{dfn}[thm]{Definition}
\newtheorem{rmk}[thm]{Remark}
\newtheorem{notation}[thm]{Notation}
\newcommand{\ann}{\operatorname{ann}}
\thanks{The authors want to thank CIRM in Trento, MFO in Oberwolfach
  and the University of Kentucky for support and hospitality.
 The second author was partially supported by a Simons Foundation grant \#309556.
The third author was partially supported by MTM2016-78623-P. The fourth author was partially supported by Simons Foundation grants \#317096 and \#636513.}
\begin{document}
\maketitle
\begin{abstract}
  In this note we show that the complete symmetric
  polynomials are dual generators of compressed artinian Gorenstein
  algebras satisfying the Strong Lefschetz Property. This is the first
  example of an explicit dual form with these properties.

  For complete symmetric forms of any degree in any number of
  variables, we provide an upper bound for the Waring rank by
  establishing an explicit power sum decomposition. 

  Moreover, we determine the Waring rank, the cactus rank, the
  resolution and the
  Strong Lefschetz Property for any Gorenstein
  algebra defined by a symmetric cubic form. In particular, we show
  that the difference between the Waring rank and the cactus rank of a
  symmetric cubic form can be made arbitrarily large by increasing the
  number of variables.

  We provide upper bounds for the Waring rank of generic symmetric
  forms of degrees four and five. 
\end{abstract}


\section{Introduction}

Let $k$ be a field of characteristic $0$ and consider the polynomial
ring $S = k[x_1,x_2,\dots,x_n]$ and its inverse system
$E = k[X_1,X_2,\dots,X_n]$; more precisely, $E$ is an $S$-module where
$x_i$ acts as $\frac{\partial}{\partial X_i}$. It is a well-known fact
(\cite[Lemma 3.12]{IK}) that if $f$ is a general polynomial of degree $d$ in $E$ then the annihilator $J = \hbox{ann}(f)$ in $S$ defines an artinian Gorenstein algebra $S/J$ with compressed Hilbert function, meaning that the ``first half" of the Hilbert function of $S/J$ agrees with the Hilbert function of $S$, and the second half is of course determined by symmetry.

This begs the question of how ``general" $f$ has to be  in order to obtain a compressed Hilbert function in this way; in particular, can one exhibit a specific polynomial and prove that it has this property?
This problem arose in conversation during a Research in Pairs involving the authors, in Trento in 2009, while working on a different project. At that time, computer experiments showed that the complete symmetric polynomial (i.e. the sum of all the monomials of degree $d$) has this property. In the intervening years the authors were able to prove this fact, and in the process many related and intriguing problems presented themselves. This paper is the result.

In Section \ref{slp section}, in Theorem \ref{SLP result}  we give a proof of the fact just mentioned, that when $f = h_d$ is the complete symmetric polynomial then $S/\hbox{ann}(h_d)$ has compressed Hilbert function. (The fact that $S/\hbox{ann}(f)$ has compressed Hilbert function was also proven by
F.~Gesmundo and J.~M.~Landsberg \cite{GL}, with a different method.) In fact, Theorem \ref{SLP result} proves not only this, but that   $S/\hbox{ann}(h_d)$ even satisfies the {\it  Strong Lefschetz Property} (SLP). We recall that a graded artinian $k$-algebra $A$ satisfies the {\it Weak Lefschetz Property} (WLP) if multiplication by a general linear form $\times \ell$ from any component to the next has maximal rank, and it satisfies SLP if maximal rank also holds for $\times \ell^j : [A]_i \rightarrow [A]_{i+j}$ for all $i \geq 0$ and $j \geq 0$.  In the same section we also give a
useful decomposition  of $\hbox{ann}(h_d)$ (Theorem \ref{decomp}), separating into the cases where $d$ is even or odd. (The result for even degree is a bit stronger.)

  One of the main topics of this paper concerns the Waring rank, and we begin looking at this in Section
\ref{ps decomp section}. If $f$ is a homogeneous polynomial of degree
$m$, recall that its {\it Waring rank} is the smallest $k$ so that $f$
can be written in the form $L_1^m + \dots + L_k^m$ for linear forms
$L_1,\dots,L_k$, and a {\it power sum decomposition} is any expression
of $f$ in this form (not necessarily minimal). Let us denote by
$h_{n,m}$ the complete symmetric polynomial of degree $m$ in $n$
variables. The main result of Section \ref{ps decomp section}  is a
specific power sum decomposition for  $h_{n,2d+1}$ and separately for
$h_{n,2d}$, for any $n$ and $d$. In both the even and odd cases, our
power sum decomposition has $\binom{n+d}{d}$ terms, giving an upper
bound for the Waring rank. Thus our bound is a polynomial of degree
$d$ in $n$. Note that the Waring rank for general forms of degree
$\delta$ grows like a
polynomial of degree $\delta-1$ in $n$ (see Remark
\ref{rmk:bounds}). Later in the paper we will show that  our bound for
$h_{n,3}$ is sharp (see Theorem \ref{thm:powersums}), and we believe
that it is sharp for all odd degrees. In the case of even degrees,
we note that fewer terms may be possible at least
in some cases (see Remark \ref{rmk:bounds}).

In Section~\ref{cubic sect}, we consider only symmetric cubic
polynomials.
This is, in a sense, the heart of this paper. By choosing a very special basis for the vector space of symmetric forms of degree 3, we can view the family of all such polynomials as being parameterized by a projective plane, $\mathbb P^2$. Inside of this plane, we consider the forms that have Waring rank at most $n$. We show that this locus can be viewed as a rational cuspidal cubic curve, $\mathcal C$, and we give its equation (Lemma \ref{lemma:curve}).
From our perspective there are two important points on $\mathcal C$:
the (unique) flex point $\mathcal Q$   and the cusp $\mathcal P$. Then
there are three important lines: the line $\ell_1$ joining $\mathcal
P$ and $\mathcal Q$, the tangent line $\ell_2$ to $\mathcal C$ at
$\mathcal P$ (meaning the unique line that meets $\mathcal C$ at
$\mathcal P$ and no other point), and the tangent $\ell_3$ to $\mathcal C$ at
$\mathcal Q$.  We give the equations of these three lines.

With this preparation, we divide the points of $\mathbb P^2$ as
follows: $\mathcal P$, $\mathcal Q$, the remaining points of $\mathcal
C$, the remaining points of $\ell_1$, the remaining points of
$\ell_2$, the remaining points of $\ell_3$, and the remaining points of
$\mathbb P^2$. For each point of $\mathbb P^2$, depending on which of
these categories it falls into, we give the Hilbert function of the
corresponding algebra, a description of the generators of $\hbox{ann}
(F)$, the minimal free resolution, and both the Waring rank and the
cactus rank of the defining symmetric polynomial. Recall that the
{\em cactus rank} of a form $F$ is the smallest degree of a
zero-dimensional subscheme whose saturated ideal is contained in
$\hbox{ann}(F)$. We show as a consequence that for symmetric cubic
forms, the Waring rank and the cactus rank are equal except for points
on $\ell_2$ other than $\mathcal P$ (Corollary
\ref{compare}). We end this section by showing that for {\em any} symmetric cubic form $F$, the corresponding algebra $S/\hbox{ann}(F)$ satisfies the SLP (Proposition \ref{cubic SLP}).

In the last section, we look at the symmetric generic Waring rank,
that is the Waring rank of a generic symmetric form. We provide upper
bounds for this generic rank for quartics and quintics, generalizing
the results obtained for cubics in the previous section. These bounds
are close to the lower bounds given by the Hilbert function, but we
cannot establish the actual symmetric generic rank. 


\section{The SLP for the artinian Gorenstein algebra defined by a
  complete symmetric polynomial} \label{slp section}

Let $k$ be a field of characteristic $0$ and consider the polynomial
ring $S = k[x_1,x_2,\dots,x_n]$ and its inverse system
$E = k[X_1,X_2,\dots,X_n]$ which means that $E$ is an $S$-module where
$x_i$ acts as $\frac{\partial}{\partial X_i}$.

The \emph{homogeneous}, or \emph{complete}, symmetric polynomials are
defined as
\[
h_d(X_1,X_2,\dots,X_n) = \sum_{\underline i\in \mathbb N^n,
  |\underline i|=d} X_1^{i_1}X_2^{i_2}\cdots X_n^{i_n}.
\]

The goal of this section is to prove that the Gorenstein algebra $S/\operatorname{ann}(h_e)$ is compressed and satisfies the SLP (Theorem \ref{SLP result}).  The fact that it is compressed was also shown in \cite{GL}.

We fix $n$ and in the formal power series ring $k[[X_1,X_2,\dots,X_n]]$, which is
also a graded $S$-module, we define $H = \sum_{d=0}^\infty h_d =
\sum_{\underline i\in \mathbb N^n} X_1^{i_1}X_2^{i_2}\cdots
X_n^{i_n}$.
We will often use $\ell$ to denote the linear form $x_1+x_2+\dots+x_n$ in
$S$.

\begin{lemma}  The inverse
  system of the ideal $(\ell) = (x_1+x_2+\dots+x_n)$ is given by the
  subring $E'=k[X_1-X_n,X_2-X_n,\dots,X_{n-1}-X_n]$ considered as an
  $S$-module.
\end{lemma}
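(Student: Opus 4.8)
The statement asserts that the Macaulay inverse system of the principal ideal $(\ell)$, where $\ell = x_1 + x_2 + \dots + x_n$, equals the subring $E' = k[X_1 - X_n, X_2 - X_n, \dots, X_{n-1} - X_n]$ viewed inside $E = k[X_1, \dots, X_n]$. Recall that for an ideal $I \subseteq S$ the inverse system $I^\perp$ is the $S$-submodule of $E$ annihilated by $I$ under the contraction (differentiation) action; and since $(\ell)$ is generated by one element, $I^\perp = \{ g \in E : \ell \circ g = 0\}$, that is, the kernel of the differential operator $\partial_{X_1} + \partial_{X_2} + \dots + \partial_{X_n}$ acting on $E$.

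The plan is to identify this kernel explicitly. First I would make the linear change of variables on $E$ given by $Y_i = X_i - X_n$ for $i = 1, \dots, n-1$ and $Y_n = X_n$; this is invertible, so $E = k[Y_1, \dots, Y_n]$. Under this substitution one computes that the operator $D = \partial_{X_1} + \dots + \partial_{X_n}$ becomes simply $\partial_{Y_n}$: indeed each $\partial_{X_i}$ for $i < n$ equals $\partial_{Y_i}$, while $\partial_{X_n} = \partial_{Y_n} - \sum_{i=1}^{n-1}\partial_{Y_i}$ (by the chain rule, since $Y_n = X_n$ and $Y_i = X_i - X_n$ so $X_n$ affects $Y_i$ with coefficient $-1$), and these cancel to leave $D = \partial_{Y_n}$. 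Then the kernel of $\partial_{Y_n}$ on $k[Y_1,\dots,Y_n]$ is exactly the subring $k[Y_1, \dots, Y_{n-1}]$ of polynomials not involving $Y_n$, which is precisely $E' = k[X_1 - X_n, \dots, X_{n-1} - X_n]$. This shows $(\ell)^\perp = E'$ as a $k$-vector space; that it is an $S$-submodule is automatic since inverse systems always are.

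The only genuinely substantive point is the change-of-variables computation showing $D = \partial_{Y_n}$, and even that is a short routine verification. A secondary point worth a sentence is why it suffices to compute the kernel of $\ell$ alone rather than of all elements of $(\ell)$: since $(\ell) = S \cdot \ell$ and the action is by $S$-module homomorphisms, $g$ is killed by everything in $(\ell)$ iff it is killed by $\ell$. So I do not anticipate a real obstacle; the main care is simply to keep the chain rule signs straight and to note that the change of variables is linear, hence respects the grading and the module structure, so that $E'$ really is the claimed graded $S$-submodule.
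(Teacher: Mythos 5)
Your proof is correct. It takes a mildly different route from the paper's: the paper first observes that $\ell$ annihilates every polynomial in $X_1-X_n,\dots,X_{n-1}-X_n$ (giving the containment $E'\subseteq(\ell)^\perp$) and then concludes equality by a dimension count, comparing $\dim E'_d=\binom{n-2+d}{n-2}$ with the dimension of the degree-$d$ piece of the inverse system of $(\ell)$. You instead identify the kernel exactly in one step, by the linear change of coordinates $Y_i=X_i-X_n$ ($i<n$), $Y_n=X_n$, under which the operator $\partial_{X_1}+\dots+\partial_{X_n}$ becomes $\partial_{Y_n}$, whose kernel in characteristic $0$ is visibly $k[Y_1,\dots,Y_{n-1}]=E'$. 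Your chain-rule computation is right, and your remark that it suffices to test annihilation by the single generator $\ell$ is the correct justification (the paper leaves this implicit). What your approach buys is self-containedness: you do not need the standard fact that $\dim_k (I^\perp)_d=\dim_k(S/I)_d$, which underlies the paper's dimension claim. What the paper's approach buys is brevity and a template (containment plus Hilbert-function comparison) that recurs later in the article, e.g.\ in the proofs of Theorems \ref{SLP result} and \ref{decomp}.
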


\begin{proof}
  The linear form $\ell$ annihilates any polynomial in
  $X_1-X_n,X_2-X_n,\dots,X_{n-1}-X_n$ and the dimension of the inverse
  system is $\binom{n-2+d}{n-2}$ in degree $d$ which equals the
  dimension of $E'_d$. Thus $E'$ is the inverse system of the ideal
  $(\ell)$.
\end{proof}

\begin{lemma}\label{lemma:ell_on_hd}
  $\ell\circ h_d = (n+d-1) h_{d-1}.$
\end{lemma}

\begin{proof}
  The monomial $X_1^{i_1}X_2^{i_2}\cdots X_n^{i_n}$ of degree $d-1$
  occurs from the derivatives
  $x_j\circ X_j X_1^{i_1}X_2^{i_2}\cdots X_n^{i_n} = (i_j+1)
  X_1^{i_1}X_2^{i_2}\cdots X_n^{i_n}$.
  The sum of these contributions is $n+\sum i_j =n+d-1$.
\end{proof}

\begin{dfn}
  Let $\Phi\colon S\longrightarrow E$ be the $k$-linear map defined on
  the monomial basis by
  \[\Phi(x_1^{i_1}x_2^{i_2}\cdots x_n^{i_n}) = i_1! i_2!\cdots i_n!
  X_1^{i_1}X_2^{i_2}\cdots X_n^{i_n}.\]
  for each $\underline i = (i_1,i_2,\dots,i_n)\in \mathbb N^n$.
\end{dfn}

\begin{rmk}
  Note that the inverse image of the monomial basis of $E_d$ gives the
  dual basis with respect to the action. Moreover, powers of $\ell$
  are sent to multiples of the homogeneous symmetric
  polynomials, $\Phi(\ell^d) = d!h_d$.
\end{rmk}

\begin{dfn}
  For any $\underline a = (a_1,a_2,\dots,a_{n-1})\in \mathbb N^{n-1}$
  we define
  \[F_{\underline a} =
  (X_1-X_n)^{a_1}(X_2-X_n)^{a_2}\cdots(X_{n-1}-X_n)^{a_{n-1}}
  \quad\text{and}\quad f_{\underline a} = \Phi^{-1}(F_{\underline
    a}).\]
\end{dfn}

\begin{lemma}\label{lemma_fa_o_H}
  For any $\underline a = (a_1,a_2,\dots,a_{n-1})$ we have that
  \[f_{\underline a} \circ H = \frac{F_{\underline
      a}}{(1-X_1)^{1+a_1}(1-X_2)^{1+a_2} \cdots
    (1-X_{n-1})^{1+a_{n-1}}(1-X_n)^{1+\sum_{i=1}^{n-1} a_i}}.\]
\end{lemma}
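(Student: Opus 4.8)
The plan is to compute $f_{\underline a}\circ H$ directly, reducing to the one‑variable situation coordinate by coordinate. The key observation is that $H$ factors as a product of geometric series: since $H = \sum_{\underline i\in\mathbb N^n} X_1^{i_1}\cdots X_n^{i_n}$, we have $H = \prod_{j=1}^n \frac{1}{1-X_j}$ in $k[[X_1,\dots,X_n]]$. So I would first record this factorization, and then understand how the differential operator $f_{\underline a}$ acts on such a product. Because $f_{\underline a} = \Phi^{-1}(F_{\underline a})$ and $F_{\underline a} = \prod_{j=1}^{n-1}(X_j-X_n)^{a_j}$, I need to translate the ``dual'' element $F_{\underline a}$ back to a differential operator in the $x_i$. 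The point of the map $\Phi$ is precisely that $\Phi^{-1}$ turns a monomial $X_1^{i_1}\cdots X_n^{i_n}$ into $\frac{1}{i_1!\cdots i_n!}x_1^{i_1}\cdots x_n^{i_n}$, so that the operator $\Phi^{-1}(X_1^{i_1}\cdots X_n^{i_n})$ acts on power series by the ``divided‑power'' derivative $\prod_j \frac{1}{i_j!}\frac{\partial^{i_j}}{\partial X_j^{i_j}}$; equivalently, its action on a product $\prod_j g_j(X_j)$ is $\prod_j$ of the Taylor‑coefficient‑extracting operator applied to $g_j$.

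Next I would linearize. Expanding $F_{\underline a} = \prod_{j=1}^{n-1}(X_j-X_n)^{a_j}$ by the multinomial theorem writes $F_{\underline a}$ as a $k$‑linear combination of monomials $X_1^{b_1}\cdots X_{n-1}^{b_{n-1}}X_n^{c}$ with $b_j\le a_j$ and $c = \sum_j(a_j-b_j)$; applying $\Phi^{-1}$ and then acting on $H=\prod_j\frac{1}{1-X_j}$, each such monomial operator acts independently in each variable. In the variable $X_j$ ($1\le j\le n-1$) the relevant computation is the elementary identity $\frac{1}{b!}\frac{\partial^{b}}{\partial X^{b}}\frac{1}{1-X} = \frac{1}{(1-X)^{1+b}}$; in the variable $X_n$ it is the same identity with $b$ replaced by $c$. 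Re‑summing the resulting products of geometric series should, after pulling out the common denominators $\prod_{j=1}^{n-1}(1-X_j)^{1+a_j}\cdot(1-X_n)^{1+\sum a_j}$, reconstitute exactly $\prod_{j=1}^{n-1}(X_j-X_n)^{a_j}=F_{\underline a}$ in the numerator, which is the claimed formula.

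Alternatively — and this may be the cleanest route — one can avoid expanding $F_{\underline a}$ by working with a generating function in auxiliary parameters: replace each factor $(X_j-X_n)^{a_j}$ by $e^{t_j(X_j-X_n)}$, i.e.\ consider $G = \prod_{j=1}^{n-1}e^{t_j(X_j-X_n)}$, whose coefficient of $\prod t_j^{a_j}/a_j!$ is $F_{\underline a}$ and which is again a product over the variables (the variable $X_n$ picking up the factor $e^{-(t_1+\dots+t_{n-1})X_n}$). Since $\Phi^{-1}$ respects this variable‑by‑variable structure, and since the one‑variable operator $\Phi^{-1}(e^{tX})$ acts on a power series $g(X)$ by sending it to $g(X+t)$ (a shift), the whole computation collapses to: $f$‑applied version of $\prod_j \frac{1}{1-X_j}$ is $\prod_{j=1}^{n-1}\frac{1}{1-X_j-t_j}\cdot\frac{1}{1-X_n+t_1+\dots+t_{n-1}}$, and then extracting the $\prod t_j^{a_j}/a_j!$ coefficient with the one‑variable identity $\partial_t^{a}\big|_{t=0}\frac{1}{(1-X)-t} = a!\,\frac{1}{(1-X)^{1+a}}$ gives the result. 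The main obstacle is purely bookkeeping: keeping the roles of the variable $X_n$ (which appears in every factor of $F_{\underline a}$, hence accumulates the exponent $1+\sum a_i$) straight, and being careful that the formal operations (substitution of exponentials, interchange of summation, action of infinite‑order operators on power series) are legitimate in $k[[X_1,\dots,X_n]]$ — which they are, since everything is graded and the relevant sums are locally finite in each degree. No genuine analytic or structural difficulty is expected; the content is the two elementary one‑variable identities above.
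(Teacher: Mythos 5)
Your first route is essentially the paper's proof verbatim: expand $F_{\underline a}=\prod_{j=1}^{n-1}(X_j-X_n)^{a_j}$ by the binomial theorem into monomials, apply $\Phi^{-1}$ to get divided-power operators, use the one-variable identity $\tfrac{1}{b!}\partial_X^{b}\tfrac{1}{1-X}=\tfrac{1}{(1-X)^{1+b}}$ on $H=\prod_j(1-X_j)^{-1}$, and resum; the paper carries out the resummation explicitly by recognizing the sum over $\underline j\le\underline a$ as $\prod_i\bigl(1-\tfrac{1-X_n}{1-X_i}\bigr)^{a_i}$ times the common denominator, which is exactly the ``reconstitution'' you describe. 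That part is correct and complete in outline.

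Your proposed ``cleanest'' alternative, however, contains a genuine error: $\Phi^{-1}(e^{tX})$ is \emph{not} the shift operator. Since $\Phi^{-1}(X^k)=x^k/k!$ acts as $\tfrac{1}{k!}\partial_X^k$, the operator $\Phi^{-1}(e^{tX})$ acts as $\sum_k\tfrac{t^k}{(k!)^2}\partial_X^k$, which carries an extra $1/k!$ compared with the true shift $e^{t\partial_X}=\sum_k\tfrac{t^k}{k!}\partial_X^k$; the shift is instead $e^{tx}=\Phi^{-1}\bigl(\tfrac{1}{1-tX}\bigr)$. Consequently $\Phi^{-1}(e^{tX})\circ\tfrac{1}{1-X}=\tfrac{1}{1-X}\,e^{t/(1-X)}$, not $\tfrac{1}{1-X-t}$, and the shift formula already gives the wrong answer for $n=2$, $a=2$ (the cross term comes out with coefficient $-1$ instead of $-2$). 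The generating-function idea can be repaired, though, and then it is genuinely slick: with the correct action one gets
\[
\Phi^{-1}\Bigl(\prod_{j=1}^{n-1}e^{t_j(X_j-X_n)}\Bigr)\circ H
= H\cdot\prod_{j=1}^{n-1}\exp\Bigl(t_j\,\frac{X_j-X_n}{(1-X_j)(1-X_n)}\Bigr),
\]
using $\tfrac{1}{1-X_j}-\tfrac{1}{1-X_n}=\tfrac{X_j-X_n}{(1-X_j)(1-X_n)}$, and extracting the coefficient of $\prod_j t_j^{a_j}/a_j!$ yields exactly the claimed rational function. So: keep route one (it is the paper's argument), or fix the operator identity before relying on route two.
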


\begin{proof}
  We expand $f_{\underline a}$ by the binomial theorem as
  \begin{multline*}
    \Phi^{-1}\left( \sum_{\underline j\in \mathbb N^{n-1},\underline
        j\le \underline a} (-1)^{\sum_{i=1}^{n-1} a_i-j_i}
      \binom{a_1}{j_1}\binom{a_2}{j_2}\cdots \binom{a_{n-1}}{j_{n-1}}
      X_1^{j_1}X_2^{j_2}\cdots X_{n-1}^{j_{n-1}} X_n^{\sum_{i=1}^{n-1} a_i-j_i}\right) \\
    = \sum_{\underline j\in \mathbb N^{n-1},\underline j\le \underline
      a} (-1)^{\sum_{i=1}^{n-1} a_i-j_i}
    \binom{a_1}{j_1}\binom{a_2}{j_2}\cdots \binom{a_{n-1}}{j_{n-1}}
    \frac{x_1^{j_1}x_2^{j_2}\cdots x_{n-1}^{j_{n-1}} x_n^{\sum_{i=1}^{n-1} a_i-j_i} }{j_1!j_2!\cdots j_{n-1}!(\sum_{i=1}^{n-1} a_i-j_i)!}.\\
  \end{multline*}
  Now look at the contribution to $f_{\underline a}\circ H$ by each
  term of this sum. The only monomials in $H$ that contribute are the
  multiples of the corresponding monomial in the variables
  $X_1,X_2,\dots,X_n$.  We get that
  \begin{multline*}
    x_1^{j_1}x_2^{j_2}\cdots x_{n-1}^{j_{n-1}}x_n^{\sum_{i=1}^{n-1}
      a_i-j_i} \circ H = x_1^{j_1}x_2^{j_2}\cdots
    x_{n-1}^{j_{n-1}}x_n^{\sum_{i=1}^{n-1} a_i-j_i} \circ
    X_1^{j_1}X_2^{j_2}\cdots X_{n-1}^{j_{n-1}}X_n^{\sum_{i=1}^{n-1}
      a_i-j_i} H \\= \sum_{\underline b\in \mathbb N^n}
    x_1^{j_1}x_2^{j_2}\cdots x_{n-1}^{j_{n-1}}x_n^{\sum_{i=1}^{n-1}
      a_i-j_i} \circ X_1^{b_1+j_1}X_2^{b_2+j_2}\cdots
    X_{n-1}^{b_{n-1}+j_{n-1}}X_n^{b_n+\sum_{i=1}^{n-1} a_i-j_i} \\=
    \sum_{\underline b\in \mathbb N^n} \frac{(b_1+j_1)!}{b_1!}
    \frac{(b_2+j_2)!}{b_2!}  \cdots
    \frac{(b_{n-1}+j_{n-1})!}{b_{n-1}!}  \frac{(b_n+\sum
      a_i-j_i)!}{b_n!}  X_1^{b_1}X_2^{b_2}\cdots X_n^{b_n} \\=
    \frac{j_1!}{(1-X_1)^{1+j_1}} \frac{j_2!}{(1-X_2)^{1+j_2}} \cdots
    \frac{j_{n-1}!}{(1-X_{n-1})^{1+j_{n-1}}} \frac{(\sum
      a_i-j_i)!)}{(1-X_n)^{1+\sum_{i=1}^{n-1} a_i-j_i}}.
  \end{multline*}
  Summing over the terms of $f_{\underline a}$ now gives
  \begin{multline*}
    f_{\underline a} \circ H = \sum_{\underline j \in \mathbb N^{n-1},
      \underline j\le \underline a}
    \frac{(-1)^{\sum_{i=1}^{n-1} a_i-j_i}\binom{a_1}{j_1}\binom{a_2}{j_2}\cdots
      \binom{a_{n-1}}{j_{n-1}}}{(1-X_1)^{1+j_1}(1-X_2)^{1+j_2}\cdots(1-X_{n-1})^{1+j_{n-1}}(1-X_n)^{1+\sum
        a_i-j_i}} \\= \left(1 - \frac{1-X_n}{1-X_1}\right)^{a_1}
    \left(1 - \frac{1-X_n}{1-X_2}\right)^{a_2} \cdots \left(1 -
      \frac{1-X_n}{1-X_{n-1}}\right)^{a_{n-1}} \frac{1}{(1-X_n)^{\sum
        a_i}} \frac{(-1)^{\sum_{i=1}^{n-1} a_i}}{\prod (1-X_i)}
    \\=(-1)^{\sum_{i=1}^{n-1} a_i}
    \frac{(X_n-X_1)^{a_1}}{(1-X_1)^{1+a_1}}
    \frac{(X_n-X_2)^{a_2}}{(1-X_2)^{1+a_2}} \cdots
    \frac{(X_n-X_{n-1})^{a_{n-1}}}{(1-X_{n-1})^{1+a_{n-1}}}
    \frac{1}{(1-X_n)^{1+\sum a_i}}\\= \frac{F_{\underline a}
    }{(1-X_1)^{1+a_1}(1-X_2)^{1+a_2} \cdots
      (1-X_{n-1})^{1+a_{n-1}}(1-X_n)^{1+\sum_{i=1}^{n-1} a_i}}
  \end{multline*}
  which concludes the proof of the lemma.
\end{proof}

\begin{lemma}\label{lemma_fa_on_h2d}
  For $\underline a = (a_1,a_2,\dots,a_{n-1})$ in $\mathbb N^{n-1}$
  with $|\underline a|=d$ we have that
  \begin{enumerate}
  \item $f_{\underline a} \circ h_i = 0$, for $i<2d$.
  \item $f_{\underline a} \circ h_{2d} = F_{\underline a}$.
  \item
    $f_{\underline a} \circ h_{2d+m} = F_{\underline a}
    G_{m,\underline a}$, where
    \[G_{m,\underline a} = \sum_{\underline i\in \mathbb N^n,
      |\underline i|=m} \binom{a_1+i_1}{i_1} \binom{a_2+i_2}{i_2}
    \cdots \binom{a_{n-1}+i_{n-1}}{i_{n-1}} \binom{\sum a_j +
      i_n}{i_n} X_1^{i_1}X_2^{i_2}\cdots X_n^{i_n}.\]
  \end{enumerate}
\end{lemma}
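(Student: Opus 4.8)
The plan is to obtain all three parts at once by extracting homogeneous components from the generating function identity of Lemma~\ref{lemma_fa_o_H}. First I would record two bookkeeping facts about gradings. Since $F_{\underline a}$ is homogeneous of degree $|\underline a| = d$ and $\Phi$ preserves degree, the differential operator $f_{\underline a} = \Phi^{-1}(F_{\underline a}) \in S$ is homogeneous of degree $d$; hence $f_{\underline a}\circ h_i$ is homogeneous of degree $i - d$ when $i \ge d$, and it is $0$ when $i < d$. Since $H = \sum_{i\ge 0} h_i$ with $h_i$ homogeneous of degree $i$, the homogeneous component of $f_{\underline a}\circ H$ in degree $j$ is precisely $f_{\underline a}\circ h_{j+d}$.

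Next I would expand the right-hand side of Lemma~\ref{lemma_fa_o_H} as $F_{\underline a}\cdot P$, where
\[
P = \frac{1}{(1-X_1)^{1+a_1}\cdots(1-X_{n-1})^{1+a_{n-1}}(1-X_n)^{1+\sum_{i=1}^{n-1}a_i}}.
\]
Applying the standard expansion $\frac{1}{(1-X)^{k+1}} = \sum_{i\ge 0}\binom{k+i}{i}X^i$ to each factor and multiplying out, the degree-$m$ homogeneous component $P_m$ of $P$ is exactly the polynomial $G_{m,\underline a}$ from the statement (note the $X_n$-factor has exponent $1+\sum a_i$, producing the binomial coefficient $\binom{\sum a_j + i_n}{i_n}$), and in particular $P_0 = 1$. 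Because $F_{\underline a}$ is homogeneous of degree $d$, the degree-$j$ homogeneous component of $F_{\underline a}\cdot P$ is $F_{\underline a}\,P_{j-d}$ when $j\ge d$ and is $0$ when $j < d$.

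Finally I would compare the two sides of Lemma~\ref{lemma_fa_o_H} in each degree. Equating the degree-$j$ components gives $f_{\underline a}\circ h_{j+d} = F_{\underline a}\,P_{j-d}$ for $j \ge d$ and $f_{\underline a}\circ h_{j+d} = 0$ for $j < d$. Writing $i = j+d$: the case $i < 2d$ is part~(1); the case $i = 2d$ gives $f_{\underline a}\circ h_{2d} = F_{\underline a}\,P_0 = F_{\underline a}$, which is part~(2); and the case $i = 2d+m$ with $m \ge 0$ gives $f_{\underline a}\circ h_{2d+m} = F_{\underline a}\,P_m = F_{\underline a}\,G_{m,\underline a}$, which is part~(3).

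I do not expect a genuine obstacle here: all the substance has already been spent in proving Lemma~\ref{lemma_fa_o_H}, and what remains is degree bookkeeping together with the routine identification of $P_m$ with $G_{m,\underline a}$. The only two points deserving a moment's attention are that $f_{\underline a}\circ h_i$ vanishes automatically for $i < d$ (a homogeneous degree-$d$ operator kills polynomials of lower degree), so that the vanishing of $f_{\underline a}\circ h_i$ in fact persists all the way to $i = 2d-1$ once one multiplies by the degree-$d$ form $F_{\underline a}$, and the care needed to match the exponent $1+\sum_{i=1}^{n-1}a_i$ in the last factor of $P$ with the coefficient $\binom{\sum a_j + i_n}{i_n}$ appearing in $G_{m,\underline a}$.
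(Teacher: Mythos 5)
Your proposal is correct and is essentially the paper's own argument, just written out in more detail: the paper likewise deduces parts (1) and (2) from the fact that the lowest-degree term of $f_{\underline a}\circ H$ in Lemma~\ref{lemma_fa_o_H} is $F_{\underline a}$ in degree $d$, and part (3) from the power series expansion of the product of the factors $(1-X_i)^{-(1+a_i)}$. Your explicit degree bookkeeping and the identification of $P_m$ with $G_{m,\underline a}$ fill in exactly the steps the paper leaves implicit.
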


\begin{proof}
  The first two statements follow from the fact that the initial term in
  $f_{\underline a}\circ h_{2d}$ is $F_{\underline a}$ which has degree $d$. The
  last statement is given by the power series expansion of
  \[
  \frac{1} {(1-X_1)^{1+a_1}(1-X_2)^{1+a_2} \cdots
    (1-X_{n-1})^{1+a_{n-1}}(1-X_n)^{1+\sum_{i=1}^{n-1} a_i}}.
  \]
\end{proof}

\begin{lemma} For $\underline a = (a_1,a_2,\dots,a_{n-1})$ and
  $\underline b = (b_1,b_2,\dots,b_{n-1})$ in $\mathbb N^{n-1}$ with
  $|\underline a|=|\underline b|=d$, we have that
  \[
  f_{\underline a} \circ F_{\underline b} = \prod_{i=1}^{n-1}
  \binom{a_i+b_i}{a_i}.
  \]
\end{lemma}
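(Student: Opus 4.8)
The plan is to compute $f_{\underline a}\circ F_{\underline b}$ directly, by expanding both factors into monomials — using for $f_{\underline a}=\Phi^{-1}(F_{\underline a})$ the binomial expansion already written down in the proof of Lemma~\ref{lemma_fa_o_H} — and then recognizing the resulting sum as a product of Vandermonde sums. The hypothesis $|\underline a|=|\underline b|=d$ guarantees that $f_{\underline a}$ and $F_{\underline b}$ have the same degree, so that $f_{\underline a}\circ F_{\underline b}$ is a scalar.

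First I would write, exactly as in the proof of Lemma~\ref{lemma_fa_o_H},
\[
f_{\underline a}=\sum_{\underline j\in\mathbb N^{n-1},\ \underline j\le\underline a}(-1)^{d-|\underline j|}\binom{a_1}{j_1}\cdots\binom{a_{n-1}}{j_{n-1}}\,\frac{x_1^{j_1}\cdots x_{n-1}^{j_{n-1}}x_n^{\,d-|\underline j|}}{j_1!\cdots j_{n-1}!\,(d-|\underline j|)!},
\]
and similarly $F_{\underline b}=\sum_{\underline k\le\underline b}(-1)^{d-|\underline k|}\binom{b_1}{k_1}\cdots\binom{b_{n-1}}{k_{n-1}}X_1^{k_1}\cdots X_{n-1}^{k_{n-1}}X_n^{\,d-|\underline k|}$, both sums having $d=|\underline a|=|\underline b|$.

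The key step is the term-by-term evaluation of $\circ$. For monomials with $|\underline p|=|\underline q|=d$ one has $x^{\underline p}\circ X^{\underline q}=\underline p!\,\delta_{\underline p,\underline q}$; applied here, the monomial operator indexed by $\underline j$ annihilates the monomial indexed by $\underline k$ unless $j_i=k_i$ for $i<n$ (the exponents of $x_n$ and $X_n$ then automatically agree, both being $d-|\underline j|$), and in that case the factorial normalization in the displayed expansion is precisely what makes the surviving contribution equal to $1$. Hence the signs combine to $(-1)^{2(d-|\underline j|)}=1$ and
\[
f_{\underline a}\circ F_{\underline b}=\sum_{\underline j\in\mathbb N^{n-1}}\binom{a_1}{j_1}\binom{b_1}{j_1}\cdots\binom{a_{n-1}}{j_{n-1}}\binom{b_{n-1}}{j_{n-1}}=\prod_{i=1}^{n-1}\ \sum_{j_i\ge 0}\binom{a_i}{j_i}\binom{b_i}{j_i},
\]
where the sums may be taken over all of $\mathbb N$ in each coordinate since the binomial coefficients vanish outside $\underline j\le\underline a$ and $\underline j\le\underline b$, and the sum factors because the summand does not involve the $n$-th coordinate at all. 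The claim then follows from Vandermonde's identity $\sum_{j}\binom{a_i}{j}\binom{b_i}{j}=\sum_j\binom{a_i}{j}\binom{b_i}{b_i-j}=\binom{a_i+b_i}{a_i}$.

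I do not expect a genuine obstacle: once the expansions are in place the computation is routine. The two points needing a little care are (i) checking that both apolarity matchings force $\underline j=\underline k$ and that the chosen factorials make every surviving term contribute exactly $1$ — this is exactly where the definition of $\Phi$ enters — and (ii) observing that the surviving sum is unconstrained in each of the first $n-1$ coordinates, so that it factors; this is where $|\underline a|=|\underline b|=d$ is used, since it forces the complementary $X_n$-exponent $d-|\underline j|$ to be a nonnegative integer automatically. An equivalent route is to note that $\Phi^{-1}(G')\circ G=\sum_{\underline p}[X^{\underline p}]G'\cdot[X^{\underline p}]G$ for $G',G\in E_d$ (monomials are ``orthonormal'' for this pairing) and then expand $F_{\underline a},F_{\underline b}$ by the binomial theorem; this makes the symmetry $f_{\underline a}\circ F_{\underline b}=f_{\underline b}\circ F_{\underline a}$ transparent but is otherwise the same argument.
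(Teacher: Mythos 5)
Your proposal is correct and follows essentially the same route as the paper's own proof: expand $f_{\underline a}$ and $F_{\underline b}$ by the binomial theorem, observe that the apolarity pairing of equal-degree monomials forces the exponent vectors to match while the factorials from $\Phi^{-1}$ normalize each surviving term to $1$, and then factor the resulting sum into the Vandermonde identity $\sum_j\binom{a_i}{j}\binom{b_i}{j}=\binom{a_i+b_i}{a_i}$. No gaps; your write-up is in fact slightly more explicit than the paper about why the signs cancel and why the sum factors coordinatewise.
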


\begin{proof}
  We have
  \begin{multline*}
    f_{\underline a}\circ F_{\underline b} = \sum_{\underline i\in
      \mathbf N^{n-1}}
    \prod_{k=1}^{n-1}\frac{x_j^{i_k}}{i_k!}\binom{a_k}{i_k}
    \frac{(-x_n)^{d-|\underline i|} }{(d-|\underline i|)!}  \circ
    \sum_{\underline j\in \mathbf N^{n-1}}
    \prod_{k=1}^{n-1}x_k^{j_k}\binom{b_k}{j_k} (-x_n)^{d-|\underline
      j|}
    \\
    = \sum_{\underline i\in \mathbf N^{n-1}}
    \prod_{k=1}^{n-1}\binom{a_k}{i_k} \binom{b_k}{i_k} =
    \prod_{k=1}^{n-1} \sum_{i=0}^\infty \binom{a_k}{i} \binom{b_k}{i}
    = \prod_{i=1}^{n-1} \binom{a_i+b_i}{a_i}
  \end{multline*}
  where we use that
  \[
  \sum_{i=0}^{\infty} \binom{a}{i}\binom{b}{i} = \binom{a+b}{a}
  \]
  which comes from looking at the coefficient of $t^a$ in
  $(1+t)^{a+b} = (1+t)^a(1+t)^b$.
\end{proof}
\begin{dfn}
  For integers $d\ge 0$ let
  $M_d = \Phi^{-1} (E'_d) = \langle\{ f_{\underline a}\colon
  \underline a\in \mathbb N^{n-1}, |\underline a|=d\}\rangle .$
\end{dfn}

\begin{prop}\label{prop:pairing}
  The pairing $\ell^i M_{d-i}\times \ell^j M_{d-j} \longrightarrow k$
  given by $\langle f,g\rangle = (fg)\circ h_{2d}$ is trivial when
  $i\ne j$ and is perfect when $i=j$.
\end{prop}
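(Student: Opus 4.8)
The plan is to use the commutativity of the $S$-action together with the explicit formulas already established for $\ell\circ h_m$ and for $f_{\underline a}\circ h_m$, and then to identify the one genuinely non-formal ingredient as the nonsingularity of a certain matrix of binomial coefficients. First, for $f\in M_{d-i}$ and $g\in M_{d-j}$, since the operators $\ell^i,\ell^j,f,g$ all commute,
\[
(\ell^i f\cdot \ell^j g)\circ h_{2d}=(fg)\circ\bigl(\ell^{i+j}\circ h_{2d}\bigr).
\]
Iterating Lemma~\ref{lemma:ell_on_hd} gives $\ell^{i+j}\circ h_{2d}=c\cdot h_{2d-i-j}$ with $c=\prod_{t=1}^{i+j}(n+2d-t)\ne 0$ (note $i+j\le 2d$ because $d-i,d-j\ge 0$, and we are in characteristic $0$), so the pairing value is $c\cdot(fg)\circ h_{2d-i-j}$.

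For the triviality when $i\ne j$, assume $i<j$ (the other case is symmetric). Then $2d-i-j<2(d-i)$, and since $f$ is a linear combination of the $f_{\underline a}$ with $|\underline a|=d-i$, part (1) of Lemma~\ref{lemma_fa_on_h2d} gives $f\circ h_{2d-i-j}=0$; hence $(fg)\circ h_{2d-i-j}=g\circ\bigl(f\circ h_{2d-i-j}\bigr)=0$, so the pairing is identically zero. For $i=j$, put $e=d-i$. Using that $\{\ell^i f_{\underline a}:|\underline a|=e\}$ is a basis of $\ell^i M_e$ (multiplication by $\ell^i$ is injective on $S$), the Gram matrix of the pairing in this basis is $c\cdot N$ with $c\ne 0$ and, by part (2) of Lemma~\ref{lemma_fa_on_h2d} and the apolarity lemma,
\[
N_{\underline a,\underline b}=(f_{\underline a}f_{\underline b})\circ h_{2e}=f_{\underline a}\circ\bigl(f_{\underline b}\circ h_{2e}\bigr)=f_{\underline a}\circ F_{\underline b}=\prod_{k=1}^{n-1}\binom{a_k+b_k}{a_k}.
\]
It therefore remains only to show $\det N\ne 0$.

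To finish I would observe that $N$ is exactly the Gram matrix, in the basis $\{f_{\underline a}\}$ of $M_e\subseteq S_e$, of the symmetric bilinear form $B(p,q)=p\circ\Phi(q)$ on $S_e$, whose matrix in the monomial basis is diagonal with entries $(\underline i!)^2>0$; thus $B$ is positive definite over $\mathbb R$, and since $M_e$ is defined over $\mathbb Q$ its restriction to $M_e$ is still positive definite, hence nondegenerate, so $\det N\ne 0$ over $k$. (Equivalently, from $\binom{a+b}{a}=\sum_c\binom ac\binom bc$ one gets $N=CC^{\mathsf T}$ with $C=[\prod_k\binom{a_k}{c_k}]$, and the columns of $C$ indexed by the $\underline c$ with $|\underline c|=e$ form an identity block, so $C$ has full row rank and $N$ is positive definite.) Hence the pairing is perfect when $i=j$. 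The computations of $c$ and the multi-index bookkeeping are routine; the only step requiring an actual idea — and hence the main obstacle — is recognizing $N$ as a restriction of the positive-definite apolarity form, which is what forces its nonsingularity.
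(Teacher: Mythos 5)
Your proof is correct and follows the same overall strategy as the paper's: reduce via iterated application of Lemma~\ref{lemma:ell_on_hd} to the pairing $(f,g)\mapsto (fg)\circ h_{2(d-i)}$ on $M_{d-i}\times M_{d-j}$, kill the off-diagonal blocks with Lemma~\ref{lemma_fa_on_h2d}(1), and for $i=j$ identify the pairing with $f\circ\Phi(g)$ using Lemma~\ref{lemma_fa_on_h2d}(2). Where you genuinely add something is the last step. The paper concludes ``the pairing is perfect since $\Phi\colon S_{d-i}\to E_{d-i}$ is invertible,'' which as stated only yields perfection of $(f,g)\mapsto f\circ\Phi(g)$ on all of $S_{d-i}$; a perfect pairing need not restrict to a perfect pairing on a proper subspace such as $M_{d-i}$, so an extra argument is required (this is presumably why the paper records the lemma computing $f_{\underline a}\circ F_{\underline b}=\prod_k\binom{a_k+b_k}{a_k}$ just beforehand, though it never invokes it explicitly). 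You close this gap cleanly: the Gram matrix $N$ is the restriction to $M_{d-i}$ of the form whose monomial-basis matrix is diagonal with entries $(\underline i!)^2$, hence positive definite over $\mathbb Q$ and nonsingular in any field of characteristic zero; your alternative factorization $N=CC^{\mathsf T}$ via Vandermonde's identity, with the identity block showing $C$ has full row rank, is an equally valid and pleasantly explicit way to see $\det N\neq 0$. So your write-up is not a different route but a more complete version of the paper's, and the positive-definiteness observation is exactly the missing justification.
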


\begin{proof}
  We have that a basis for $\ell^i M_{d-i}$ is given by
  $\ell^i f_{\underline a}$ with $|\underline a|=d-i$. We have that
  \[
  (\ell^{i}f_{\underline a} \cdot \ell^jf_{\underline b} )\circ h_{2d}
  = \frac{(n+2d-1)!}{(n+2d-i-j-1)!} f_{\underline a}f_{\underline b}
  \circ h_{2d-i-j}.
  \]
  If $i\ne j$, we have that either $f_{\underline a}$ or
  $f_{\underline b}$ annihilates $h_{2d-i-j}$ by
  Lemma~\ref{lemma_fa_on_h2d}(1) since either $2(d-i)>2d-i-j$ or
  $2(d-j)>2d-i-j$. Hence the pairing is trivial.

  If $i=j$, it suffices to show that the pairing
  $M_{d-i}\times M_{d-i}\longrightarrow k$ given by
  $\langle f,g\rangle = (fg)\circ h_{2(d-i)}$ is perfect. By
  Lemma~\ref{lemma_fa_on_h2d} (2) we get that for $f$ in $M_{d-i}$ we
  have $\Phi(f) = f\circ h_{2(d-i)}$. Hence the pairing is perfect
  since $\Phi\colon S_{d-i}\longrightarrow E_{d-i}$ is invertible.
\end{proof}

\begin{thm} \label{SLP result}
  For each $e\ge 0$ the Gorenstein algebra
  $A = S/\operatorname{ann}(h_e)$ is compressed and satisfies the SLP.
\end{thm}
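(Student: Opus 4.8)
The strategy is to derive both statements from two ingredients: the filtration of $S$ by powers of $\ell$ packaged in Proposition~\ref{prop:pairing}, and the standard criterion that a graded artinian Gorenstein $k$-algebra $A$ of socle degree $e$ has the SLP relative to $\ell$ precisely when the central maps $\times\ell^{\,e-2i}\colon[A]_i\to[A]_{e-i}$ are bijective for all $0\le i\le\lfloor e/2\rfloor$; indeed, once $A$ is known to be compressed --- hence to have a symmetric, unimodal Hilbert function --- a routine duality argument shows every multiplication $\times\ell^{\,j}\colon[A]_m\to[A]_{m+j}$ inherits maximal rank from the central maps. So it suffices to prove that $A=S/\operatorname{ann}(h_e)$ is compressed and that its central maps are bijective.

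The structural heart of the argument is the direct sum decomposition
\[
S_i \;=\; \bigoplus_{j=0}^{i}\ell^{\,j}M_{i-j}\qquad\text{for every }i\ge 0 .
\]
The two sides have the same dimension, by the telescoping identity $\sum_{j=0}^{i}\binom{n-2+i-j}{n-2}=\binom{n-1+i}{n-1}$, so only directness is at issue. Given a relation $\sum_j\ell^{\,j}g_j=0$ with $g_j\in M_{i-j}$ and not all $g_j$ zero, choose the least index $j_0$ with $g_{j_0}\ne 0$, multiply by $\ell^{\,j_0}h$ for an arbitrary $h\in M_{i-j_0}$, and apply $\circ\,h_{2i}$: by the triviality half of Proposition~\ref{prop:pairing} (with the parameter $d$ of that statement taken equal to $i$) every cross term vanishes, leaving $(\ell^{\,j_0}g_{j_0}\cdot\ell^{\,j_0}h)\circ h_{2i}=0$ for all $h\in M_{i-j_0}$; since $\ell$ is a nonzerodivisor, $\ell^{\,j_0}g_{j_0}$ is a nonzero element of $\ell^{\,j_0}M_{i-j_0}$, contradicting that the pairing on $\ell^{\,j_0}M_{i-j_0}$ is perfect. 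This is the step I expect to demand genuine care: the dimension count is automatic, and the whole force of Proposition~\ref{prop:pairing} goes into upgrading it to an honest direct sum decomposition.

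For compressedness, note first that since $S$ is a domain, $\operatorname{ann}(h_e)$ is zero in every degree $\le\lfloor e/2\rfloor$ as soon as it is zero in the middle degree $\lfloor e/2\rfloor$ (multiply a lowest-degree element by a power of $\ell$ to reach degree $\lfloor e/2\rfloor$), so, $A$ being Gorenstein, it is compressed if and only if $\operatorname{ann}(h_e)_{\lfloor e/2\rfloor}=0$. When $e=2d$ this is the non-degeneracy of the symmetric bilinear form $(f,g)\mapsto(fg)\circ h_{2d}$ on $S_d$; decomposing $S_d$ as above, Proposition~\ref{prop:pairing} shows that, in the induced basis, its Gram matrix is block diagonal with perfect diagonal blocks, hence non-degenerate. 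When $e=2d+1$, if $f\in S_d$ annihilates $h_{2d+1}$, then for every $g\in S_d$ Lemma~\ref{lemma:ell_on_hd} gives $0=(\ell fg)\circ h_{2d+1}=(n+2d)\,(fg)\circ h_{2d}$, so $f\in\operatorname{ann}(h_{2d})_d=0$ by the even case; hence $\operatorname{ann}(h_{2d+1})_d=0$.

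For the central maps, fix $e$, write $e=2d$ or $e=2d+1$, and take $0\le i\le d$. By compressedness $[A]_i=S_i$, and by Gorenstein symmetry $\dim_k[A]_{e-i}=\dim_k[A]_i=\dim_k S_i$, so under Macaulay duality $[A]_{e-i}\cong([A]_i)^{\vee}$ the map $\times\ell^{\,e-2i}\colon[A]_i\to[A]_{e-i}$ is bijective if and only if the form $(f,g)\mapsto(\ell^{\,e-2i}fg)\circ h_e$ on $S_i$ is non-degenerate. By Lemma~\ref{lemma:ell_on_hd} the element $\ell^{\,e-2i}\circ h_e$ is a nonzero scalar multiple of $h_{2i}$, so this form is a nonzero multiple of $(f,g)\mapsto(fg)\circ h_{2i}$, and its non-degeneracy on $S_i$ is exactly the statement $\operatorname{ann}(h_{2i})_i=0$ --- the even compressedness case already treated, with $2d$ replaced by $2i$ (so that Lemma~\ref{lemma_fa_on_h2d}(1),(2) together with the lemma evaluating $f_{\underline a}\circ F_{\underline b}$ exhibit the form as block diagonal with perfect blocks). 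Therefore all central maps are bijective, and $A$ satisfies the SLP.
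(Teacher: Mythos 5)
Your proof is correct and follows essentially the same route as the paper: it reduces both compressedness and the SLP to the nondegeneracy of the forms $(f,g)\mapsto (fg)\circ h_{2i}$ on $S_i$, established via the decomposition of $S_i$ into the mutually orthogonal pieces $\ell^{\,j}M_{i-j}$ from Proposition~\ref{prop:pairing}, together with Lemma~\ref{lemma:ell_on_hd}. The only (harmless) deviations are that you spell out the directness of the sum explicitly and handle the odd case by descending to $h_{2d}$ rather than ascending to $h_{2d+2}$ as the paper does.
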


\begin{proof}
  For even socle degree, $e = 2d$, we need to show that the ideal
  $\operatorname{ann}(h_{2d})$ is zero in degree $d$ in order to
  conclude that the algebra is compressed. In degree $d$, we have that
  $ M_d+ \ell M_{d-1} + \cdots + \ell^dM_0\subseteq S_d$. Using
  Proposition~\ref{prop:pairing} we see that the pairing given by
  $\langle f,g\rangle = (fg)\circ h_{2d}$ is perfect on each of the
  summands and trivial between any two of them. Since the sum of the
  dimensions of the summands equals the dimension of $S_d$, we
  conclude that the pairing is perfect on $S_d$, which implies that
  $\operatorname{ann}(h_{2d})$ is trivial in degree $d$.

  For odd socle
  degree, $e=2d+1$, it is enough to show that
  $\operatorname{ann}(h_{2d+1})$ is zero in degree $d$. If $f$ has
  degree $d$ and satisfies $f\circ h_{2d+1}=0$ we also have
  $(\ell f)\circ h_{2d+2}$ by Lemma~\ref{lemma:ell_on_hd}, but we have
  shown that $\operatorname{ann}(h_{2d+2})$ is trivial in degree $d+1$
  and therefore $\ell f = f = 0$. Hence the algebra is compressed.

  Let $A = S/\operatorname{ann}(h_{e})$. In order to prove that $A$
  has the strong Lefschetz property, it is sufficient to show that
  $\ell^{e-2i}\colon A_i\longrightarrow A_{e-i}$ is an isomorphism for
  all $i\le e/2$. Assume that $f\in S_i$ satisfies $\ell^{e-2i}f =0$
  in $A_{e-i}$. This means that $\ell^{e-2i}f \circ h_{e}=0$, which
  by Lemma~\ref{lemma:ell_on_hd} means that $f\circ h_{2i}=0$.
  However, since $A/\operatorname{ann}(h_{2i})$ is compressed, this
  means that $f=0$ and the multiplication by $\ell^{e-2i}$ is
  injective and by symmetry of the Hilbert function also bijective.
\end{proof}

\begin{thm} \label{decomp}
  For even socle degree, $e = 2d$, we have that
  $\operatorname{ann}(h_{2d}) = (M_{d+1}\oplus \ell M_d)$ and for odd
  socle degree, $e = 2d+1$, we have that
  $\operatorname{ann}(h_{2d+1}) = (M_{d+1}+\ell^2 M_d)$.
\end{thm}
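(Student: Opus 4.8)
The plan is to prove both inclusions by a dimension count, using the perfect-pairing machinery already set up. First consider the even case $e=2d$. For the inclusion $M_{d+1}\oplus \ell M_d \subseteq \operatorname{ann}(h_{2d})$ I would argue degree by degree: an element of $M_{d+1}$ has the form $f_{\underline a}$ with $|\underline a|=d+1$, so by Lemma~\ref{lemma_fa_on_h2d}(1) (since $2(d+1)>2d$) it kills $h_{2d}$; an element of $\ell M_d$ has the form $\ell f_{\underline a}$ with $|\underline a|=d$, and $(\ell f_{\underline a})\circ h_{2d} = (n+2d-1)(f_{\underline a}\circ h_{2d-1})=0$ again by Lemma~\ref{lemma_fa_on_h2d}(1) together with Lemma~\ref{lemma:ell_on_hd}. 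More generally one checks that in every degree $j$ the product of such a generator with a monomial still lies in $\bigoplus_i \ell^i M_{d+1-i} \oplus \bigoplus_i \ell^{i+1} M_{d-i}$, i.e. in the span of the $\ell^i f_{\underline b}$ with $2(d+1-i)>2d-i$ or similar, all of which annihilate $h_{2d}$; so I want a clean statement that $S = \bigoplus_{i\ge 0}\ell^i M_i$ as a graded vector space (this follows from Proposition~\ref{prop:pairing} and the compressed property, exactly as in the proof of Theorem~\ref{SLP result}), and then $\operatorname{ann}(h_{2d})$ is spanned by those summands $\ell^i M_j$ with $2j-i>2d-2i$, i.e. $j>d$ after accounting for how $\ell$ acts; making this bookkeeping precise is the first technical point.

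For the reverse inclusion in the even case I would count dimensions. From the decomposition $S_j=\bigoplus_i \ell^i (M_{j-i})$ — more carefully, $S_j = \bigoplus_{i=0}^{j}\ell^i M_{j-i}$, which holds because the sum has the right dimension $\binom{n-1+j}{n-1}=\sum_i \binom{n-2+j-i}{n-2}$ and the summands are independent by the perfect/trivial pairing of Proposition~\ref{prop:pairing} — one reads off $\dim \operatorname{ann}(h_{2d})_j$ from the known (compressed) Hilbert function of $S/\operatorname{ann}(h_{2d})$. On the other hand the ideal $(M_{d+1}\oplus \ell M_d)$ is generated in degrees $d+1$ and $d+1$ (note $\ell M_d\subseteq S_{d+1}$), and I claim its degree-$j$ piece is exactly $\bigoplus_{i}\ell^i M_{j-i}$ with the index $i$ ranging so that $j-i\ge d+1$ or ($j-i\ge d$ and at least one factor of $\ell$ is present); comparing this with the Hilbert function count shows the two graded pieces have equal dimension, and since one contains the other they are equal. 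The cleanest way to run this is probably to show directly that $S_j = \operatorname{ann}(h_{2d})_j \oplus \bigoplus_{i\le d}\ell^i M_{d-i}\cdot(\text{stuff in degree }j-d)$… i.e., that $\bigoplus_{i=0}^{d}\ell^{i}S_{d-i}\cap M_{?}$ gives a complement — I would instead phrase it as: the ideal $(M_{d+1}+\ell M_d)$ together with the subspace $\sum_{i\le d}\ell^{i}M_{d-i}$ (in the appropriate degree) spans $S$, and the latter maps isomorphically onto $A$, forcing equality of the ideal with the annihilator.

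For the odd case $e=2d+1$, the same strategy applies with the generating set $M_{d+1}+\ell^2 M_d$. The containment $M_{d+1}\subseteq \operatorname{ann}(h_{2d+1})$ is Lemma~\ref{lemma_fa_on_h2d}(1) since $2(d+1)>2d+1$. For $\ell^2 M_d$: if $|\underline a|=d$ then $(\ell^2 f_{\underline a})\circ h_{2d+1}$ is a nonzero multiple of $f_{\underline a}\circ h_{2d-1}=0$ by Lemma~\ref{lemma_fa_on_h2d}(1) and Lemma~\ref{lemma:ell_on_hd}; note that $\ell f_{\underline a}$ itself does \emph{not} annihilate $h_{2d+1}$ (it gives a multiple of $f_{\underline a}\circ h_{2d}=F_{\underline a}\ne 0$), which is exactly why the odd case needs $\ell^2$ and is "weaker." For the reverse inclusion I would again compare Hilbert functions: by the compressed property $\dim A_j=\binom{n-1+j}{n-1}$ for $j\le d$ and the symmetric value for $j>d$, so $\dim\operatorname{ann}(h_{2d+1})_{d+1}=\binom{n-1+d+1}{n-1}-\binom{n-1+d}{n-1}=\dim M_{d+1}$, meaning $\operatorname{ann}(h_{2d+1})$ has no "extra" generators in degree $d+1$ beyond $M_{d+1}$ and $\ell M_d\not\subseteq\operatorname{ann}$; then in each higher degree one checks that $(M_{d+1}+\ell^2 M_d)_j$ already exhausts the dimension predicted by the Hilbert function, using the decomposition $S_j=\bigoplus_i \ell^i M_{j-i}$ to identify which summands lie in the ideal. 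The main obstacle, in both parities, is the bookkeeping that turns "the ideal generated by $M_{d+1}$ (and $\ell M_d$ or $\ell^2 M_d$)" into an explicit direct sum of the $\ell^i M_{j-i}$ summands in each degree $j$ — i.e., showing the generators propagate to span exactly the expected summands and no more; once that combinatorial identification is in place, the dimension count via the compressed Hilbert function closes both inclusions simultaneously.
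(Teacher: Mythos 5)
Your forward inclusions are fine, and in fact easier than you make them: since $\operatorname{ann}(h_e)$ is an ideal, it suffices that the proposed generators annihilate $h_e$, which Lemmas~\ref{lemma:ell_on_hd} and \ref{lemma_fa_on_h2d}(1) give exactly as you say; no analysis of products with monomials is needed for that direction. The genuine gap is in the reverse inclusions, and it is precisely the step you flag as ``bookkeeping'' and defer. The decomposition $S_j=\bigoplus_i \ell^i M_{j-i}$ does hold (your dimension-plus-pairing argument for it is correct), but $\Phi$ is not a ring homomorphism, so the spaces $M_j$ are not multiplicatively compatible with this decomposition: for $m\in M_{d+1}$ and a variable $x_k$, if you write $x_k m=\sum_i \ell^i m_i$ with $m_i\in M_{d+2-i}$, there is no a priori reason that the components with $d+2-i\le d$ vanish. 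Consequently your ``claim'' identifying $(M_{d+1}\oplus \ell M_d)_j$ with an explicit union of summands $\ell^i M_{j-i}$ is exactly what has to be proven, and without it the Hilbert function comparison cannot be closed in any degree $j>d+1$.

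The paper sidesteps this entirely. For $e=2d$ it uses the structural fact that a compressed Gorenstein algebra of even socle degree $2d$ has its defining ideal minimally generated in the single degree $d+1$ (this is where compressedness does real work beyond supplying the Hilbert function); one then only needs to check, in degree $d+1$ alone, that $M_{d+1}\cap \ell M_d=0$ (via the pairing $\langle f,g\rangle=(fg)\circ h_{2d+2}$ and Proposition~\ref{prop:pairing}) and that $\dim M_{d+1}+\dim \ell M_d=\binom{n+d-1}{n-2}+\binom{n+d-2}{n-2}$ equals the number of generators. No higher-degree bookkeeping occurs. For $e=2d+1$ the generators sit in degrees $d+1$ and $d+2$; degree $d+1$ is the dimension count you give, but degree $d+2$ requires a reduction you are missing: write $g=\ell f_1+f_2$ using $S_{d+1}=M_{d+1}\oplus(\ell)_{d+1}$ with $f_2\in (M_{d+1})_{d+2}$, deduce $f_1\circ h_{2d}=0$ from $g\circ h_{2d+1}=0$, and apply the already-proven even case to get $f_1\in M_{d+1}\oplus \ell M_d$, hence $g\in (M_{d+1})+\ell^2 M_d$. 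To complete your proof you would need either to supply the missing multiplicative identification of the graded pieces of the ideal, or to adopt these two devices; as written the argument is not complete.
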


\begin{proof}
  Since the algebra $A =S/\operatorname{ann}(h_{2d})$ is compressed
  of even socle degree we have generators of
  $\operatorname{ann}(h_{2d})$ only in degree $d+1$ and the number of
  generators is given by
  \[\binom{n+d}{n-1}-\binom{n+d-2}{n-1} =
  \binom{n+d-1}{n-2}+\binom{n+d-2}{n-2}.\]
  Both $M_{d+1}$ and $\ell M_d$ annihilate $h_{2d}$ and their
  intersection is trivial as we can see using the pairing given by
  $\langle f,g\rangle = (fg)\circ h_{2d+2}$.  Indeed, from
  Proposition~\ref{prop:pairing} we have that for a non-zero element
  $f$ in $M_{d+1}$, there is an element $g\in M_{d+1}$ with $\langle
    f,g\rangle \ne 0$, while if $f\in \ell M_d$, we have to have
    $\langle f,g\rangle =0$ from the same proposition.

  Moreover,
  $\dim_k M_{d+1} = \binom{n+d-1}{n-2}$ and
  $\dim_k \ell M_{d} = \binom{n+d-2}{n-2}$, which proves that
  $\operatorname{ann}(h_{2d}) = (M_{d+1}\oplus \ell M_d)$.

  Since the algebra $A =S/\operatorname{ann}(h_{2d+1})$ is compressed
  of odd socle degree and we can have generators in degree $d+1$ and
  $d+2$. The dimension of $\operatorname{ann}(h_{2d+1})$ in degree
  $d+1$ is given by
  $\binom{n+d}{n-1}-\binom{n+d-1}{n-1} = \binom{n+d-1}{n-2}$ which
  equals $\dim_k M_{d+1}$. Since $M_{d+1}$ annihilates $h_{2d+1}$ we
  must have that $\operatorname{ann}(h_{2d+1})$ in degree $d+1$ equals
  $M_{d+1}$. Let $g\in S_{d+2}$ be an element of
  $\operatorname{ann}(h_{2d+1})$. Since we have that
  $M_{d+1}\oplus (\ell)_{d+1} = S_{d+1}$ we must have that
  $g = \ell f_1 + f_2 $ for some $f_1\in S_{d+1}$ and $f_2$ in
  $(M_{d+1})_{d+2}$. Now $g\circ h_{2d+1} = 0$ implies that
  $f_1\circ h_{2d} =0$. From above it follows that
  $\operatorname{ann}(h_{2d})_{d+1} = M_{d+1}\oplus \ell M_d$ so
  $f_1 \in \ell M_d+ M_{d+1}$. Thus we see that
  $g\in \ell^2 M_d + (M_{d+1})$, which concludes the proof.
\end{proof}


\section{A power sum decomposition for the complete symmetric
  polynomial} \label{ps decomp section}

In this section we will give a decomposition of the complete symmetric
polynomial as a sum of powers of linear forms. The number of terms in
this decomposition is in general much lower than the Waring rank of a
general polynomial of the same degree (see Remark~\ref{rmk:bounds}) and it seems likely that our
decomposition is a Waring decomposition when the number of
variables is higher than the degree and the degree is odd. We will prove this in the case of
degree three.

\begin{lemma}\label{lem:diffrestrict}
  For a polynomial $f\in \mathbb Q[X_1,X_2,\dots,X_n]$ we have that
  \[
  f = 0 \qquad \Longleftrightarrow \qquad
  \begin{cases}
    f|_{X_n=0} = 0, \text{ and} \\
    (x_1+x_2+\dots+x_n)\circ f = 0.
  \end{cases}
  \]
\end{lemma}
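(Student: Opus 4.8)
The plan is to prove both implications of the equivalence. The direction ($\Rightarrow$) is immediate: if $f = 0$ then of course $f|_{X_n = 0} = 0$ and $(x_1 + \dots + x_n)\circ f = 0$. So the content is in the reverse direction ($\Leftarrow$), which I would prove by induction on $\deg f$. The base case $\deg f = 0$ is trivial, since a nonzero constant does not restrict to zero at $X_n = 0$.

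For the inductive step, suppose $\deg f = d \geq 1$ and write $f = \sum_{j=0}^d g_j(X_1, \dots, X_{n-1}) X_n^j$, grouping by powers of $X_n$; the hypothesis $f|_{X_n = 0} = 0$ says exactly $g_0 = 0$. I would then compute $\ell \circ f$ where $\ell = x_1 + \dots + x_n$. The operator $x_n = \partial/\partial X_n$ lowers the $X_n$-degree and introduces factors of $j$, while $x_1 + \dots + x_{n-1}$ acts only on the $g_j$; collecting the coefficient of $X_n^j$ in $\ell \circ f$ gives an expression of the form $(x_1 + \dots + x_{n-1})\circ g_j + (j+1) g_{j+1}$. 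Since $\ell \circ f = 0$, each such coefficient vanishes, giving the recursion $(j+1) g_{j+1} = -(x_1 + \dots + x_{n-1})\circ g_j$ for all $j \geq 0$. Because we are in characteristic zero, the factor $j+1$ is invertible, so every $g_{j+1}$ is determined by $g_j$; starting from $g_0 = 0$ we get $g_1 = 0$, then $g_2 = 0$, and inductively all $g_j = 0$, hence $f = 0$.

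The main (and essentially only) subtlety is the bookkeeping in the coefficient-of-$X_n^j$ computation for $\ell \circ f$ — making sure the cross terms line up so that the recursion is genuinely triangular in the $X_n$-degree. Nothing here needs the inductive hypothesis on degree after all; the argument is really a direct induction on the power $j$ of $X_n$, which is cleaner. The characteristic-zero hypothesis is used exactly once, to divide by $j+1$, and this is where the statement would fail over a field of positive characteristic (e.g.\ $f = X_n^p$ in characteristic $p$).
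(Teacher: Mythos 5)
Your proof is correct and rests on the same mechanism as the paper's: isolating the $X_n$-dependence and using that applying $\ell$ shifts the $X_n$-exponent with a nonzero integer coefficient, which is invertible in characteristic zero. The paper packages this as a one-step contradiction (write $f=X_n^d g$ with $X_n\nmid g$ and $d>0$ maximal, apply $\ell$, and read off $d\,g|_{X_n=0}=0$), whereas you run the equivalent induction on the exponent $j$; both are fine.
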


\begin{proof}
   Assume that
  $f|_{X_n=0}=0$ and $\ell \circ f=0$, where
  $\ell = x_1+x_2+\cdots+x_n$. Then $f = 0$ or $f = X_n^d g$, where
  $g|_{X_n=0}\ne 0$ and $d>0$. Applying $\ell\circ f =0$ we get
  \[
  dX_n^{d-1}g + X_n^d \ell\circ g = 0
  \]
  and division by $X_n^{d-1}$ shows that $g|_{X_n=0} = 0$. Hence
  $f=0$. The other direction is trivial.
\end{proof}

In the following we will use the notation $h_{n,d}$ for the complete symmetric
polynomial of degree $d$ in the variables $X_1,X_2,\dots,X_n$.

\begin{thm}\label{thm:powersums} For $n\ge 1$ and $d\ge 0$ the following power sum
  decompositions hold for the complete symmetric functions
  \[
  h_{n,2d+1}=\frac{1}{2^{2d}(2d+1)!}\sum_{k=0}^d
  (-1)^{d-k}\binom{n+2d}{d-k} \sum_{1\le i_1\le i_2\le\cdots\le
    i_{k}\le n} \left(h_{n,1} + 2 \sum_{j=1}^{k} X_{i_j}\right)^{2d+1}
  \]
  and
  \[h_{n,2d}=\frac{1}{2^{2d}(2d)!}\sum_{k=0}^d(-1)^{d-k}\left[\binom{n+2d-1}{d-k}-\binom{n+2d-1}{d-k-1}\right]\sum_{1\le
    i_1\le i_2\le\cdots\le i_{k}\le n} \left(h_{n,1} + 2 \sum_{j=1}^{k}
    X_{i_j}\right)^{2d}
\]
where in both cases the second summation contains just one term
$h_{n,1}^{2d+1}$ and $h_{n,1}^{2d}$, respectively, for $k=0$.
\end{thm}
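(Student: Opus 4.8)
The plan is to verify both identities by a single generating-function manipulation in the power series ring $k[[X_1,\dots,X_n]]$, exploiting the fact that expanding $\bigl(h_{n,1} + 2\sum_{j=1}^k X_{i_j}\bigr)^m$ and summing over $1\le i_1\le\cdots\le i_k\le n$ produces, after collecting terms, a symmetric-function expression that I can recognize. First I would introduce the linear form $L=h_{n,1}=X_1+\dots+X_n$ and, for a multiset $S=\{i_1,\dots,i_k\}$, write $L_S = L + 2\sum_{j}X_{i_j}$. Summing $L_S^m/m!$ over all such multisets and over all $m$ (i.e.\ forming $\sum_{m\ge 0} (\text{stuff})_m$) converts the outer ``$k$-th symmetric power of the variables'' structure into a product: $\sum_{k\ge 0}\sum_{S,\,|S|=k} t^k \exp(L_S)$ should factor, since choosing a multiset of size $k$ from $\{1,\dots,n\}$ with repetition corresponds to the coefficient extraction in $\prod_{i=1}^n (1-tz_i)^{-1}$-type generating functions. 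Concretely I expect $\sum_{S}\exp(L_S)\,t^{|S|} = e^{L}\prod_{i=1}^n \frac{1}{1 - t e^{2X_i}}$, and then the whole right-hand side of each claimed identity becomes a single explicit rational-in-$t$ times exponential expression, with the binomial coefficients $\binom{n+2d}{d-k}$ (resp.\ the difference of binomials) packaging the coefficients of a known series in $t$.

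Next I would identify those binomial sums as coefficients coming from $(1-t)^{-(n+1)}$ or a closely related series: the appearance of $\binom{n+2d}{d-k}$ strongly suggests that $\sum_k (-1)^{d-k}\binom{n+2d}{d-k}t^{?}$ is the degree-$d$ truncation of $(1+\text{something})^{-(n+?)}$ evaluated appropriately, and similarly $\binom{n+2d-1}{d-k}-\binom{n+2d-1}{d-k-1}$ is the ballot-type coefficient that arises from an extra factor of $(1-t)$ (this is exactly the kind of correction that distinguishes the even case from the odd case, matching the remark in the introduction that the even result needs a slightly different statement). After substituting, the identity to prove reduces to an equality of two explicit power series in $t$ whose $t^0$-free structure is governed by $e^{L}$, $e^{2X_i}$, and rational factors; since we only want the degree-$(2d{+}1)$ (resp.\ degree-$2d$) homogeneous part in the $X$'s, I would extract that graded piece and check it equals $h_{n,2d+1}$ (resp.\ $h_{n,2d}$), using that $h_{n,m}$ is the degree-$m$ part of $\prod_{i=1}^n (1-X_i)^{-1}$.

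An alternative, cleaner route — and the one I would actually write up if the bookkeeping above gets heavy — is to use Lemma~\ref{lem:diffrestrict} for an induction on $n$. Both sides of each identity are symmetric polynomials; setting $X_n=0$ turns $h_{n,d}$ into $h_{n-1,d}$ and turns the claimed decomposition for $n$ variables into (after reindexing the multisets according to whether $i_j=n$ occurs, and how many times) a telescoping combination of the $(n-1)$-variable decompositions, where the Pascal-type recursion $\binom{n+2d}{d-k}=\binom{n+2d-1}{d-k}+\binom{n+2d-1}{d-k-1}$ is exactly what makes the coefficients line up. Then one checks $\ell\circ(\text{LHS}-\text{RHS})=0$: on the left $\ell\circ h_{n,2d+1}=(n+2d)h_{n,2d}$ by Lemma~\ref{lemma:ell_on_hd}, and on the right $\ell\circ \bigl(h_{n,1}+2\sum X_{i_j}\bigr)^{m} = m(n+2k)\bigl(h_{n,1}+2\sum X_{i_j}\bigr)^{m-1}$ since $\ell$ is the sum of all $\partial/\partial X_i$ and the linear form has coefficient $1$ in $n-k$ variables and $3$... wait, coefficient $1$ or $3$; in any case $\ell$ applied gives $(n+2k)$ times the form, so the degree drops by one and the identity becomes the lower-degree case, closing the induction. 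The base cases $d=0$ (both identities say $h_{n,0}=1$ or $h_{n,1}=h_{n,1}$) and $n=1$ (where $h_{1,m}=X_1^m$ and the only multisets are $\{1,\dots,1\}$, so the claim is a classical one-variable power-sum identity for $X_1^m$ that reduces to a binomial coefficient identity) are elementary.

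The main obstacle will be the combinatorial reindexing in the induction step — tracking how a multiset $1\le i_1\le\cdots\le i_k\le n$ splits according to the multiplicity of the index $n$, and verifying that the resulting rearrangement of the coefficient $\binom{n+2d}{d-k}$ (resp.\ the difference of binomials) against the $(n-1)$-variable coefficients is exactly the Pascal/ballot recursion, with no leftover terms. The $\ell\circ(\cdot)=0$ half is then routine given Lemmas~\ref{lemma:ell_on_hd} and~\ref{lem:diffrestrict}; the delicate point is purely the bookkeeping of which binomial identity is needed, and getting the normalizing constant $1/(2^{2d}(2d+1)!)$ (resp.\ $1/(2^{2d}(2d)!)$) to survive the induction — the factor $2^{2d}$ comes from the $2\sum X_{i_j}$ scaling and the factorial from passing from exponentials/powers back to $h$'s.
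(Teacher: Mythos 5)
Your second route is the paper's own strategy: induct on $n$ and $d$, and use Lemma~\ref{lem:diffrestrict} to reduce the identity to checking the restriction to $X_n=0$ and the image under $\ell\circ(\cdot)$. The restriction half works as you describe (the multiset splits by the multiplicity of the index $n$, giving $S_{n,k,m}|_{X_n=0}=\sum_{i=0}^k S_{n-1,i,m}$, and the alternating partial sums of $\binom{n+2d}{j}$ telescope to $\binom{n+2d-1}{d-i}$), and the odd-to-even derivative step also works, since $(n+2k)\binom{n+2d}{d-k}=(n+2d)\bigl[\binom{n+2d-1}{d-k}-\binom{n+2d-1}{d-k-1}\bigr]$ turns $\ell$ applied to the odd identity into $(n+2d)$ times the even identity of the same $d$.

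The genuine gap is in the even-to-odd step, where you assert that applying $\ell$ ``becomes the lower-degree case, closing the induction.'' It does not: $\ell$ applied to the even right-hand side produces $\sum_{k=0}^d(-1)^{d-k}\bigl[\binom{n+2d-1}{d-k}-\binom{n+2d-1}{d-k-1}\bigr]\,2d\,(n+2k)\,S_{n,k,2d-1}$, whose $k=d$ term has nonzero coefficient $2d(n+2d)$, whereas the target (the odd identity for degree $2d-1$) runs only over $k\le d-1$ with the \emph{different} upper index $\binom{n+2d-2}{d-1-k}$. The coefficients do not line up term by term. The paper's fix is to strengthen the induction: it introduces the sums $A_{n,d,m}$ and $B_{n,d,m}$ in which the power $2m+1$ (resp.\ $2m$) is decoupled from the binomial parameter $d$, and proves additionally that these vanish for all $m<d$. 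Only after subtracting the (inductively zero) quantity $2m(n+2d)A_{n,d,m-1}$ from $\ell\circ B_{n,d,m}$ does the $k=d$ term cancel, the factor $(n+2k)^2-(n+2d)^2=(2k-2d)(2n+2k+2d)$ appear, and the binomial upper index drop from $n+2d$ to $n+2d-2$, yielding a multiple of $A_{n,d-1,m-1}$. Without this auxiliary vanishing statement (and its own base cases at $m=0$ and $n=1$, the latter being a nontrivial finite-difference identity rather than a routine check), your induction does not close. Your first, generating-function route is a genuinely different idea and the factorization $\sum_S e^{L_S}t^{|S|}=e^{L}\prod_i(1-te^{2X_i})^{-1}$ is correct, but as written it is only a plan: the identification of the binomial weights with coefficients of an explicit series in $t$, which is where the whole content lies, is not carried out.
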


\begin{proof}
  We will prove this by induction on $n$ and $d$ and we introduce the
  notation
  \[
  S_{n,k,d}=\sum_{1\le i_1\le i_2\le\cdots\le i_{k}\le n}
  \left(h_{n,1}+2\sum_{j=1}^{k}X_{i_j}\right)^{d}
  \]
  as an expression valid in any polynomial ring
  $\mathbb Q[X_1,X_2,\dots,X_N]$ with $N\ge n$.  Furthermore, for
  $m\ge 0$, we introduce the notation
  \[
  \begin{split}
    A_{n,d,m}&=\sum_{k=0}^d (-1)^{d-k}\binom{n+2d}{d-k} S_{n,k,2m+1}\\
    B_{n,d,m}&=\sum_{k=0}^d(-1)^{d-k}\left[\binom{n+2d-1}{d-k}-\binom{n+2d-1}{d-k-1}\right]S_{n,k,2m}
  \end{split}
  \]
  for $n\ge 1$, $d\ge 0$ and $m\ge 0$. We shall prove by induction
  also on $m\le d$ that
  \begin{equation}
    A_{n,d,m}=\begin{cases}2^{2d}(2d+1)! h_{n,2d+1},& m=d,\\0,&m<d\end{cases}
    \quad\text{and}\quad
    B_{n,d,m}=\begin{cases}2^{2d}(2d)!h_{n,2d},&
      m=d,\\0,&m<d.\end{cases}\label{eq:1}
  \end{equation}
  The statement of the theorem is (\ref{eq:1}) in the case $m=d$.

  The base of the induction is given by the cases $d=0$, $n=1$ and
  $m=0$.  For $d=0$ we have that $m=0$ and we get
  \[
  A_{n,0,0} = S_{n,0,1} = \sum_{i=1}^n X_i = 2^01!
  h_{n,1}\quad\text{and}\quad B_{n,0,0} = S_{n,0,0} = 1 = 2^00!
  h_{n,0}
  \]
  so the assertions hold for $d=0$. For $n=1$ we have that
  $S_{1,k,d} = (1+2k)^dX_1^d$ and we get
  \[
  \begin{split}
    A_{1,d,m} &= \sum_{k=0}^d (-1)^{d-k} \binom{1+2d}{d-k}
    (1+2k)^{2m+1}X_1^{2m+1} = \sum_{k=0}^d (-1)^{k} \binom{1+2d}{k}
    (1+2(d-k))^{2m+1}X_1^{2m+1} \\&= \frac12 \sum_{k=0}^{2d+1}
    (-1)^{k} \binom{1+2d}{k} (1+2d-2k)^{2m+1} X_1^{2m+1}=
    \begin{cases}
      2^{2d}(2d+1)!X_1^{2d+1}, & m=d,\\0,&0\le m<d
    \end{cases}
  \end{split}
  \]
  and
  \[
  B_{1,d,m} = \frac{1}{(2d+1)(2m+1)} x_1\circ A_{1,d,m} =
  \begin{cases}
    2^{2d}(2d)!X_1^{2d}, & m=d,\\0,&0\le m<d
  \end{cases}
  \]
  according to the previous equality. Thus both equalities hold for
  $n=1$.

  For $m=0$ we have that
  \[
  \begin{split}
    A_{n,d,0} &= \sum_{k=0}^d (-1)^{d-k} \binom{n+2d}{d-k} S_{n,k,1}
    =h_{n,1}\sum_{k=0}^d (-1)^{d-k}
    \binom{n+2d}{d-k}\binom{n-1+k}{k}\frac{n+2k}{n} \\&
    =h_{n,1}\sum_{k=0}^d (-1)^{d-k} \binom{n+2d}{d-k}\left[\binom{n+k}{k}
      + \binom{n+k-1}{k-1}\right]
  \end{split}
  \]
  and the coefficient of $h_{n,1}$ equals the coefficient of $t^d$ in the
  formal power series
  \[
  (1-t)^{n+2d}\left[\frac{1}{(1-t)^{n+1}}+\frac{t}{(1-t)^{n+1}}\right]
  = (1-t)^{2d-1}+t(1-t)^{2d-1}
  \]
  which is equal to
  \[
  (-1)^d \left[\binom{2d-1}{d} - \binom{2d-1}{d-1}\right] =
  \begin{cases} 1,&d=0,\\0,&d>0.\end{cases}
  \]

  For $B_{n,d,0}$ we get
  \[
  B_{n,d,0} = \sum_{k=0}^d (-1)^{d-k} \left[\binom{n+2d-1}{d-k} -
    \binom{n+2d-1}{d-1-k}\right]\binom{n-1+k}{k}
  \]
  which is the coefficient of $t^d$ in the formal power series
  \[
  \left[(1+t)^{n+2d-1} -t(1+t)^{n+2d-1}\right]\cdot \frac1{(1+t)^n} =
  (1+t)^{2d-1}-t(1+t)^{2d-1}
  \]
  which equals
  \[
  \binom{2d-1}{d}-\binom{2d-1}{d-1} = \begin{cases} 1,&d=0,\\0,&d>0.
  \end{cases}
  \]

  Now we proceed to the induction step. By
  Lemma~\ref{lem:diffrestrict} it is sufficient to show that the
  equalities in (\ref{eq:1}) hold after differentiation by $\ell=x_1+x_2+\dots+x_n$
  and after restriction to $X_n=0$.

  We have that
  \[
  \begin{split}
    \ell\circ A_{n,d,m} &= (2m+1)\sum_{k=0}^d
    (-1)^{d-k}\binom{n+2d}{d-k} (n+2k)S_{n,k,2m} = (2m+1)(n+2d)
    B_{n,d,m} \\&=
    \begin{cases}
      2^{2d}(2d+1)!(n+2d)h_{n,2d},& m =d\\
      0,&m<d
    \end{cases}
  \end{split}
  \]
  where we have to assume by induction that the equality holds for
  $B_{n,d,m}$.  We have that
  \[
  \ell\circ B_{n,d,m} = \frac{1}{(2m+1)(n+2d)} \ell^2\circ A_{n,d,m} =
  \frac{2m}{n+2d}\sum_{k=0}^d
  (-1)^{d-k}\binom{n+2d}{d-k}(n+2k)^2S_{n,k,2m-1}.
  \]
  We subtract $2m(n+2d) A_{n,d,m-1}$, which by induction on $m$ is
  zero, from this and get
  \[
  \begin{split}
    \ell\circ B_{n,d,m} &= \frac{2m}{n+2d}\sum_{k=0}^{d-1}
    (-1)^{d-k}\binom{n+2d}{d-k}\left[(n+2k)^2-(n+2d)^2\right]S_{n,k,2m-1}
    \\&= \frac{2m}{n+2d}\sum_{k=0}^{d-1}
    (-1)^{d-k}\binom{n+2d}{d-k}(2k-2d)(2n+2k+2d)S_{n,k,2m-1} \\&=
    8m(n+2d-1)\sum_{k=0}^{d-1}
    (-1)^{d-1-k}\binom{n+2d-2}{d-1-k}S_{n,k,2m-1} \\&= 8m(n+2d-1)
    A_{n,d-1,m-1} =
    \begin{cases}
      8d(n+2d-1)2^{2d-2}(2d-1)! h_{n,2d-1}&m=d\\
      0&m<d
    \end{cases}
    \\&=
    \begin{cases}
      (n+2d-1)2^{2d}(2d)! h_{n,2d-1}&m=d\\
      0&m<d
    \end{cases}
  \end{split}
  \]
  which agrees with the derivative of the right-hand side of (\ref{eq:1}).

  Next, we restrict to $X_n=0$ where we have
  $S_{n,k,d}|_{X_n=0} = \sum_{i=0}^k S_{n-1,i,d}$. Hence
  \[
  \begin{split}
    A_{n,d,m}|_{X_n=0} &= \sum_{k=0}^d (-1)^{d-k}\sum_{i=0}^k
    \binom{n+2d}{d-k}S_{n-1,i,2m+1} = \\&= \sum_{i=0}^d
    \sum_{j=0}^{d-i} (-1)^{j} \binom{n+2d}{j}S_{n-1,i,2m+1} =
    \sum_{i=0}^d (-1)^{d-i} \binom{n-1+2d}{d-i}S_{n-1,i,2m+1} \\&=
    A_{n-1,d,m}
  \end{split}
  \]
  and
  \[
  \begin{split}
    B_{n,d,m}|_{X_n=0} &= \sum_{k=0}^d (-1)^{d-k}\sum_{i=0}^k
    \left[\binom{n+2d-1}{d-k}-\binom{n+2d-1}{d-1-k}\right]S_{n-1,i,2m}
    \\&= \sum_{i=0}^d \sum_{j=0}^{d-i} (-1)^{j}
    \left[\binom{n+2d-1}{j}-\binom{n+2d-1}{j-1}\right]S_{n-1,i,2m}
    \\&= \sum_{i=0}^d \sum_{j=0}^{d-i} (-1)^{d-i}
    \left[\binom{n+2d-1}{d-i}-\binom{n+2d-1}{d-i-1}\right]S_{n-1,i,2m}
    = B_{n-1,d,m}.
  \end{split}
  \]
\end{proof}

\begin{rmk}\label{rmk:bounds} For either even degree $2d$ or odd degree $2d+1$, our power sum
  decomposition in Theorem~\ref{thm:powersums} has $\binom{n+d}{d}$ terms. This gives an upper bound for the Waring rank of the complete
  symmetric forms. 
  According to the
  Alexander--Hirschowitz Theorem, except in a few well-understood
  cases~\cite{AH,I}, general forms of degree $2d+1$ have Waring rank 
  $\lceil\frac1n\binom{n+2d}{2d+1}\rceil$ and general forms of degree
  $2d$ have Waring rank $\lceil\frac1n\binom{n+2d-1}{2d}\rceil$.
  Observe
  that our bound
  has degree $d$ as a polynomial in $n$ while the Waring rank for
  general forms grows like  a polynomial of degree $2d$ or $2d-1$ in
  $n$. We believe that our bound is sharp for odd degree (i.e. that we
  are actually giving a Waring decomposition), and will show it for
  $d=1$ (i.e. degree 3) in the next section. For even degree,
  experimentally we have seen that it is not necessarily sharp.

  In fact, for all $n\ne 14$, we can find coefficients
  $\alpha_0,\alpha_1,\alpha_2,\alpha_3,\alpha_4$ so that
  \[
  h_{n,4} = \alpha_0 h_1^4 + \sum_{i=1}^{n}
  (\alpha_1 h_1+\alpha_2 X_i)^4
  + \sum_{1\le i < j \le n} (\alpha_3 h_1 + \alpha_4 X_i+\alpha_4
  X_j)^4.
  \]
  This expression has $\binom{n+2}{2}-n$ terms which is $n$ fewer
  terms than the expression from Theorem.~\ref{thm:powersums}.
  In particular, for $n=13$ we can get explicit values for the
  coefficients and we can write
  \[
    h_{13,4} = -\frac{2}{3} h_1^4 + \frac{1}{24}\sum_{i=1}^{13}
    (h_1+X_i)^4  + \frac{1}{24}\sum_{1\le i < j \le 13} (X_i+X_j)^4.
  \]
  We will look more into this at the end of Section~\ref{section:genWR}.
\end{rmk}


\section{Symmetric cubic polynomials}
  \label{cubic sect}

In this section we deeply analyze the case of symmetric cubic
polynomials. We begin by considering the Waring rank. In the case of
degree three, Theorem~\ref{thm:powersums} applied with $d=1$ states that
\[
\displaystyle h_3 = \frac1{24} \left ( \sum_{i=1}^n (h_1 + 2X_i)^3
  -(n+2) h_1^3\right)
\]
 and we will show that this is indeed a Waring decomposition, not just
for this particular symmetric cubic, but that a general symmetric cubic has
a similar Waring decomposition.

We will study the artinian Gorenstein algebras defined by the
annihilators of symmetric cubic forms in $n \ge 3$ variables. We will
determine the possible Hilbert function of the annihilator, the Waring
rank, the cactus rank, and the resolution of all such Gorenstein
algebras. We will also show that they all satisfy the SLP and that
there are three linear forms that are sufficient to provide Lefschetz
elements for them all.

We will parameterize the symmetric forms of degree three by the projective
plane using the basis $\{p_1^3,np_1p_2,n^2p_3\}$, where $p_i =
\sum_{j=1}^n X_j^i$ are the power sum symmetric forms in $E =
k[X_1,X_2,\dots,X_n]$. Let $k[a_0,a_1,a_2]$ be the coordinate ring of
this plane corresponding to the given basis.

We start by defining a rational cubic curve as a set of symmetric cubics whose Waring rank
is at most $n$.  For $(\alpha\colon\beta)$ in $\mathbb P^1$,  define (up to a scalar multiple)
a symmetric cubic
\begin{align}
  \label{eq:parametrization}
F(\alpha,\beta) & = \sum_{i=1}^n (\alpha n X_i+\beta p_1)^3  \\
& =\alpha^3n^3p_3+
3\alpha^2\beta n^2p_1p_2 + (3\alpha\beta^2+\beta^3)np_1^3. \nonumber
\end{align}
Furthermore, let $\mathcal C$ be the rational cubic curve defined as the image of
the map
\[
\gamma\colon \mathbb P^1 \to \mathbb P^2 =
\mathbb P(\langle p_1^3, np_1p_2,n^2p_3\rangle), \ (\alpha\colon\beta)
\mapsto (3\alpha\beta^2+\beta^3 : 3\alpha^2\beta : \alpha^3).
\]
(Note the coefficients  $n$ and $n^2$ on the basis.) That means, the image of $(\alpha\colon\beta)$ corresponds to the cubic $F (\alpha, \beta)$.

We also will use the above coordinates to parameterize any symmetric cubic  in $n$ variables.
In fact, since the power sums whose degrees are at most $n$ generate the ring of symmetric polynomials, every symmetric cubic can be written (over a field of characteristic zero) as $F = a_0p_1^3+a_1np_1p_2+a_2n^2p_3$. We refer to $F$ as the cubic at the point $(a_0 \colon a_1 \colon a_2)$.

\begin{notation} We fix three lines in $\mathbb P^2$ which will play an important role in this section:

$\ell _1$ is the line defined by $a_1+3a_2=0$,

$\ell _2$ is the line defined by $a_2=0$, and

 $\ell _3$ is the line defined by $a_0 + a_1 + a_2 = 0$.
\end{notation}

\begin{lemma}\label{lemma:curve}
  The cubic curve $\mathcal C$ is a cuspidal curve given by the
  equation
\[
    27 a_0a_2^2  - 9a_1^2a_2-a_1^3 = 0.
  \]
  Its cusp is $\mathcal P = (1:0:0)$ and its unique flex point is
  $\mathcal Q = (2:-3:1)$. The line  through $\mathcal P$ and  $\mathcal Q$ is the line $\ell _1$.
  Moreover, $\ell _2$ is the unique line meeting  $\mathcal C$ at $\mathcal P$ with multiplicity three.
\end{lemma}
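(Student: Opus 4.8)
The plan is to verify each assertion by direct computation using the explicit parametrization $\gamma\colon(\alpha\colon\beta)\mapsto(3\alpha\beta^2+\beta^3 : 3\alpha^2\beta : \alpha^3)$, together with elementary elimination and differential-geometric facts about plane cubics. First I would check that the image of $\gamma$ satisfies $27a_0a_2^2 - 9a_1^2a_2 - a_1^3 = 0$: setting $a_0 = 3\alpha\beta^2+\beta^3$, $a_1 = 3\alpha^2\beta$, $a_2 = \alpha^3$, one substitutes and expands, obtaining $27(3\alpha\beta^2+\beta^3)\alpha^6 - 9\cdot 9\alpha^4\beta^2\cdot\alpha^3 - 27\alpha^6\beta^3 = 81\alpha^7\beta^2 + 27\alpha^6\beta^3 - 81\alpha^7\beta^2 - 27\alpha^6\beta^3 = 0$. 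That the cubic form $27a_0a_2^2 - 9a_1^2a_2 - a_1^3$ is irreducible (hence $\mathcal C$ is exactly this curve, not a proper component) follows since it is linear in $a_0$ with coefficient $27a_2^2$, so any factorization would force a linear factor dividing both $27a_2^2$ and $9a_1^2a_2 + a_1^3$, which is impossible.

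Next I would identify the singular point by computing the partial derivatives of $G := 27a_0a_2^2 - 9a_1^2a_2 - a_1^3$: we get $G_{a_0} = 27a_2^2$, $G_{a_1} = -18a_1a_2 - 3a_1^2$, $G_{a_2} = 54a_0a_2 - 9a_1^2$, which vanish simultaneously precisely at $(1:0:0) = \mathcal P$; a local analysis near $\mathcal P$ (set $a_0 = 1$) shows the lowest-order terms of $G$ are $27a_2^2 - a_1^3$ after clearing, exhibiting an ordinary cusp, and that its tangent cone is $a_2^2 = 0$, i.e. the line $\ell_2\colon a_2 = 0$; this line meets $\mathcal C$ only at $\mathcal P$ with multiplicity three, since substituting $a_2 = 0$ into $G$ gives $-a_1^3 = 0$. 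For the flex, I would use the parametrization: a point $\gamma(t)$ (with $t = \alpha/\beta$ or the reverse on the affine chart) is a flex iff $\gamma$, $\gamma'$, $\gamma''$ are linearly dependent, i.e. the $3\times 3$ Wronskian-type determinant vanishes; computing this determinant in the parameter gives a single value (after accounting for the cusp parameter), and evaluating $\gamma$ there yields $\mathcal Q = (2:-3:1)$ — alternatively one checks directly that the Hessian of $G$ meets $\mathcal C$ only at $\mathcal P$ and $\mathcal Q$, with $\mathcal P$ absorbed by the cusp, leaving $\mathcal Q$ as the unique genuine flex. Finally, the line through $\mathcal P = (1:0:0)$ and $\mathcal Q = (2:-3:1)$ consists of points $(a_0:a_1:a_2)$ with $\det\begin{pmatrix}1 & 0 & 0\\ 2 & -3 & 1 \\ a_0 & a_1 & a_2\end{pmatrix} = -3a_2 - a_1 = 0$, i.e. $a_1 + 3a_2 = 0$, which is $\ell_1$.

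The routine-but-only-mildly-delicate step is the flex computation; everything else is immediate substitution. The one conceptual point worth stating carefully is why the local form $27a_2^2 - a_1^3$ at $\mathcal P$ genuinely gives a \emph{cusp} with tangent cone $\ell_2$ rather than some other singularity type: this is because after the affine normalization the leading homogeneous part is $27a_2^2$, a perfect square of a linear form, and the next term $-a_1^3$ is not divisible by $a_2$, which is exactly the normal form of an $A_2$ cusp. I would present this together with the intersection multiplicity count $\mathcal C\cap\ell_2 = 3\mathcal P$ as a single combined verification, since both follow from the same substitution $a_2 = 0 \Rightarrow a_1^3 = 0$.
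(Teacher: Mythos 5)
Your proposal is correct, and the computations you outline do all close (for instance, the Hessian of $G = 27a_0a_2^2-9a_1^2a_2-a_1^3$ works out to a nonzero constant times $a_2^2(a_1+3a_2)$, and intersecting the factor $a_1+3a_2$ with $\mathcal C$ yields $27a_2^2(a_0-2a_2)=0$, so the only smooth point of $\mathcal C$ on the Hessian is $\mathcal Q=(2:-3:1)$). The overall strategy --- direct verification from the explicit equation and parametrization --- matches the paper's, but you handle two sub-steps genuinely differently. For the cusp, the paper argues that the singularity at $\mathcal P$ is a cusp rather than a node because $\gamma$ has only one preimage, $(0:1)$, over $\mathcal P$ (a node would have two branches and hence two preimages under the normalization $\mathbb P^1\to\mathcal C$); you instead pass to the chart $a_0=1$, read off the tangent cone $27a_2^2$, and identify the $A_2$ normal form. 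Both are sound; the paper's is shorter, while yours has the advantage of simultaneously producing the tangent cone, which is exactly what certifies $\ell_2$ as the \emph{unique} line meeting $\mathcal C$ at $\mathcal P$ with multiplicity three (your substitution $a_2=0\Rightarrow a_1^3=0$ only exhibits one such line; uniqueness needs the tangent-cone observation you make, since any line off the tangent cone meets a double point with multiplicity exactly two). Minor caveat: the cubic part of $G$ in that chart is $-9a_1^2a_2-a_1^3$, not just $-a_1^3$; what matters is that this cubic part is not divisible by $a_2$, which your $-a_1^3$ term witnesses, so the conclusion stands. For the flex, the paper invokes the classical fact that a cuspidal cubic has exactly one flex (citing \cite{BCGM}) and then only needs to exhibit one, by checking that $\ell_3$ meets $\mathcal C$ at $\mathcal Q$ with multiplicity three; your Hessian (or Wronskian) computation is self-contained and delivers uniqueness without the external reference, at the cost of one more determinant. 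You also supply an irreducibility argument (linearity in $a_0$) to justify that the image of $\gamma$ is all of $V(G)$, a point the paper passes over as immediate.
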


\begin{proof} It is elementary to check that $\mathcal C$ has a singularity at $(1:0:0)$. This is a cusp rather than a node because there is only one point $(\alpha : \beta)$ in $\mathbb P^1$ such that $\gamma((\alpha:\beta)) = (1:0:0)$, namely $(0:1)$.
The fact that a cuspidal cubic has only one flex point can be found in \cite[page 146]{BCGM}. One can check that the tangent line  at $\mathcal Q = (2:-3:1)$ is the line $\ell _3$  and that the intersection of this line with $\mathcal C$ at $\mathcal Q$ has multiplicity  3. Hence $\mathcal Q$ is indeed a flex point. The rest is immediate.
\end{proof}

Below we slightly abuse notation and refer to the line $\ell _2$ as the \emph{tangent line to $\mathcal C$ at $\mathcal P$}.

\begin{prop}\label{prop:HF}
  The Hilbert function of $S/\ann(a_0p_1^3+a_1np_1p_1+a_2n^2p_3)$ is
  \[
    \begin{cases}
      (1,1,1,1)& \text{at $\mathcal P = (1:0:0)$}\\
      (1,n-1,n-1,1)& \text{at $\mathcal Q = (2,-3,1)$}\\
      (1,n,n,1)& \text{otherwise.}
    \end{cases}
  \]
\end{prop}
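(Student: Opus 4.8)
The plan is to compute the Hilbert function by analyzing the ideal $\ann(F)$ degree by degree, where $F = a_0 p_1^3 + a_1 n p_1 p_2 + a_2 n^2 p_3$. Since $S/\ann(F)$ is artinian Gorenstein with socle degree $3$, the Hilbert function is symmetric of the form $(1, h_1, h_1, 1)$, so it suffices to determine $h_1 = \dim_k [S/\ann(F)]_1 = n - \dim_k \ann(F)_1$. Thus the heart of the argument is to identify exactly when $\ann(F)$ contains linear forms, i.e. when there is a linear form $\ell' = \sum c_i x_i$ with $\ell' \circ F = 0$.

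First I would observe that $F$ is a symmetric polynomial, so it is invariant under the action of the symmetric group $\Sigma_n$ permuting the $X_i$. Consequently $\ann(F)$ is a $\Sigma_n$-stable ideal, and so $\ann(F)_1$ is a $\Sigma_n$-subrepresentation of $S_1 = \langle x_1, \dots, x_n\rangle$. The standard representation $S_1$ decomposes as the trivial representation $\langle \ell \rangle$ (where $\ell = x_1 + \dots + x_n$) plus the $(n-1)$-dimensional standard representation, both irreducible (in characteristic $0$, for $n \geq 3$). Hence $\ann(F)_1$ is one of: $0$, $\langle \ell \rangle$, the standard piece, or all of $S_1$. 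I would then rule out the middle two or handle them directly: compute $\ell \circ F$ using $\ell \circ p_j = j\, p_{j-1}$ (treating $p_0 = n$), which gives $\ell \circ F = 3 a_0 n p_1^2 + a_1 n (p_1^2 + 2 n p_2 \cdot \tfrac{?}{?}) \dots$ — more precisely one computes $\ell \circ (p_1^3) = 3 n p_1^2$, $\ell \circ (p_1 p_2) = 2 p_1^2 + 2 n p_2$ (being careful: $\ell \circ p_1 = n$, $\ell \circ p_2 = 2 p_1$), and $\ell \circ p_3 = 3 p_2$; this is a concrete symmetric quadric which vanishes only for the zero form, so $\ell \notin \ann(F)$ for any $(a_0 : a_1 : a_2)$. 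Similarly, applying $x_i - x_j$ to $F$ and using symmetry, one finds $(x_i - x_j) \circ F = 0$ for all $i,j$ forces $F$ to depend only on $p_1$, and a quick check shows $a_0 p_1^3$ with $a_1 = a_2 = 0$ has $F = a_0 p_1^3$, whose annihilator in degree $1$ is the standard representation precisely — and this is the point $\mathcal{P} = (1:0:0)$. So the standard piece lies in $\ann(F)_1$ exactly at $\mathcal{P}$, giving Hilbert function $(1,1,1,1)$ there.

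It then remains to decide, away from $\mathcal{P}$, whether $\ann(F)_1 = 0$ or $\ann(F)_1 = S_1$ — but the latter would force $F = 0$, so away from $\mathcal{P}$ we have $\ann(F)_1 = 0$ and the Hilbert function is $(1, n, \dots)$ unless it fails to reach the compressed value, which can only happen if $\ann(F)$ has \emph{quadric} generators forcing $h_2 < n$. By Gorenstein symmetry $h_2 = h_1$, so this is automatic once $h_1 = n$ — wait, that is not quite right: symmetry only says the sequence is a palindrome among the \emph{nonzero} entries, so I must separately check $\dim_k [S/\ann(F)]_2 = n$, equivalently $\dim_k \ann(F)_2 = \binom{n+1}{2} - n$. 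The cleanest route is the apolarity/catalecticant description: $\dim_k [S/\ann(F)]_2$ equals the rank of the middle catalecticant (the $S_1 \times S_2 \to k$, resp. $S_1\to E_2$, contraction map $x_i \mapsto x_i \circ F$), which drops below $n$ exactly on a proper subvariety of $\mathbb{P}^2$. Here the special point is $\mathcal{Q} = (2:-3:1)$: I would compute $x_i \circ F$ at $\mathcal{Q}$ explicitly and show the images $\{x_1 \circ F, \dots, x_n \circ F\}$ span an $(n-1)$-dimensional space (they satisfy one linear relation coming from $\ell \circ (\text{that combination}) $ being the apolar perp), whereas at every other point of $\mathbb{P}^2 \setminus \{\mathcal{P}, \mathcal{Q}\}$ they are linearly independent. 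This is the computational core of the proof and the main obstacle: one must exhibit the precise rank-$(n-1)$ locus of the catalecticant and check it is exactly $\{\mathcal{Q}\}$ (together with $\mathcal{P}$, where the rank in degree $1$ already dropped). I expect this to reduce, after exploiting $\Sigma_n$-symmetry to diagonalize the catalecticant into the $\ell$-block and the standard-representation block, to computing two small determinants in $a_0, a_1, a_2$: one that never vanishes (giving that the trivial block is always nondegenerate away from $\mathcal{P}$) and one whose vanishing locus is a single point of $\mathbb{P}^2$, which I would identify as $\mathcal{Q}$ by direct substitution. Once those two rank computations are in hand, the three cases of the Hilbert function follow immediately from Gorenstein symmetry.
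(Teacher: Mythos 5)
Your overall strategy --- use $\mathfrak S_n$-invariance to reduce $[\ann(F)]_1$ to one of $0$, $V_0=\langle \ell\rangle$, $V_1$ (the standard representation), or all of $S_1$, and then decide which case occurs at each point of $\mathbb P^2$ --- is exactly the paper's, but your execution of the trivial-representation case contains an error that loses the point $\mathcal Q$. You claim that $\ell\circ F$ ``vanishes only for the zero form, so $\ell\notin\ann(F)$ for any $(a_0:a_1:a_2)$.'' This is false. Using $\ell\circ p_1=n$, $\ell\circ p_2=2p_1$, $\ell\circ p_3=3p_2$ (and correcting your value of $\ell\circ(p_1p_2)$, which is $np_2+2p_1^2$, not $2p_1^2+2np_2$), one gets
\[
\ell\circ(a_0p_1^3+a_1np_1p_2+a_2n^2p_3)=(3a_0+2a_1)np_1^2+(a_1+3a_2)n^2p_2 .
\]
Since $p_1^2$ and $p_2$ are linearly independent quadrics, this vanishes iff $3a_0+2a_1=0$ and $a_1+3a_2=0$; two linear conditions on three homogeneous coordinates always have a nontrivial common solution, here the single point $(2:-3:1)=\mathcal Q$. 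That point is precisely where the Hilbert function drops to $(1,n-1,n-1,1)$, so your claim contradicts the very statement you are proving. You do later anticipate a catalecticant rank drop at $\mathcal Q$, but you frame it as a degree-two phenomenon whose ``computational core'' you leave unexecuted; in fact the drop is already visible, and most easily computed, in degree one.

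The second issue is your hesitation about Gorenstein symmetry. For $A=S/\ann(F)$ with $F$ a nonzero cubic, the socle degree is exactly $3$ and $h_i(A)=h_{3-i}(A)$ for all $i$, because $h_i(A)$ equals the rank of the catalecticant $S_i\to E_{3-i}$, $g\mapsto g\circ F$, and the maps $S_1\to E_2$ and $S_2\to E_1$ are adjoint with respect to the apolarity pairing, hence have equal rank. So $h_2=h_1$ automatically, the entire degree-two analysis you propose is unnecessary, and the proof reduces to: $[\ann(F)]_1=V_1$ iff $F=\lambda p_1^3$, i.e.\ at $\mathcal P$, giving $(1,1,1,1)$; $[\ann(F)]_1=V_0$ iff $(a_0:a_1:a_2)=\mathcal Q$ by the displayed computation, giving $(1,n-1,n-1,1)$; and $[\ann(F)]_1=0$ otherwise, giving $(1,n,n,1)$. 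This is the paper's proof.
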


\begin{proof}
  Since $\ann(F)$ is $\mathfrak S_n$-invariant there are three
  possibilities for $[\ann(F)]_1\subseteq [S]_1 = V_0\oplus V_1$,
  where $V_0$ is the trivial representation and $V_1$ is the standard
  representation. If $[\ann(F)]_1\ne 0$, we have $[\ann(F)]_1 = V_0$
  or $[\ann(F)]_1 = V_1$. In the second case, $[\ann(F)]_1 =
  \langle p_1\rangle^\perp$ and $F = \lambda p_1^3$ and in the first
  case $[\ann(F)]_1 =[\langle p_1\rangle]_1$. There is a unique solution
  to $p_1\circ F =0$  since
  \[
    \begin{split}
    p_1\circ (a_0p_1^3+a_1np_1p_2+a_2n^2p_3) = 3na_0p_1^2 +
    a_1n^2p_2+2na_1p_1^2 + 3a_2n^2p_2 \\= (3a_0+2a_1)np_1^2 + (a_1
    +3a_2)n^2p_2.
  \end{split}
\]
  In all other points $[\ann(F)]_1= 0$.
\end{proof}

In order to find the Waring rank of all symmetric cubic forms we
need the following lemma about small orbits in $\mathbb P^{n-1}$ under
the action of $\mathfrak S_n$. We will assume that $n\ge 3$.

\begin{lemma}\label{lemma:orbits}
  If $X$ is an orbit of points in $\mathbb P^{n-1}$ under the
  action of $\mathfrak S_n$ with $|X|\le n$, we have that one of the
  following  holds
  \begin{enumerate}
    \item $|X|=1$ and $X =\{(1:1:\cdots:1)\}$.
    \item $|X|=n$ and $X =\{(\alpha:\beta:\cdots:\beta)\}$,
      $\alpha\ne\beta$.
      \item $n=3$ and $X = \{(1:\xi:\xi^2),(1:\xi^2:\xi)\}$, where
        $\xi^2+\xi+1=0$.
      \item $n=4$ and $X = \{(1:1:-1:-1), (1:-1:1:-1), (1:-1:-1:1)\}$.
  \end{enumerate}
\end{lemma}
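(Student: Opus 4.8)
The plan is to classify orbits $X \subseteq \mathbb P^{n-1}$ under $\mathfrak S_n$ with $|X| \le n$ by analyzing the stabilizer of a representative point and translating smallness of the orbit into a statement about the structure of the coordinates. Let $v = (v_1 : v_2 : \cdots : v_n)$ be a point with orbit $X$, and let $G = \operatorname{Stab}(v) \le \mathfrak S_n$, so $|X| = [\mathfrak S_n : G] = n!/|G|$. The condition $|X| \le n$ forces $|G| \ge (n-1)!$, which is a very strong constraint: the only subgroups of $\mathfrak S_n$ of index at most $n$ are (for $n \ge 5$) the whole group $\mathfrak S_n$, the point stabilizers $\mathfrak S_{n-1}$, and nothing else, by the classical fact that $\mathfrak S_n$ has no proper subgroup of index strictly between $1$ and $n$ except $\mathfrak{A}_n$ (index $2$) which only matters for small $n$, and $\mathfrak{A}_n$ does not arise as a stabilizer in this permutation action on coordinates anyway since swapping two equal coordinates fixes the point but need not be even. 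So for $n \ge 5$ the stabilizer is either all of $\mathfrak S_n$ — giving case (1), $v = (1:1:\cdots:1)$ — or a point stabilizer $\mathfrak S_{n-1}$, meaning $n-1$ of the coordinates are equal and the remaining one is different: case (2), $v = (\alpha : \beta : \cdots : \beta)$ with $\alpha \ne \beta$.

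For the small cases $n = 3$ and $n = 4$ I would simply enumerate directly. The stabilizer $G$ must act on the index set $\{1,\dots,n\}$ preserving the level sets of the coordinate function $i \mapsto v_i$ (up to the global scalar — but scaling is trivial on the index-partition level), so $G$ is contained in a Young subgroup $\mathfrak S_{\lambda_1} \times \cdots \times \mathfrak S_{\lambda_r}$ where $\lambda$ is the partition recording multiplicities of distinct coordinate values; conversely $G$ contains the subgroup of that Young subgroup that genuinely fixes the projective point. For $n=3$: partitions $(3), (2,1), (1,1,1)$ give orbit sizes $1, 3, 6$; size $3$ is case (2); size $\le 3$ rules out the $(1,1,1)$ type in general, but one must check whether some special point of type $(1,1,1)$ has a larger stabilizer via a non-Young symmetry — that is exactly the cyclic symmetry giving the cube-root-of-unity point $(1:\xi:\xi^2)$, whose orbit under $\mathfrak S_3$ has size $2$ (stabilized by $\mathfrak{A}_3$), yielding case (3). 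For $n=4$: the orbit of $(1:1:-1:-1)$ has size $3$ (its stabilizer is the dihedral group of order $8$ stabilizing the partition of $\{1,2,3,4\}$ into two pairs, which here also respects the sign pattern up to the scalar $-1$), giving case (4); one should check no other partition or special point of $\mathbb P^3$ produces an orbit of size $\le 4$ other than cases (1) and (2).

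Concretely, I would organize the proof as: (i) reduce to bounding $|G|$ and recall the subgroup-index fact for $\mathfrak S_n$, $n \ge 5$, dispatching those $n$ into cases (1)–(2); (ii) handle $n = 3$ by listing coordinate partitions and noting the extra cyclic-symmetry point, obtaining (1), (2), (3); (iii) handle $n = 4$ likewise, checking the pairing-symmetry point $(1:1:-1:-1)$ and verifying nothing else of size $\le 4$ appears, obtaining (1), (2), (4); and (iv) double-check that cases (3) and (4) genuinely cannot occur for larger $n$ — but this is immediate since there the orbit of such a "balanced" point grows beyond $n$.

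The main obstacle is step (iv) in spirit, i.e. being careful about whether a point of "generic" coordinate-partition type (many distinct values) could nonetheless have an unexpectedly large stabilizer coming from a symmetry that is not a coordinate permutation compatible with the partition — but of course the group acting is literally $\mathfrak S_n$ permuting coordinates, so the stabilizer \emph{is} forced to preserve the partition into level sets, and then the projective scalar gives only the extra possibilities seen in cases (3) and (4). So the real content is the elementary group theory fact about indices of subgroups of $\mathfrak S_n$ together with a finite, explicit check for $n \in \{3,4\}$; I expect the write-up to be short once that fact is invoked.
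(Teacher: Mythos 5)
Your route is genuinely different from the paper's: the paper normalizes a representative to $(1:t_1:\cdots:t_{n-1})$ and directly enumerates the possible coordinate patterns for $n=3$, $n=4$ and $n\ge 5$, whereas you reduce to bounding the index of the stabilizer. That reduction is a reasonable way to dispatch $n\ge 5$, but as written your argument has two concrete gaps there. First, the group-theoretic fact you invoke is false for $n=6$: besides $\mathfrak S_6$, $\mathfrak A_6$ and the point stabilizers, $\mathfrak S_6$ has a second conjugacy class of index-six subgroups isomorphic to $\mathfrak S_5$ (the transitive copies, obtained from the point stabilizers via the outer automorphism), and your proof must rule these out as stabilizers. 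Second, your dismissal of $\mathfrak A_n$ (``swapping two equal coordinates fixes the point but need not be even'') is not a proof, and your later assertion that ``the stabilizer is forced to preserve the partition into level sets'' is false as stated: the stabilizer of a \emph{projective} point consists of those $\sigma$ with $\sigma v=\lambda(\sigma)v$ for some scalar $\lambda(\sigma)$, and it is exactly this scalar that creates cases (3) and (4) --- for $(1:\xi:\xi^2)$ all coordinates are distinct, yet the stabilizer is $\mathfrak A_3$, strictly larger than the Young subgroup of the level-set partition. So the very phenomenon you wave away is the one that needs control for $n\ge 5$.

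Both gaps are closed by one observation you do not make: $\lambda\colon \operatorname{Stab}(v)\to k^*$ is a group homomorphism, every element of $\ker\lambda$ fixes $v$ as a vector, and hence $v$ is constant on the orbits of $\ker\lambda$ in $\{1,\dots,n\}$; moreover $\ker\lambda$ contains the commutator subgroup of $\operatorname{Stab}(v)$. For $n\ge 5$ the group $\mathfrak A_n$ is perfect and transitive, so $\operatorname{Stab}(v)\supseteq\mathfrak A_n$ forces $v=(1:\cdots:1)$; the same argument eliminates the exotic transitive $\mathfrak S_5\le\mathfrak S_6$, whose derived subgroup (an $\mathfrak A_5$ acting transitively on six letters) again forces all coordinates equal; and for a point stabilizer $\mathfrak S_{n-1}$ the character is trivial or the sign, and the sign is easily excluded, giving case (2). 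With that lemma inserted, your plan for $n\ge 5$ together with the finite checks for $n=3,4$ (which match what the paper does in those cases) does yield the statement.
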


\begin{proof}
  If $|X|>1$, we have at least one point of the form
  $(1:t_1:\dots:t_{n-1})$. If not all $t_i$ are equal, we have at
  least $n-1$ points of this form. In the case $n=3$, this gives a
  possible orbit $\{(1:t_1:t_2),(1:t_2:t_1)\}$ if $(t_2:t_1:1) =
  (1:t_1:t_2)$ or $(t_2:t_1:1) =  (1:t_2:t_1)$. In the first case,
  $t_2=1$ and we get $|X|=3$ or $|X|=1$. In the second case, we get
  $t_2^2=t_1$, $t_1^2=t_2$, so $t_1^3=t_2^3=1$. Thus we are we are
  either in case (1) or case (3). When $n=4$, we get a possible orbit
  if $X = \{(1:t_1:t_1:t_2),(1:t_2:t_1:t_2),(1:t_2:t_2:t_1)\}$. If
  $t_1\ne t_2$ we must have $(t_1:1:t_1:t_2) = (1:t_2:t_1:t_2)$ or
  $(t_1:1:t_1:t_2) = (1:t_2:t_2:t_1)$. In the first case we get
  $t_1=1$ and we are in case (2). In the second case we get
  $t_1^2=t_2^2$ and $t_1t_2=1$ which gives case (4).
  If $n\ge 5$, we have to have $(1:t_1:t_2:\cdots:t_2)$ and $t_2=1$
  which leads to the case (2).
\end{proof}

\begin{prop} \label{WR}
  The Waring rank of the symmetric cubic form $F =
  a_0p_1^3+a_1np_1p_2+a_2n^2p_3$ at $(a_0 : a_a : a_2)$  is
  \[wr(F) =
    \begin{cases}
      1 & \text{at $\mathcal P = (1:0:0)$;}\\
      n-1& \text{at $\mathcal Q = (2:-3:1)$ if $n=3$; }\\
      n&  \text{at $\mathcal C\setminus \{\mathcal P\}$  and, if $n = 3$, also at $\ell _1 \setminus \{ \mathcal Q \}$}; \\
      2(n-1)& \text{at  $\ell _2\setminus\{\mathcal P\}$}; \\
      n+1& otherwise.
    \end{cases}
  \]
\end{prop}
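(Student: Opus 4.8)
The strategy is to combine the upper bounds coming from explicit power sum decompositions with lower bounds coming from two sources: the Hilbert function (Proposition \ref{prop:HF}) and, more delicately, the Apolarity Lemma together with the classification of small $\mathfrak S_n$-orbits in $\mathbb P^{n-1}$ (Lemma \ref{lemma:orbits}). First I would dispose of the easy cases. At $\mathcal P = (1:0:0)$ the form is $a_0 p_1^3$, a cube of a linear form, so $wr = 1$. The decomposition (\ref{eq:parametrization}), $F(\alpha,\beta) = \sum_{i=1}^n(\alpha n X_i + \beta p_1)^3$, shows that every point of $\mathcal C$ has Waring rank at most $n$; for such a point other than $\mathcal P$ the Hilbert function is $(1,n,n,1)$ by Proposition \ref{prop:HF}, and a rank $\le n-1$ decomposition would force the $n-1$ (or fewer) dual points to impose independent conditions failing at degree $n-1$ on $\langle$hyperplanes$\rangle$—more simply, $n-1$ general points in $\mathbb P^{n-1}$ cannot apolar-annihilate a form whose apolar ideal has no linear or quadratic part of the required codimension; one checks $wr \ge n$ by noting that fewer than $n$ linear forms span a proper subspace of $S_1^\vee = E_1$, so their cubes lie in the subring generated by $< n$ variables, whereas $F$ genuinely involves all $n$ variables (it is not a cone) unless we are at $\mathcal P$. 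Hence $wr = n$ on $\mathcal C \setminus \{\mathcal P\}$, with the single exception that for $n = 3$ the flex $\mathcal Q = (2:-3:1)$ has Hilbert function $(1,2,2,1)$, so $F$ is (apolar-equivalent to) a binary cubic, which has rank $n - 1 = 2$.

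Next I would handle $\ell_2 \setminus \{\mathcal P\}$, i.e. forms $a_0 p_1^3 + a_1 n p_1 p_2$ with $a_1 \ne 0$. Writing $p_1 p_2$ as a combination of cubes: since $X_i^3$, $p_1^3$ and $p_1 p_2$ span only a $2$-dimensional space modulo the relation among power sums, one obtains $p_1 p_2 = \frac{1}{?}(\sum_i X_i^3$-type expression$)$—concretely the Newton-type identity lets us write $6 p_1 p_2 = (\sum_i (p_1 + X_i)^3 - \sum_i (p_1 - X_i)^3)/\text{const}$ after subtracting off multiples of $p_3$ and $p_1^3$, which generically needs $2n$ cube summands but can be trimmed to $2(n-1)$ using that $\sum_i X_i = p_1$ kills one degree of freedom. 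The upper bound $wr \le 2(n-1)$ thus comes from such an explicit decomposition; the lower bound is the crux. The key point is that a minimal apolar subscheme for a form on $\ell_2$ cannot be reduced of length $\le n$: its ideal would contain $\ell = \sum x_i$ (since $p_2$-forms are annihilated by $\ell$ composed appropriately) forcing the points into a hyperplane, and an $\mathfrak S_n$-invariant—or rather, the orbit-union under the stabilizer—set of $\le n$ reduced points is classified by Lemma \ref{lemma:orbits}, and none of cases (1)–(4) gives enough generic points; checking each of the four cases directly shows the resulting span of cubes never contains $F$. Combined with the fact (to be proved or cited) that the cactus rank on $\ell_2 \setminus \{\mathcal P\}$ is $n+1$ while the Waring rank exceeds it, one gets $wr = 2(n-1)$.

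For the generic case ("otherwise"), the Hilbert function is $(1,n,n,1)$, so $wr \ge n$; one must improve this to $wr \ge n+1$ and exhibit a decomposition with exactly $n+1$ cubes. The lower bound $wr \ge n+1$ follows from apolarity: if $wr = n$, the $n$ dual points would have to be apolar to $\ann(F)$, and a length-$n$ reduced scheme apolar to a Gorenstein algebra with Hilbert function $(1,n,n,1)$ must be a complete intersection-type configuration forcing $F$ onto $\mathcal C$ (this is where one invokes that the generators of $\ann(F)$ in degree $2$ cut out only $\mathcal C$); points off $\mathcal C$ therefore need at least $n+1$ summands. The upper bound $wr \le n+1$ comes from writing $F = F(\alpha,\beta) + c \cdot X_1^3$ (or $+c\cdot L^3$ for a suitable single extra linear form) for appropriate parameters — perturbing a point on $\mathcal C$ by one extra cube sweeps out a full neighborhood, hence all of $\mathbb P^2$, giving $n+1$ terms. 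I expect the \textbf{main obstacle} to be the lower bound on $\ell_2 \setminus \{\mathcal P\}$, namely ruling out $wr \le 2n - 3$: this requires the careful case analysis of Lemma \ref{lemma:orbits} applied not just to single orbits but to unions of orbits summing to $\le 2(n-1) - 1$ points, together with showing that the forced containment of $\ell$ in any small apolar ideal is incompatible with apolarity to $F$ on $\ell_2$; this is the one place where the power sum structure (rather than soft dimension counting) must be exploited in detail.
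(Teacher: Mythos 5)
Your outline is right for the cusp, for generic points of $\mathcal C$, and for the generic ``$n+1$'' case (there your lower bound via $[I_X]_2=[\ann(F)]_2$ plus Lemma~\ref{lemma:orbits} is exactly the paper's argument, and the upper bound via the line through the cusp is also the paper's). But there are two genuine gaps. First, the flex point $\mathcal Q$ for $n\ge 4$: your lower bound on $\mathcal C\setminus\{\mathcal P\}$ rests on $F$ not being a cone, i.e.\ $[\ann(F)]_1=0$, but at $\mathcal Q$ the Hilbert function is $(1,n-1,n-1,1)$ for \emph{every} $n$, so this argument only gives $wr\ge n-1$ there, and you must still rule out rank $n-1$ when $n\ge 4$. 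The paper does this by observing that rank $n-1$ would force $[\ann(F)]_2=[I_X]_2$ with $X$ a union of $\mathfrak S_n$-orbits of total size $n-1$, which by Lemma~\ref{lemma:orbits} can only happen for $n=3$ or $n=4$, and then checking directly that the three-point orbit $(1:1:-1:-1)$, etc., supports no nonzero symmetric cubic when $n=4$. You treat $n=3$ as ``the single exception'' and silently assume the cone argument covers $\mathcal Q$ for $n\ge 4$; it does not.

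Second, and more seriously, the case $\ell_2\setminus\{\mathcal P\}$. Your lower bound plan is built on two false premises: the ideal of an apolar scheme $X$ need not contain $\ell=\sum x_i$ (indeed $\ell\circ F=(3a_0+2a_1)np_1^2+a_1n^2p_2\ne 0$ when $a_1\ne0$), and $X$ need not be $\mathfrak S_n$-invariant unless $[I_X]_2=[\ann(F)]_2$, which for $\deg X$ strictly between $n$ and $2(n-1)$ is not forced; so Lemma~\ref{lemma:orbits} cannot be applied to unions of orbits here. The paper's actual argument is different: it exhibits the $(n-2)$-dimensional family $Y=\{(\sum\lambda_ix_i)^2:\sum\lambda_i=\sum\lambda_i^2\}$ of squares inside $[\ann(F)]_2$, notes that $[I_X]_2\cap Y=\{0\}$ for a reduced apolar $X$, and the resulting dimension count in $\mathbb P([\ann(F)]_2)$ forces $H_X(2)>2n-3$, hence $wr(F)\ge 2(n-1)$. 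Your upper bound is also not established: ``trimming'' a $2n$-term expression for $p_1p_2$ to $2(n-1)$ terms is asserted, not proved; the paper instead takes the cone over a length-$(n-1)$ Waring decomposition of $h_2$ in the hyperplane variables (a union of $n-1$ lines through $(1:1:\cdots:1)$ whose ideal lies in $\ann(F)$) and applies Bertini to a quadric in $\ann(F)$ to cut out $2(n-1)$ distinct apolar points. Without these two ingredients the $\ell_2$ case, which you correctly identify as the crux, does not go through.
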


\begin{proof}
  The Waring rank is $1$ if and only if the Hilbert function is $(1,1,1,1)$ and we are
  at $(1:0:0)$ which is the cusp of $\mathcal C$ according to
  Lemma~\ref{lemma:curve}.

  We have that the Waring rank has to be at least $n-1$ at the flex
  point $(2:-3:1)$ and at least $n$ at all other points. If the rank
  is  $n-1$ at the flex point we would have $[\ann(F)]_2 = [I_X]_2$
  where $X$ is the support of the Waring decomposition. Since $F$ is
  symmetric, $X$ has to be a union of orbits and
  Lemma~\ref{lemma:orbits} says that we can only have $n=3$ or
  $n=4$. In the case $n=3$, we do get a Waring decomposition of $F$ as
  \[
    F =     (X_1+\xi X_2+\xi^2 X_3)^3 +     (X_1+\xi^2 X_2+\xi X_3)^3
    = 2p_1^3  - 9p_1p_2+9p_3.
  \]
  In the case $n=4$, there is no non-zero symmetric cubic that can be
  expressed in terms of the three linear forms $X_1+X_2-X_3-X_4,
  X_1-X_2+X_3-X_4,X_1-X_2-X_3+X_4$. Thus, for $n\ge 4$, the Waring
  rank has to be $n$ at the flex point.

For points on the curve $\mathcal C$  we have a power sum decomposition with $n$ terms (see Equation \eqref{eq:parametrization}). Moreover, for all these points apart from the flex point and  the cusp, the Hilbert function of $S/\ann (F)$ is $(1,n,n,1)$ by Proposition \ref{prop:HF}. Hence the Waring rank of $F$ has to be equal
  to $n$.  

  For points on the line $a_2=0$, which is the tangent line to
  $\mathcal C$ at the cusp, we have that $ F  =
  a_0p_1^3+a_1np_1p_2$. Let $Y =\{(\sum \lambda_ix_i)^2\colon
  \sum_{i=1}^n \lambda_i = \sum_{i=1}^n \lambda_i^2\}$. We have that
  $Y\subseteq\ann(F)$ and $\dim Y = n-2$ as an affine variety. Let $V
  = [\ann(F)]_2$. If $F$ has a Waring decomposition corresponding to
  the reduced set of points $X$, we have that $[I_X]_2 \cap Y
  =\{0\}$. We hence have that
\[
\dim \mathbb P(Y) + \dim \mathbb P([I_X]_2) < \dim \mathbb P([\ann(F)]_2)
\]
which gives
\[
H_X(2) > n + n-3 = 2n-3.
\]
This shows that $wr(F)\ge 2(n-1)$. We will now show that $F$ has a
Waring decomposition of length $2(n-1)$.
Let $R =
k[y_1,y_2,\dots,y_{n-1}] =   k[x_1-x_n,x_2-x_n,\dots,x_{n-1}-x_n]\subseteq S$ and let $I_Z$
  denote the ideal in $S$ generated by $(x_i-x_j)(x_i+x_j-2x_k)$, for $i\ne
  k, j\ne k$. The ideal $J$ in $R$ generated by the same forms is an
  artinian Gorenstein ideal given by the annihilator of
  $h_2(Y_1,Y_2,\dots,Y_n)$. Thus $Z$ is a zero-dimensional Gorenstein
  scheme of length $n+1$ concentrated at the point
  $(1:1:\cdots:1)$. Moreover, it is contained in $\ann(F)$. We now
  take a Waring decomposition of $h_2(Y_1,Y_2,\dots,Y_n)$ which has
  length $n-1$ since $h_2$ is a non-degenerate quadric in $n-1$
  variables. This gives a reduced subscheme of length $n-1$ and the
  cone over this is a reduced curve $\mathcal D$ consisting of $n-1$
  lines meeting at the point $(1:1:\cdots:1)$. The ideal $\ann(F)$ is
  artinian and contains $I_{\mathcal D}$. Hence Bertini's theorem says
  that we can find a quadric in $\ann(F)$ intersecting $\mathcal D$ in
  $2(n-1)$ distinct points. The ideal of these points is contained in
  $\ann(F)$ which shows that $wr(F)\le 2(n-1)$.

  It remains to show that $wr(F)=n+1$ away from $\mathcal C$ and its
  tangent line at the cusp. Through any such point, we have a line
  through the cusp, meeting the curve $\mathcal C$ in exactly one more
  point. This gives a decomposition into $n+1$ cubes of linear forms.
  Since the Hilbert function of $S/\ann(F)$ is $(1,n,n,1)$,  $wr(F) \ge
  n$ and if it is equal to $n$, the ideal of the corresponding points
  equals the ideal $\ann(F)$ in degree $2$. Hence the points are
  invariant under the action of $\mathfrak S_n$ and by
  Lemma~\ref{lemma:orbits} the only possible orbits give points on the
  curve $\mathcal C$.
\end{proof}

\begin{figure}[hbt]
  \centering
  \includegraphics[angle = -90,width=0.8\textwidth]{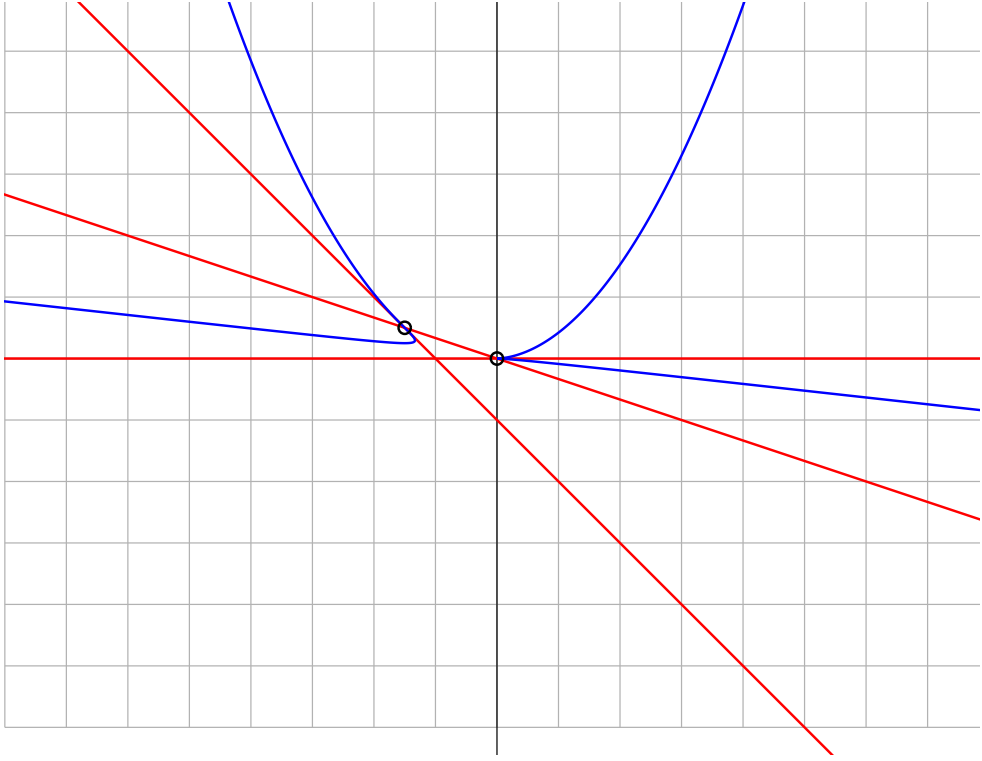}
  \caption{The cubic curve and the lines}
  \label{fig:cubic}
\end{figure}

Before looking at the cactus rank, we will study the structure of the
resolutions of $S/\ann(F)$ for symmetric cubics.

\begin{prop}\label{prop:resolutions}
  For any symmetric cubic $F = a_0 p_1^3 + a_1 n p_1p_2+ a_2 n^2 p_3$
  the Betti numbers of $S/\ann(F)$ can be obtained from the Betti numbers of
  the coordinate ring of a zero-dimensional locally Gorenstein scheme
  $X$. We can write $\ann(F) = I_X + J_X$, where $J_X$ is a canonical
  ideal and for $i =0,1,\dots,n$ and all $j$ we have that
  \[
    \beta_{i,j} (S/\ann(F))  = \beta_{i,j}(S/I_X) + \beta_{n-i,3+n-j}(S/I_X).
  \]
  There are the following cases for the resolution of $I_X$.
  \begin{itemize}
  \item[$(i)$]  For $F$ at $\mathcal P$, $X$ is a single point and the
    resolution of $S/I_X$ is linear with $\beta_{i,i} =
    \binom{n-1}{i}$, for $i=0,1,\dots,n-1$.
  \item[$(ii)$] For $F$ at $\mathcal Q$, and $n=3$, $X$ is two points
    and the resolution of $S/I_X$ is a Koszul complex with
    $\beta_{0,0}=\beta_{1,1} =\beta_{1,2}=\beta_{2,4}=1$.
  \item[$(iii)$] For $F$ at $\mathcal Q$, and $n>3$, $X$ is an 
    arithmetically Gorenstein set of $n$ points in a hyperplane and the
    resolution is forced by the $h$-vector $(1,n-2,1)$, i.e.,
    $\beta_{0,0} = \beta_{1,1} = \beta_{n-2,n} = \beta_{n-1,n+1} =
    1$ and \[
      \beta_{i,i+1} =  \frac{i(n-2-i)}{n-1}\binom{n}{i+1} +
      \frac{(i-1)(n-1-i)}{n-1}\binom{n}{i}, \quad i = 1,2,\dots,n-2.
      \]
  \item[$(iv)$] For $F$ on  $\ell _1\setminus \{ \mathcal P, \mathcal
    Q$\}, and $n=3$, $X$ is a set of three points not on a line, with a
    linear resolution, i.e., $\beta_{0,0}=1$, $\beta_{1,2}=3$ and $\beta_{2,3}=2$.
  \item[$(v)$]  For $F$ on $\ell _1\setminus \{ \mathcal P, \mathcal
    Q \}$, and $n>3$, $X$ is a set of $n+1$ points where $n$ of them are
    linearly general in a hyperplane. The resolution of $S/I_X$ is
    linear except for the last two homological degrees where $\beta_{n-2,n} =
    \beta_{n-1,n+1} = 1$. The Betti numbers are
    \[
    \beta_{i,i+1} = \frac{i(n-1-i)}{n}\binom{n+1}{i+1}, \qquad i = 1,2,\dots,n-2.
    \]
  \item[$(vi)$] For $F$ at $\mathcal C$ away from $\mathcal P$ and
    $\mathcal Q$, $X$ is a set of $n$ points not in a hyperplane. The
    resolution of $S/I_X$ is linear with $\beta_{0,0}=1$ and 
    \[
      \beta_{i,i+1} = i\binom{n}{i+1}, \qquad i=1,2,\dots,n-1.
    \]
  \item[$(vii)$]  For $F$ on $\ell _2\setminus \{ \mathcal P \}$, $X$
    is a non-degenerate  arithmetically Gorenstein scheme of
    length $n+1$ concentrated at a single point and $S/I_X$ has an
    almost linear resolution, i.e., $\beta_{0,0}=\beta_{n-1,n+1}=1$
    and
    \[
      \beta_{i,i+1} = \frac{i(n-1-i)}{n}\binom{n+1}{i+1}, \qquad  i = 1,2,\dots,n-2.
    \]
  \item[$(viii)$]  For all other $F$, $X$ is an arithmetically
    Gorenstein set of $n+1$ points and $S/I_X$ has an almost linear
    resolution, i.e., $\beta_{0,0}=\beta_{n-1,n+1}=1$
    and
    \[
      \beta_{i,i+1} = \frac{i(n-1-i)}{n}\binom{n+1}{i+1}, \qquad  i = 1,2,\dots,n-2.
    \]
  \end{itemize}
\end{prop}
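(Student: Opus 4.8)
Here is the plan. To every point $(a_0:a_1:a_2)$ of $\mathbb P^2$ we attach a zero‑dimensional, locally Gorenstein scheme $X\subseteq\mathbb P^{n-1}$ with $I_X\subseteq\ann(F)$, we identify $\ann(F)/I_X$ with a canonical ideal of $S/I_X$, and we read off the Betti numbers of $S/\ann(F)$ from those of $S/I_X$ via a mapping cone; the Betti numbers of $S/I_X$ are then recorded case by case from classical facts about points in special position. The scheme $X$ is produced by the decompositions already available: for $F$ on $\mathcal C$ it is the set of $n$ points $\gamma(\alpha:\beta)$ of the power sum decomposition \eqref{eq:parametrization} (a single point at $\mathcal P$, and for $n=3$ the two–point orbit of Lemma~\ref{lemma:orbits} at $\mathcal Q$); for $F$ on $\ell_2\setminus\{\mathcal P\}$ it is the length‑$(n+1)$ Gorenstein scheme concentrated at $(1:\dots:1)$ built in the proof of Proposition~\ref{WR} from $\ann(h_2(Y_1,\dots,Y_n))$; and at all remaining points it is the set of $n+1$ points in the decomposition through the cusp described there ($n$ of them spanning a hyperplane plus one more when $F\in\ell_1$, and $n+1$ points in linearly general position otherwise). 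In every case $S/I_X$ is Cohen--Macaulay of dimension one, generically Gorenstein, with $h$–vector one of $(1)$, $(1,n-2,1)$, $(1,n-1)$, $(1,n-1,1)$; in particular $\operatorname{reg}(S/I_X)\le 2$ and the $a$–invariant of $S/I_X$ equals $2$.

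For the homological step, note that $A=S/\ann(F)$ is artinian Gorenstein of socle degree $3$ and that the surjection $S/I_X\twoheadrightarrow A$ has kernel $\mathfrak a:=\ann(F)/I_X$, which contains a nonzerodivisor of $S/I_X$. By the standard description of Gorenstein quotients of a one–dimensional Cohen--Macaulay generically Gorenstein ring, $\mathfrak a$ is a canonical ideal, and comparing $a$–invariants and socle degrees pins it down as $\mathfrak a\cong\omega_{S/I_X}(-3)$. Writing $G_\bullet$ for the minimal free resolution of $S/I_X$, the module $\mathfrak a$ is then resolved by the dual complex $G_\bullet^{\vee}(-n-3)$, and the mapping cone of a comparison map $G_\bullet^{\vee}(-n-3)\to G_\bullet$ lifting $\mathfrak a\hookrightarrow S/I_X$ resolves $A$. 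This already gives the inequality $\beta_{i,j}(S/\ann F)\le \beta_{i,j}(S/I_X)+\beta_{n-i,\,n+3-j}(S/I_X)$. Minimality of the cone, hence equality — which is the displayed formula — follows from the regularity bound: the nonzero $\beta_{k,\ast}(S/I_X)$ lie in degrees $k$, $k+1$, $k+2$, so the generator degrees of $G_i$ and of $G_{n-i}^{\vee}(-n-3)$ can overlap in at most one degree, and that overlap is excluded either by inspection of the comparison map, or uniformly because the Betti table of the Gorenstein algebra $A$ is already determined by its Hilbert function (Proposition~\ref{prop:HF}), its regularity $3$, self–duality $\beta_{i,j}(A)=\beta_{n-i,n+3-j}(A)$, and the above upper bound.

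It then remains to record $G_\bullet$ in each of the eight configurations. A single point is a linear space, so $S/I_X$ has the Koszul resolution on $n-1$ linear forms, $\beta_{i,i}=\binom{n-1}{i}$, case $(i)$; two points of $\mathbb P^2$ lie on a line, so $I_X$ is a complete intersection of type $(1,2)$ with Koszul resolution, case $(ii)$; three general points of $\mathbb P^2$ give the Hilbert--Burch resolution with $\beta_{1,2}=3$, $\beta_{2,3}=2$, case $(iv)$; $n$ general points of a hyperplane and $n+1$ points in linearly general position are arithmetically Gorenstein with the symmetric $h$–vectors $(1,n-2,1)$ and $(1,n-1,1)$, whose minimal resolutions are forced, cases $(iii)$ and $(viii)$; $n$ general points of $\mathbb P^{n-1}$ have the linear resolution of $(x_ix_j:i<j)$ with $\beta_{i,i+1}=i\binom{n}{i+1}$, case $(vi)$; and the almost linear resolutions of cases $(v)$ and $(vii)$ follow, for $(v)$ by splitting off the hyperplane component through a short liaison or Hilbert–function argument, and for $(vii)$ from the almost linear resolution of $\ann(h_2)$ in $n-1$ variables together with the Bertini construction of $Z$ in Proposition~\ref{WR}. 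Feeding these into the displayed formula produces the stated Betti numbers and recovers the Hilbert functions of Proposition~\ref{prop:HF}.

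The main obstacle is the second step: proving that $\mathfrak a$ is genuinely a canonical ideal rather than merely one with the expected Hilbert function, and ruling out cancellation in the mapping cone. The delicate instances are the almost linear cases $(iii),(v),(vii),(viii)$, where the top free module of $G_\bullet$ sits in degree $n+1$ and, after dualizing and twisting by $-n-3$, in degree $2$ — precisely where the quadric generators of $\ann(F)$ live — so one must verify that the comparison map carries no unit there; the Gorenstein self–duality of the target resolution is what makes this argument clean.
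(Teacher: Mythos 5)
Your architecture is the same as the paper's: build $X$ from the Waring decompositions of Proposition~\ref{WR} (with the separate length-$(n+1)$ Gorenstein scheme on $\ell_2\setminus\{\mathcal P\}$), identify $\ann(F)/I_X$ with $\omega_{S/I_X}(-3)$, resolve $S/\ann(F)$ by a mapping cone, and record the resolutions of $S/I_X$ case by case; the paper packages the canonical-ideal step as a citation of \cite[Theorem 3.4]{B} and, in the low-regularity cases, gets the Betti-number formula from \cite[Proposition 3.5]{B}. The gap is in your minimality argument. The ``uniform'' claim --- that the Betti table of $A=S/\ann(F)$ is determined by its Hilbert function, its regularity, self-duality and the mapping-cone upper bound --- fails precisely in case $(v)$: there $A$ has $\beta_{1,3}(A)=\beta_{n-1,n}(A)=1$ (a cubic minimal generator, used later in the proof of Proposition~\ref{prop:cr}), while case $(viii)$ has the same Hilbert function $(1,n,n,1)$ and no cubic generator; the upper bound alone allows these ghost terms to cancel, so nothing you list decides between $0$ and $1$. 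The paper closes this with a geometric input you do not supply: the unique symmetric quadric $q\in[\ann(F)]_2\setminus[I_X]_2$ vanishes at the point $(1:1:\cdots:1)$ of $X$, so $q$ is a zerodivisor on $S/I_X$; were $\ann(F)$ generated by quadrics it would equal $I_X+(q)$, and then $(S/I_X)/(q)$ would be artinian, which is impossible. The same point undercuts your blanket assertion that $\mathfrak a$ contains a nonzerodivisor: in case $(v)$ the degree-two part of $\mathfrak a$ consists of zerodivisors, and one must pass to degree three, where a general element of $[\ann(F)]_3$ avoids the points of $X$ because $F$ is not a pure cube.

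A secondary misdiagnosis: you flag $(iii)$, $(vii)$, $(viii)$ as the delicate almost-linear cases, but these are the easy ones. There $X$ is arithmetically Gorenstein, so the canonical ideal is principal, generated by a nonzerodivisor quadric; $S/\ann(F)=(S/I_X)/(q)$, its resolution is $G_\bullet$ tensored with the Koszul complex on $q$, minimality is automatic, and the self-duality of $G_\bullet$ converts $\beta_{i-1,j-2}(S/I_X)$ into $\beta_{n-i,n+3-j}(S/I_X)$. Note also that a unit in the comparison map would have to sit in $\phi_{i-1}\colon P_{i-1}\to G_{i-1}$, so the relevant degree overlap is between $P_{i-1}$ and $G_{i-1}$, not between the two summands $G_i$ and $P_{i-1}$ of the same homological degree as you write; in particular your worry about the degree-two generator of $P_0$ is vacuous, since $G_0=S$ is generated in degree $0$. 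Your case-by-case list of resolutions of $S/I_X$ otherwise matches the paper's (Koszul, Hilbert--Burch, pure resolutions attached to the arithmetically Gorenstein $h$-vectors, $n$ general points), except that in case $(v)$ the paper derives the extra top Betti numbers from the two socle elements, in degrees one and two, of the artinian reduction rather than from a liaison argument; a correct count there also yields $\beta_{n-1,n}(S/I_X)=1$, which is exactly what feeds the displayed formula to give $\beta_{1,3}(A)=1$.
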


\begin{proof}
  The existence of the set $X$ comes from the Waring decompositions of
  Proposition~\ref{WR} except for in case $(vii)$ and we will start by
  looking at what happens in that case. Here we have $F = a_0
  p_1^3 + a_1 n p_1p_2$ for some coefficients $a_0,a_1$, where $a_1\ne0$. Let $J$ be
  the ideal generated by the quadratic polynomials in the subring $R =
  k[x_1-x_2,x_1-x_3,\dots,x_1-x_n]$ that annihilate $p_2$. All such
  generators annihilate $p_1^3$ and $p_1p_2$, so $J\subseteq \ann(F)$
  for any $F = a_0 p_1^3 + a_1 n p_1p_2$. Furthermore, $A = R/(R\cap J)$
  is artinian with Hilbert function $(1,n-1,1)$. It is Gorenstein,
  since there is no socle in degree one. Indeed, $(\sum \alpha_i
  x_i)(\sum \beta_i x_i)\circ p_2 = 2\sum \alpha_i\beta_i$ which gives
  a non-degenerate pairing on $R_1$. Thus we have that $J = I_X$ where
  $X$ is a zero-dimensional arithmetically Gorenstein subscheme
  concentrated at the point $(1:1:\cdots:1)$. In fact, $A$ is the
  subring of $S/\ann(p_2)$ generated by
  $x_1-x_2,x_1-x_3,\dots,x_1-x_n$. The symmetric quadric $q$ in
  $\ann(F)$ is given by
  \[
    q = (3a_0+(n+2)a_1)p_1^2 - 3n(a_0+a_1)p_2
  \]
  and evaluation at $(1,1,\dots,1)$ gives
  \[
    q(1,1,\dots,1)  = (3a_0+(n+2)a_1)n^2 - 3n(a_0+a_1)n = a_1 n^2(n-1)
    \ne 0
  \]
  showing that $q$ is a non-zero-divisor on $X$. Since $X$ is
  arithmetically Gorenstein, the ideal $(q)$ is a canonical ideal and
  we have $\ann(F) = I_X + (q)$.

  In all the other cases, the Castelnuovo--Mumford regularity of the set of points $X$ is at most $2$
  and we can apply \cite[Theorem 3.4]{B} to conclude that the image of
  the ideal $\ann(F)$ in the coordinate ring of $X$ is a canonical
  ideal. When the Castelnuovo--Mumford regularity of $X$ is at most one, the statement about the
  Betti numbers follows from \cite[Proposition 3.5]{B}. In the cases where the
  Castelnuovo--Mumford regularity is two and the scheme $X$ is arithmetically Gorenstein,
  the canonical ideal is principal, which means that it is generated
  by a non-zero-divisor. Hence the statement about the Betti numbers
  holds also in this case.

  In case $(vi)$, $S/\ann(F)$ is the quotient of the coordinate ring
  of $X$  by a non-zero-divisor and the ideal is still a canonical
  ideal since $X$ is arithmetically Gorenstein.

  It remains to treat the case $(v)$ where the Castelnuovo--Mumford regularity of $X$ is
  two but the ideal is not given by a non-zero-divisor. We look at the
  short exact sequence
  \[
    0 \longrightarrow J_X \longrightarrow S/I_X \longrightarrow
    S/(I_X+J_X) \longrightarrow 0
  \]
  where $J_X$ is the canonical ideal that is the image of $\ann(F)$ in
  $S/I_X$. Since there may be cancellations, we only get inequalities
  \[
    \beta_{i,j} (S/\ann(F)) \le  \beta_{i,j}(S/I_X) + \beta_{n-i,3+n-j}(S/I_X)
  \]
  but in our case, the only possible cancellation would be in degree
  $3$ and dually in degree $n$. If this cancellation occurred, it would
  mean that the ideal $\ann(F)$ was generated by quadrics, but this is
  not possible, since the symmetric quadric $q$ is not a
  non-zero-divisor on $X$ since it vanishes at the point
  $(1:1:\cdots:1)$.

  For each of the cases, we will now prove the statements about the
  resolutions of $S/I_X$.

  \begin{itemize}
  \item[$(i)$] For $F  = p_1^3$, $X$ is the single point
    $(1:1:\cdots:1)$ and the ideal is generated by the linear forms $x_1-x_2,
   x_1-x_3,\dots,x_1-x_n$. Hence the resolution is linear and
    given by a Koszul complex of length $n-1$, which gives the stated
    Betti numbers.
  \item[$(ii)$] Here the Waring rank is two and we have that the
    ideal of two points in $\mathbb P^2$ is a
    complete intersection of type $(1,2)$. The resolution is a Koszul
    complex which gives the stated Betti numbers.
  \item[$(iii)$] The Waring rank is $n$ and the set of points giving the
    Waring decomposition is the $\mathfrak S_n$-orbit of
    $(1-n:1:1:\cdots:1)$. They lie in a hyperplane and are in
    linearly general position in that hyperplane. Hence they form an
    arithmetically Gorenstein set of points with $h$-vector
    $(1,n-2,1)$. The Betti numbers of the coordinate ring $S/I_X$ are
    given as sums of the consecutive Betti numbers of an artinian
    Gorenstein algebra with the same $h$-vector. This means that
    $\beta_{0,0} = \beta_{1,1} = \beta_{n-2,n} = \beta_{n-1,n+1} =
    1$ and \[
      \beta_{i,i+1} =  \frac{i(n-2-i)}{n-1}\binom{n}{i+1} +
      \frac{(i-1)(n-1-i)}{n-1}\binom{n}{i}, \quad i = 1,2,\dots,n-2,
    \]
    where we get the Betti numbers for the artinian Gorenstein algebra
    with $h$-vector $(1,n-2,1)$ by the formula for the Betti numbers
    of a pure resolution (\cite{HK}).
  \item[$(iv)$] We have that $X = \{ (1:1:1), (1:\xi:\xi^2),
    (1:\xi^2,\xi)\}$, where $\xi^2+\xi+1=0$ and the three points do
    not lie on a line. For three points not on a line in $\mathbb
    P^2$, the resolution is linear and the Betti numbers are
    $\beta_{0,0} = 1$,  $\beta_{1,2} = 3$ and $\beta_{2,3}= 2$.
  \item[$(v)$] $X$ is the union of the single point orbit
    $X_1 = \{(1:1:\cdots:1)\}$ and the $n$-point orbit $X_2$ of
    $(1-n:1:1:\cdots:1)$. Since $X_2$ lies in a hyperplane, but is in
    linearly general position within that hyperplane, $X_2$ is
    arithmetically Gorenstein with $h$-vector $(1,n-2,1)$. Thus the
    $h$-vector of $X$ has to be $(1,n-1,1)$. For an artinian reduction
    $B = S/J$ of $S/I_X$, the ideal in degree $2$ equals the ideal of an
    artinian reduction of $S/I_{X_2}$. Thus we see that $B$ has a
    one-dimensional socle in degree $2$ and a one-dimensional socle in
    degree $1$. This gives us the last column in the Betti table of
    $B$ and forces that $\beta_{n-2,2}=1$. By duality, we get that
    $\beta_{i,i+2}=0$ for $i=0,1,\dots,n-3$. Hence all the remaining
    Betti numbers are equal to the Betti numbers of an arithmetically Gorenstein
    scheme with the same  $h$-vector as $X$, i.e., the same as in case
    $(vii)$ below.
  \item[$(vi)$] Here $X$ is the $\mathfrak S_n$-orbit of
    $(n\alpha+\beta:\beta:\cdots:\beta)$ where $\alpha\ne 0$ and
    $\alpha+\beta\ne0$. Thus $X$ is a set of $n$ points not in a
    hyperplane with $h$-vector $(1,n-1)$ showing that the resolution
    is linear. Again, we can use the formula for the Betti numbers of
    a pure resolution to write
    \[
      \beta_{i,i+1} = i\binom{n}{i+1}, \qquad i=1,2,\dots,n-1.
    \]
  \item[$(vii)$] As we have seen, $X$ is an arithmetically Gorenstein
    subscheme of length $n+1$ concentrated at the point
    $(1:1:\cdots:1)$. The $h$-vector is $(1,n-1,1)$ and the resolution
    of $S/I_X$  is almost linear since the artinian reduction is
    extremely compressed (\cite{FL}). From the formula for the
    Betti numbers of a pure resolution, we get
    \[
      \beta_{i,i+1} = \frac{i(n-1-i)}{n}\binom{n+1}{i+1}, \qquad i=1,2,\dots,n-2.
    \]
  \item[$(viii)$] Here $X$ is the union of the single point orbit $X_1
    = \{(1:1:\cdots:1)\}$ and the $n$-point orbit of
    $\{(n\alpha+\beta:\beta:\cdots:\beta)$ where $\alpha\ne 0$ and
    $\alpha+\beta\ne 0$. This set of points is in linearly general
    position with $h$-vector $(1,n-1,1)$. Thus $X$ is arithmetically
    Gorenstein and $S/I_X$ has an almost linear resolution as in the
    case $(vii)$ above.
  \end{itemize}
\end{proof}

We now turn to the cactus rank and we will also use the following
result by K. Ranestad and F. -O. Schreyer.

\begin{prop}[{\cite[Corollary 1]{RS}}]
    \label{prop:lower bound cactus}
Let $F$ be a homogeneous polynomial. If the degree of any minimal generator of $\ann (F)$ is at most $d$, then one has
\[
cr (F) \ge \frac{1}{d} \cdot \deg( \ann(F)).
\]
\end{prop}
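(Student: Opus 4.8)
The plan is to pass from $F$ to an apolar scheme realizing the cactus rank and then bound the length of $S/\ann(F)$ by the length of a well-chosen artinian reduction of that scheme. Concretely, fix a zero-dimensional subscheme $Z \subseteq \mathbb P^{n-1}$ with $\deg Z = cr(F)$ whose saturated ideal $I_Z$ is contained in $I := \ann(F)$, and set $B = S/I_Z$ and $A = S/I$. The inclusion $I_Z \subseteq I$ induces a graded surjection $B \twoheadrightarrow A$ with kernel $J := I/I_Z$, and since $I_Z$ is the saturated ideal of a zero-dimensional scheme, $B$ is a one-dimensional Cohen--Macaulay graded ring whose multiplicity equals $\deg Z = cr(F)$; writing $H_B(T) = h(T)/(1-T)$ one has $h(1) = cr(F)$. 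Recalling that $\deg(\ann(F))$ is the length $\dim_k A$ of $A$, the target inequality becomes $\dim_k A \le d\cdot cr(F)$.

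The key step will be to exhibit a homogeneous non-zerodivisor on $B$ of degree at most $d$ lying in $J$. Since $\ann(F)$ is generated in degrees $\le d$, so is $J$ (by the images of those generators). Because $A = B/J$ is artinian, $J$ is contained in none of the finitely many associated primes $\mathfrak p_1,\dots,\mathfrak p_s$ of $B$, which, $B$ being Cohen--Macaulay, are just the homogeneous primes of the points of $Z$; in particular $(B/\mathfrak p_i)_t \neq 0$ for every $t\ge 0$. I would then verify the sharper statement that already $J_d \not\subseteq \mathfrak p_i$ for each $i$: picking a generator $g'$ of $J$ with $g' \notin \mathfrak p_i$ and $\deg g' = a \le d$, one has $B_{d-a}\cdot g' \subseteq J_d$, so $J_d \subseteq \mathfrak p_i$ would force $B_{d-a} \subseteq \mathfrak p_i$ by primality, contradicting $(B/\mathfrak p_i)_{d-a}\neq 0$ (and when $a=d$ it contradicts $g' \notin \mathfrak p_i$ directly). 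As $k$ is infinite, a general element $g \in J_d$ then avoids $\mathfrak p_1 \cup\cdots\cup \mathfrak p_s$ and hence is a non-zerodivisor on $B$ of degree exactly $d$.

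With such a $g$ in hand the remainder is bookkeeping: $(g) \subseteq J$ gives a surjection $B/(g) \twoheadrightarrow B/J = A$, so $\dim_k A \le \dim_k B/(g)$; and since $g$ is a non-zerodivisor on the one-dimensional Cohen--Macaulay ring $B$, the exact sequence $0 \to B(-d) \xrightarrow{\,g\,} B \to B/(g) \to 0$ gives $H_{B/(g)}(T) = (1-T^d)\,H_B(T) = (1+T+\cdots+T^{d-1})\,h(T)$, whence $\dim_k B/(g) = d\cdot h(1) = d\cdot cr(F)$ — this is just the standard identity $\dim_k B/(g) = \deg(g)\cdot e(B)$. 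Chaining the inequalities yields $\deg(\ann(F)) = \dim_k A \le d\cdot cr(F)$. I expect the only genuinely delicate point to be the existence of the degree-$\le d$ non-zerodivisor inside $J$: everything hinges on combining that $J$ is generated in degrees $\le d$ with the absence of embedded primes in $B$ via the elementary degree count above, while the rest — the reduction to $Z$, the identification $\deg(\ann(F)) = \dim_k S/\ann(F)$, and the Hilbert-series count — is routine.
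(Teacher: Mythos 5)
Your proof is correct. Note that the paper does not prove this proposition at all---it is quoted verbatim from Ranestad--Schreyer \cite[Corollary 1]{RS}---and your argument (pass to a minimal apolar scheme $Z$, use that $\ann(F)$ is generated in degrees $\le d$ and that $S/I_Z$ is one-dimensional Cohen--Macaulay to produce a degree-$d$ non-zerodivisor in $\ann(F)/I_Z$, then compare lengths via the Hilbert series of $B/(g)$) is essentially the original proof from that reference, with the prime-avoidance step spelled out in more detail than the source.
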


\begin{prop}
  \label{prop:cr}
  The cactus rank of the symmetric cubic form $F =
  a_0p_1^3+a_1np_1p_2+a_2n^2p_3$ at $(a_0 : a_a : a_2)$ is
  \[
  cr(F) =
    \begin{cases}
      1 & \text{at $\mathcal P = (1:0:0)$}; \\
      n-1 & \text{at $\mathcal Q = (2:-3:1)$, provided $n = 3$};\\
       n& \text{at $\mathcal C\setminus \{\mathcal P, \mathcal Q\}$  and, if $n = 3$, also  along  $\ell _1 \setminus \{ \mathcal Q \}$}; \\
       n+1& otherwise.
    \end{cases}
  \]
\end{prop}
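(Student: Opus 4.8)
The strategy is to establish $\operatorname{cr}(F)$ by pinning it between the lower bound of Proposition~\ref{prop:lower bound cactus} and an upper bound coming either from the Waring decompositions of Proposition~\ref{WR} or from a direct construction of an apolar scheme. The key observation is that Proposition~\ref{prop:resolutions} already exhibits, in each case, a zero-dimensional locally Gorenstein scheme $X$ with $I_X\subseteq\ann(F)$, and this immediately gives $\operatorname{cr}(F)\le\deg X$; combined with the degrees read off from that proposition ($1$ at $\mathcal P$; $n-1$ or $n$ on $\mathcal C$; $n+1$ elsewhere), this handles \emph{all} upper bounds at once. So the real content is the matching lower bound.

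**Lower bounds.** At $\mathcal P=(1:0:0)$ we have $F=\lambda p_1^3=\lambda \ell^3$, so $\operatorname{cr}(F)=1$ trivially. Away from $\mathcal P$, Proposition~\ref{prop:HF} gives Hilbert function $(1,n-1,n-1,1)$ at $\mathcal Q$ (so $\deg\ann(F)=2n$) and $(1,n,n,1)$ otherwise (so $\deg\ann(F)=2n+2$). Now I examine the degrees of the minimal generators of $\ann(F)$ using Proposition~\ref{prop:resolutions}: in every case $S/I_X$ has regularity at most $2$, and $\ann(F)=I_X+J_X$ with $J_X$ a canonical ideal, so the minimal generators of $\ann(F)$ sit in degrees $2$ and $3$; in particular the generator degrees are at most $3$. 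Proposition~\ref{prop:lower bound cactus} with $d=3$ then yields $\operatorname{cr}(F)\ge\tfrac13\deg\ann(F)$, which is $\tfrac{2n}{3}$ at $\mathcal Q$ and $\tfrac{2n+2}{3}$ elsewhere — not good enough on its own. To sharpen this, I apply Proposition~\ref{prop:lower bound cactus} with $d=2$ whenever $\ann(F)$ is generated in degree $\le 2$: on $\ell_2\setminus\{\mathcal P\}$ and at all generic points (cases $(vii)$, $(viii)$), the resolution of $S/I_X$ is almost linear, the canonical ideal is either principal (generated by a quadric $q$, case $(vii)$/$(viii)$ arithmetically Gorenstein) so $\ann(F)$ is generated in degree $2$; then $\operatorname{cr}(F)\ge\tfrac12(2n+2)=n+1$, matching the upper bound. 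Symmetrically, at points of $\mathcal C\setminus\{\mathcal P,\mathcal Q\}$ (case $(vi)$) $I_X$ has a linear resolution and $\ann(F)$ is generated in degree $2$, giving $\operatorname{cr}(F)\ge\tfrac12(2n+2)=n+1$ — but here the actual cactus rank is $n$, so one must instead argue that $\operatorname{cr}(F)\ge n$ directly from the Hilbert function $(1,n,n,1)$ (any apolar scheme has degree $\ge\max_i h_{S/\ann(F)}(i)=n$) and that the scheme of $n$ points from the power sum decomposition achieves it.

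**The hard cases.** The genuine obstacle is the flex point $\mathcal Q$ (and the case $n=3$, $\ell_1\setminus\{\mathcal Q\}$). At $\mathcal Q$ with $n>3$, case $(iii)$ shows $\ann(F)$ has a generator in degree $3$ (the resolution of $S/I_X$ is not linear: $\beta_{n-2,n}=1$ forces a cubic generator of the canonical ideal), so Proposition~\ref{prop:lower bound cactus} only gives $\operatorname{cr}(F)\ge\tfrac{2n}{3}$, far from $n+1$. Here I expect the argument must be: $\operatorname{cr}(F)\ge n$ from the Hilbert function, and then one rules out $\operatorname{cr}(F)=n$ by the orbit analysis — if a length-$n$ scheme $X'$ had $I_{X'}\subseteq\ann(F)$ with $[I_{X'}]_2=[\ann(F)]_2=[I_X]_2$, then since $[\ann(F)]_2$ is $\mathfrak S_n$-invariant one forces $X'$ to be $\mathfrak S_n$-invariant (its degree-$2$ ideal, hence the scheme, is determined), reducing to Lemma~\ref{lemma:orbits}; for $n>3$ no invariant length-$\le n$ scheme apolar to this particular $F$ exists, so $\operatorname{cr}(F)=n+1$. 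For $n=3$ at $\mathcal Q$, the orbit $\{(1:\xi:\xi^2),(1:\xi^2:\xi)\}$ \emph{does} give a length-$2=n-1$ apolar scheme, so $\operatorname{cr}(F)=n-1$ there, matching. The delicate point throughout is that the scheme-theoretic (possibly non-reduced) apolar schemes must also be controlled, not just reduced point sets — this is where one leans on the structure of $\ann(F)$ in degree $2$ together with the fact (Proposition~\ref{prop:HF}) that $[\ann(F)]_1=0$ off $\mathcal P$, so a minimal apolar scheme is necessarily non-degenerate, which forces it to be invariant and bounds its length from below via a Bézout/Hilbert-function count as in the $\ell_2$ argument of Proposition~\ref{WR}.
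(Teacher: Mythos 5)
Your overall strategy -- the Ranestad--Schreyer bound of Proposition~\ref{prop:lower bound cactus} played against upper bounds furnished by the apolar schemes of Proposition~\ref{prop:resolutions} -- is the same as the paper's, and your handling of $\mathcal P$, of $\ell_2\setminus\{\mathcal P\}$ (quadric generation gives $cr(F)\ge n+1$, matched by the length-$(n+1)$ Gorenstein scheme supported at $(1:1:\cdots:1)$ rather than by the Waring rank, which is $2(n-1)$ there), and of $\mathcal C\setminus\{\mathcal P,\mathcal Q\}$ (after you correctly abandon the false claim that $\ann(F)$ is generated by quadrics there -- it has $n-1$ cubic generators since $\dim_k[S/I_X]_3=n$ while $\dim_k[S/\ann(F)]_3=1$ -- in favour of the elementary bound $\deg Z\ge\dim_k[S/\ann(F)]_2=n$) agrees with the paper. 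The first genuine gap is the case $n\ge 4$, $F\in\ell_1\setminus\{\mathcal P,\mathcal Q\}$, where $wr(F)=n+1$ but Ranestad--Schreyer only gives $(2n+2)/3$ and the Hilbert function only gives $n$; you do not address it (you single out $n=3$ on $\ell_1$ as hard, but that subcase is covered by the Hilbert-function bound since $wr(F)=n$ there). Your proposed mechanism elsewhere -- ``the degree-two part determines the scheme, reduce to Lemma~\ref{lemma:orbits}'' -- cannot close this case as stated, since Lemma~\ref{lemma:orbits} classifies only reduced orbits and you explicitly leave non-reduced apolar schemes uncontrolled. The paper closes it with a regularity argument you are missing: a degree-$n$ scheme $Z$ with $I_Z\subseteq\ann(F)$ forces $[I_Z]_2=[\ann(F)]_2$ and Hilbert function $1,n,n,\dots$, hence Castelnuovo--Mumford regularity two, hence $I_Z$ is generated by quadrics and equals the ideal $J$ generated by the quadrics of $\ann(F)$; but $\beta_{1,3}(S/\ann(F))=1$ forces $\dim_k[S/J]_3=n+1\ne n$, a contradiction.

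Second, your treatment of $\mathcal Q$ for $n\ge 4$ is self-contradictory: you first invoke Proposition~\ref{prop:resolutions}(iii), which exhibits a reduced apolar set $X$ of $n$ points (the $\mathfrak S_n$-orbit of $(1-n:1:\cdots:1)$), so that your own upper-bound step gives $cr(F)\le n$, and two sentences later you assert that no invariant apolar scheme of length at most $n$ exists in order to conclude $cr(F)=n+1$. The existence claim is the correct one. To be fair, the value $n+1$ at $\mathcal Q$ for $n\ge 4$ asserted in the statement is itself in tension with Proposition~\ref{WR}, which gives $wr(F)=n$ at every point of $\mathcal C\setminus\{\mathcal P\}$ and hence $cr(F)\le n$ there; the paper's own proof of this case rests on reading the degree of $S/\ann(F)$ as $2n+2$ when the Hilbert function $(1,n-1,n-1,1)$ of Proposition~\ref{prop:HF} gives $2n$, and on quoting $wr(F)=n+1$. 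But whatever the correct value, an argument that denies the existence of a scheme it has just used cannot stand.
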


\begin{proof}
We will combine the lower bound provided by Proposition \ref{prop:lower bound cactus} and the upper bound $cr (F) \le wr (F)$, using that we know the Waring rank by Proposition \ref{WR}.
We consider cases depending on the Hilbert function of $S/\ann (F)$.

By Proposition \ref{WR}, we know $wr (F) = 1$ if and only if $F$ is the form at $\mathcal P$, i.e., $F = p_1^3$, which has cactus rank one.

Now consider $F$ at the flex point $\mathcal Q$ of $\mathcal C$. If $n
= 3$ then $\ann (F)$ is a complete intersection of type $(1, 2, 3)$ by
Proposition \ref{prop:resolutions}(ii). Hence, this ideal does not
contain the ideal of a point, which implies $cr (F) \ge 2$. Since $wr
(F) = 2$ by Proposition \ref{WR}, we get $cr (F) =2$. If $n \ge 4$,
then $\ann (F)$ is generated by quadrics (see Proposition
\ref{prop:resolutions}(iii)) and has degree $2n+2$  by Proposition
\ref{prop:HF}. Hence Proposition \ref{prop:lower bound cactus} gives
$cr (F) \ge \frac{2 n +2}{2} = n+1$. We get equality because
Proposition \ref{WR} yields $wr (F) = n+1$.

It remains to consider cubics at points other than the cusp $\mathcal P$ and  the flex point
$\mathcal Q$. Hence, the Hilbert function of $S/\ann (F)$ is $(1, n, n, 1)$  by Proposition \ref{prop:HF}. In particular, $\ann(F)$ has degree $2n+2$. Observe that $\ann (F)$ is generated by quadrics, unless $F$ corresponds to a point on the curve $\mathcal C$ or a point on the line
$\ell _1$. Excluding the latter two cases, Proposition \ref{prop:lower bound cactus} gives $cr (F) \ge \frac{2 n +2}{2} = n+1$. We get equality if $F$ is also not on the line
$\ell _2$ because then $wr (F) = n+1$ by Proposition \ref{WR}. Thus, we are left with considering three situations.

First, assume that  $F$ is a point on   $(\ell_1\cup \mathcal C) \setminus \{\mathcal P, \mathcal Q\}$. If $Z \subset \mathbb P^{n-1}$ is a zero-dimensional subscheme such that its ideal $I_Z$ is contained in $\ann (F)$, then we get $\dim_K [S/I_Z]_2 \ge \dim_K [S/\ann (F)]_2 = n$, where the equality is due to Proposition \ref{prop:HF}. We conclude that $\deg Z \ge \dim_K [S/I_Z]_2 \ge n$, which implies $cr (F) \ge n$. Unless $F$ is a point on the line  $\ell _1$ and $n \ge 4$, we have
 $wr (F) = n$ by Proposition \ref{WR}. Thus we get $cr (F) = n$, as desired.

Now consider the case $n \geq 4$ and $F$ is on the line $\ell _1$. Suppose that $cr(F) = n$.
This means there is a zero-dimensional subscheme $Z \subset \mathbb P^{n-1}$  of degree $n$ such that  $I_Z \subset \ann (F)$. Let $J$ be the ideal generated by the quadrics in $\ann (F)$. Comparing Hilbert functions, we see that $[I_Z]_2 = [J]_2 = [\ann (F)]_2$.
By Proposition \ref{prop:resolutions}(v), we get that $\beta_{1,3}(S/\ann(F)) =1$
and hence $\ann (F)$ is generated by
$J$ together with one cubic. Thus by Proposition \ref{prop:HF} the value of the Hilbert function of $R/J$ in degree 3 is $n+1$.
Since $Z$ has degree $n$, its Hilbert function must be $1, n, n, \ldots$. It follows that the Castelnuovo-Mumford regularity of $I_Z$ is two. Hence $I_Z$ is generated by quadrics, as is $J$. This implies $I_Z = J$, which is a contradiction because the Hilbert function of $R/I_Z$ in degree 3 is $n$.
This proves $cr (F) \ge n+1$. We get equality because $wr (F) = n+1$ by Proposition \ref{WR}.

Second, assume $F$ is a point on the line  $\ell _2$, but not  $\mathcal P$.
Since $\ell _2$ meets $\mathcal C$ only in $\mathcal P$, the ideal $\ann (F)$ is
generated by quadrics (see Proposition \ref{prop:resolutions}). Hence Proposition \ref{prop:lower bound cactus} gives $cr (F) \ge n+1$, as above. In the  proof of
Proposition \ref{prop:resolutions}(vii), we constructed a Gorenstein scheme $X$ of degree
$n+1$ supported at one point with the property that $I_X \subset \ann
(F)$.
Hence, by
definition of the cactus rank, we get $cr (F) \le \deg X = n+1$. Together with the lower bound this yields $cr (F) = n+1$.
\end{proof}

For the cactus rank of general cubics, the reader may look at
\cite{BR}. Now we show that the cactus rank and the Waring rank of
symmetric cubics are usually equal, but that the difference in some
cases can be made arbitrary large when increasing the number of variables. 

\begin{cor} \label{compare}
The cactus rank and the Waring rank of any symmetric cubic $F$ in $n \ge 3$ variables are equal unless $F =
  a_0p_1^3+a_1np_1p_2$ with $a_1\ne 0$ and $n \ge 4$.  In this case we have $cr (F) = n+1 < 2 (n-1) = wr (F)$.
\end{cor}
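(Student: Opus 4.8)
The plan is to assemble the corollary directly from the two previous propositions, since together they already determine both ranks at every point of $\mathbb P^2$. First I would dispose of the cases where the Hilbert function is not $(1,n,n,1)$: at $\mathcal P$ both ranks are $1$ by Propositions~\ref{WR} and~\ref{prop:cr}, and at $\mathcal Q$ with $n=3$ both ranks are $n-1=2$; for $\mathcal Q$ with $n\ge 4$ both ranks are $n+1$. So in each of these cases $cr(F)=wr(F)$ and there is nothing to prove.

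Next I would handle the generic Hilbert function $(1,n,n,1)$ by running through the stratification used in Propositions~\ref{WR} and~\ref{prop:cr}: on $\mathcal C\setminus\{\mathcal P,\mathcal Q\}$ both ranks equal $n$; on $\ell_1\setminus\{\mathcal P,\mathcal Q\}$ with $n=3$ both equal $n$, while for $n\ge 4$ both equal $n+1$; on the ``generic'' locus (away from $\mathcal C$, $\ell_1$, $\ell_2$) both equal $n+1$. In every one of these cases the stated values in Propositions~\ref{WR} and~\ref{prop:cr} coincide, so equality holds. The only stratum where the two propositions disagree is $\ell_2\setminus\{\mathcal P\}$, i.e.\ $F=a_0p_1^3+a_1np_1p_2$ with $a_1\ne 0$: there Proposition~\ref{WR} gives $wr(F)=2(n-1)$ and Proposition~\ref{prop:cr} gives $cr(F)=n+1$. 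For $n=3$ this reads $2(n-1)=4=n+1$, so the two still agree; but for $n\ge 4$ we have $n+1<2(n-1)$, which is precisely the exceptional case in the statement. Collecting these observations finishes the proof.

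There is essentially no obstacle here: the corollary is a bookkeeping consequence of the case analysis already completed. The only point that needs a moment's care is checking that the small-$n$ coincidences ($n=3$ on $\ell_1$ and on $\ell_2$, and the behavior at $\mathcal Q$) do not produce spurious inequalities, and that the inequality $n+1<2(n-1)$ indeed holds exactly when $n\ge 4$. I would present the argument as a short table-style walk-through of the strata, emphasizing that $\ell_2\setminus\{\mathcal P\}$ with $n\ge 4$ is the unique locus on which $cr(F)\ne wr(F)$, with the quantitative gap $wr(F)-cr(F)=2(n-1)-(n+1)=n-3$ growing without bound.
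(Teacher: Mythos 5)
Your proposal is correct and is essentially identical to the paper's proof, which consists of the single line ``Compare Propositions \ref{WR} and \ref{prop:cr}''; you have simply written out the stratum-by-stratum comparison explicitly. One small bookkeeping caveat: at $\mathcal Q$ with $n \ge 4$ the literal third case of Proposition \ref{WR} gives $wr(F) = n$ (since $\mathcal Q \in \mathcal C \setminus \{\mathcal P\}$ and $F$ there is a sum of $n$ cubes) rather than the $n+1$ you quote --- an ambiguity already present in the paper itself, as the proof of Proposition \ref{prop:cr} invokes $wr(F)=n+1$ at that point --- but under either reading the two ranks agree at $\mathcal Q$, so the corollary and your argument are unaffected.
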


\begin{proof}
Compare Propositions \ref{WR}  and \ref{prop:cr}.
\end{proof}

\begin{prop} \label{cubic SLP}
For any symmetric cubic form $F =
  a_0p_1^3+a_1np_1p_2+a_2n^2p_3$, $S/\operatorname{ann}(F)$ satisfies the SLP.
\end{prop}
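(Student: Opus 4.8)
Throughout, write $A=S/\operatorname{ann}(F)$; it is Artinian Gorenstein of socle degree $3$, with Hilbert function $(1,h,h,1)$. The plan is to reduce the SLP to a single bijectivity statement and then verify it on the strata of Proposition~\ref{prop:HF}. The strong Lefschetz condition for a fixed linear form $\ell$ asks that $\times\ell^j\colon[A]_i\to[A]_{i+j}$ have maximal rank for all $i+j\le3$. Under the perfect pairing $[A]_i\times[A]_{3-i}\to[A]_3\cong k$ the map for $(i,j)$ is the transpose of the one for $(3-i-j,j)$, so it is enough to control $[A]_0\to[A]_1$, $[A]_1\to[A]_2$, $[A]_0\to[A]_2$, $[A]_0\to[A]_3$, and the first, third, and fourth of these amount to $\ell\circ F\ne0$, $\ell^2\circ F\ne0$, $\ell^3\circ F\ne0$. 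Since $F\ne 0$, powers of linear forms span $E_3$, so a general $\ell$ has $\ell^3\circ F\ne0$ (hence also $\ell^2\circ F\ne0$ and $\ell\circ F\ne0$); and the set of $\ell$ for which $\times\ell\colon[A]_1\to[A]_2$ fails to be bijective is Zariski-closed (a determinantal locus), hence a proper subset of $\mathbb P(S_1)$ once a single $\ell$ avoids it. Thus it suffices, for each symmetric cubic $F\ne0$, to produce one linear form $\ell$ with $\times\ell\colon[A]_1\to[A]_2$ bijective; a general linear form is then a strong Lefschetz element. Moreover, when $[\operatorname{ann}(F)]_1=0$ (so $[A]_1=S_1$, $\dim_k[A]_2=h=n$), the identity $(\ell u)\circ F=\ell\circ(u\circ F)$ shows this bijectivity is equivalent to the $n$ linear forms $\ell\circ(x_j\circ F)\in E_1$ $(j=1,\dots,n)$ being a basis of $E_1$.

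Next I would split by Proposition~\ref{prop:HF} and Lemma~\ref{lemma:curve}. At the cusp $\mathcal P$ we have $F=p_1^3$ up to scalar and $[\operatorname{ann}(p_1^3)]_1=\langle p_1\rangle^\perp$, so $A\cong k[p_1]/(p_1^4)$ and $\ell=p_1$ is a strong Lefschetz element. At the flex $\mathcal Q$ we have $p_1\in[\operatorname{ann}(F)]_1$, so by the first lemma of Section~\ref{slp section}, $A$ is the apolar algebra of $F$ computed inside the inverse system $k[X_1-X_n,\dots,X_{n-1}-X_n]$ of $(p_1)$; choosing $M_i=nX_i-p_1$ (which span this inverse system and satisfy $\sum_iM_i=0$) as new coordinates, $F$ becomes, up to a rescaling of variables, the symmetric cubic $G=\sum_{i=1}^{n-1}Z_i^3-\bigl(\sum_{i=1}^{n-1}Z_i\bigr)^3$ in $n-1$ variables. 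One checks $[\operatorname{ann}(G)]_1=0$, so $G$ has Hilbert function $(1,n-1,n-1,1)$ and lies in the generic stratum of the $(n-1)$-variable problem; for $n=3$ this stratum consists of binary cubics, and $S/\operatorname{ann}(G)$ is then an Artinian Gorenstein algebra of codimension two, hence a complete intersection, hence has the SLP, so $A$ does too. Every other symmetric cubic has Hilbert function $(1,n,n,1)$ and $[\operatorname{ann}(F)]_1=0$, and the criterion above applies.

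For such $F=a_0p_1^3+a_1np_1p_2+a_2n^2p_3$, from $x_j\circ F=3a_0p_1^2+a_1n(p_2+2X_jp_1)+3a_2n^2X_j^2$ one gets $x_1\circ(x_1\circ F)=(6a_0+2a_1n)p_1+(4a_1n+6a_2n^2)X_1$ and $x_1\circ(x_j\circ F)=6a_0p_1+2a_1n(X_1+X_j)$ for $j\ne1$; the determinant of the $n\times n$ matrix expressing these in the basis $X_1,\dots,X_n$ of $E_1$ factors, with $\alpha=6a_0$, $\beta=2a_1n$, $\gamma=6a_2n^2$, as $\beta^{\,n-2}\bigl(n\alpha\beta+(4-n)\beta^2+(n-1)\alpha\gamma+\beta\gamma\bigr)$, which is nonzero outside the line $\{a_1=0\}$ and the conic $\mathcal K$ cut out by the second factor. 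Hence $\ell=x_1$ gives a Lefschetz element off $\{a_1=0\}\cup\mathcal K$. On $\{a_1=0\}$, where $F=a_0p_1^3+a_2n^2p_3$, one computes $\ell\circ(x_j\circ F)=6a_0C\,p_1+6a_2n^2c_jX_j$ for $\ell=\sum_jc_jx_j$, $C=\sum_jc_j$; taking $\ell=p_1-x_n=x_1+\dots+x_{n-1}$, the corresponding determinant is a nonzero polynomial vanishing inside $\{a_1=0\}$ only at $\mathcal P$ and $(0:0:1)$, and at the latter (the cubic $p_3$, with $x_j\circ p_3=3X_j^2$) one uses $\ell=p_1$, for which $\ell\circ(x_j\circ p_3)=6X_j$ is a basis. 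Finally, the points of $\mathcal K$ off $\{a_1=0\}$ (which include $\mathcal Q$, already handled) are covered by the same determinant computation for a further explicit linear form, e.g.\ $\ell=x_1+x_2$, which degenerates on a different conic meeting $\mathcal K$ only in points already treated. In every case some explicit $\ell$ makes $\times\ell\colon[A]_1\to[A]_2$ bijective, so $A$ has the SLP; only the forms $p_1$, $x_1$, $x_1+x_2$, $x_1+\dots+x_{n-1}$ (and their lower-variable analogues at $\mathcal Q$) are used.

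The main obstacle is this last paragraph. Because $\det(\times x_1)$ is divisible by $a_1$, the form $x_1$ is useless on the whole line $\{a_1=0\}$, which does contain algebras we must treat, so several linear forms are unavoidable; the delicate part is the bookkeeping showing that the chosen forms between them cover all of $\mathbb P^2$ — equivalently, that their ``degeneration loci'' (the conics and lines on which each $\det(\times\ell)$ vanishes) have no common point outside $\mathcal P$, $\mathcal Q$ and $(0:0:1)$ — together with checking that each of these determinants really is not the zero polynomial on the locus it is meant to cover. A secondary point to verify carefully is that the reduction modulo $p_1$ at $\mathcal Q$ lands in the generic stratum in $n-1$ variables (that is, $[\operatorname{ann}(G)]_1=0$), so that only the single descent $n\rightsquigarrow n-1$ is needed and no genuine induction loop arises.
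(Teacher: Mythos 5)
Your overall strategy is the same as the paper's: reduce the SLP to bijectivity of $\times\ell\colon [A]_1\to[A]_2$, observe that this is equivalent to nondegeneracy of the quadratic form $\ell\circ F$, and then compute the determinant of the associated symmetric matrix for a handful of explicit linear forms, checking that their degeneracy loci have no common point outside the already-treated strata. Your reduction to the middle map is in fact more careful than the paper's (the genericity argument is needed, since bijectivity of $[A]_1\to[A]_2$ for a \emph{fixed} $\ell$ does not by itself give $\ell^3\circ F\ne 0$), your treatment of $\mathcal P$ is the same, and your descent at $\mathcal Q$ to the symmetric cubic $p_3-p_1^3$ in $n-1$ variables is a legitimate alternative to the paper's direct verification that $x_1$ is a Lefschetz element there (though it makes the flex point depend on the completion of the generic-stratum argument in $n-1$ variables, so the covering step below must genuinely work for all $n\ge 3$). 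The computation on the line $a_1=0$ with $\ell=x_1+\dots+x_{n-1}$ checks out.

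The genuine gap is the last step: the conic $\mathcal K$ cut out by the second factor of $\det(\times x_1)$ is never actually covered. You assert that $\ell=x_1+x_2$ ``degenerates on a different conic meeting $\mathcal K$ only in points already treated,'' but this is exactly the claim that needs proof: by B\'ezout two conics meet in four points, and there is no a priori reason these four points all lie among $\mathcal P$, $\mathcal Q$, $(0:0:1)$ and the line $a_1=0$ (nor that the relevant determinant is a nonzero polynomial of the shape you expect). Since $\mathcal K$ passes through $\mathcal Q$ and meets the curve $\mathcal C$, an unlucky intersection point would leave a symmetric cubic with no verified Lefschetz element. The paper avoids this entirely by a better first choice: for $\ell=p_1$ the determinant is $3n^{2n}(a_1+3a_2)^{n-1}(a_0+a_1+a_2)$, so the degeneracy locus is the union of the two \emph{lines} $\ell_1\cup\ell_3$, and one then only has to check that the determinants for $x_1$ and $nx_1-p_1$ do not vanish simultaneously at any point of those two lines. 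I recommend replacing your first form $x_1$ by $p_1$ and redoing the covering along the paper's lines; as written, the argument is incomplete precisely at the step you flag as ``the main obstacle.''
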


\begin{proof} If we are at the cusp $\mathcal P = (1:0:0)$ of $\mathcal C$, $F=p_1^3$, $wr(F)=1$, the Hilbert function of $S/\operatorname{ann}(F)$ is $(1,1,1,1)$, $[\operatorname{ann}(F)]_1=\langle x_i-x_j, i\ne j\rangle _1$ and clearly $S/\operatorname{ann}(F)$ has the SLP.

If we are at the flex $\mathcal Q = (2:-3:1)$ of $\mathcal C$ i.e. $F=2p_1^3-3np_1p_2+n^2p_3$ and $n=3$ we have $wr(F)=2$, the Hilbert function of $S/\operatorname{ann}(F)$ is $(1,2,2,1)$ (Proposition \ref{prop:HF}) and  $\operatorname{ann}(F)$ is a complete intersection artinian ideal of codimension 2. Hence, it has the SLP (see \cite[Proposition 4.4]{HMNW}).
 If we are at the flex $\mathcal Q$ of $\mathcal C$ and $n>3$,  we have $wr(F)=n$ (Proposition \ref{WR}), the Hilbert function of $S/\operatorname{ann}(F)$ is $(1,n-1,n-1,1)$ (Proposition \ref{prop:HF}), we easily check that $\{ x_i-x_1\}_{i=2,\cdots ,n}$ is a basis of $S/\operatorname{ann}(F)$ and $x_1$ is an SL element since $x_1^{3-2i}:[S/\operatorname{ann}(F)]_i\longrightarrow  [S/\operatorname{ann}(F)]_{3-i}$  is an isomorphism for $i=0,1$.

Assume  that $(a_0,a_1,a_2)$ is neither the flex nor the cusp of $\mathcal C$.
 According to Proposition \ref{prop:HF}
  the Hilbert function of $S/\operatorname{ann}(F)$ is
      $(1,n,n,1)$ and we distinguish two cases:

\vskip 2mm
 \noindent Case 1: $wr(F)=n$. In this case, $[\operatorname{ann}(F)]_2=[I_X]_2$ where $X$ is the support of the Waring decomposition of $F$. Therefore, any non-zero divisor on $X$ is an SL element.

\vskip 2mm
 \noindent Case 2: $wr(F)>n$.  We will show that 3 linear forms are
 sufficient to provide an SL elements for all of them. To prove that $\ell $ is an SL element for $S/\operatorname{ann}(F)$ it suffices to show that  $\ell :[S/\operatorname{ann}(F)]_1\longrightarrow  [S/\operatorname{ann}(F)]_{2}$  is injective. The map $\ell :[S/\operatorname{ann}(F)]_1\longrightarrow  [S/\operatorname{ann}(F)]_{2}$  fails injectivity if there is a linear form $g\in S$ such that $\ell g=0$ in $[S/\operatorname{ann}(F)]_{2}$, i.e. there is a linear form $g\in S$ such that $\ell g \circ F=0$. So, our strategy is to study when the quadratic form $\ell \circ F$ drops rank.
 We first observe that $(a_0,a_1,a_2)\notin \mathcal C$.
 We   consider $\ell =\sum _{i=1}^nx_i$ and we will check when $\ell $ is an SL element for $S/\operatorname{ann}(F)$. We claim that quadratic form
    \[
    \begin{array}{rcl}
    \ell \circ (a_0p_1^3+a_1np_1p_2+a_2n^2p_3) & = & 3na_0p_1^2 +
    a_1n^2p_2+2na_1p_1^2 + 3a_2n^2p_2 \\
    & = &  (3a_0+2a_1)np_1^2 + (a_1+3a_2)n^2p_2
  \end{array}
\]
   drops rank  exactly when $(a_0,a_1,a_2)\in \ell_1\cup \ell _3$. Indeed, the matrix 
   \[M_q=
     \left[\begin{smallmatrix} (a_1+3a_2)n^2+(3a_0+2a_1)n & (3a_0+2a_1)n & (3a_0+2a_1)n & \cdots & (3a_0+2a_1)n \\
  (3a_0+2a_1)n & (a_1+3a_2)n^2+(3a_0+2a_1)n &  (3a_0+2a_1)n & \cdots & (3a_0+2a_1)n \\
  (3a_0+2a_1)n & (3a_0+2a_1)n & (a_1+3a_2)n^2+(3a_0+2a_1)n &   \cdots & (3a_0+2a_1)n \\
  \vdots & \vdots & \vdots & \vdots & \vdots \\
   (3a_0+2a_1)n & (3a_0+2a_1)n &    (3a_0+2a_1)n & \cdots & (a_1+3a_2)n^2+(3a_0+2a_1)n
   \end{smallmatrix}\right]
   \]
   associated to the quadratic form $q=  (3a_0+2a_1)np_1^2 + (a_1+3a_2)n^2p_2$ has determinant
   \[\det (M_q)=3n^{2n}(a_1+3a_2)^{n-1}(a_0+a_1+a_2).\]
   Therefore, the quadratic form $ \ell \circ
   (a_0p_1^3+a_1np_1p_2+a_2n^2p_3)$ drops rank exactly when
   $(a_0,a_1,a_2)\in \ell_1\cup \ell _3$,  and we conclude that $\ell
   =\sum _{i=1}^nx_i$ is an SL element except on the lines $\ell _1$ and $\ell _3$.
   Finally, we consider the quadratic forms
   \[
     q_1:=x_1 \circ (a_0p_1^3+a_1np_1p_2+a_2n^2p_3)  =  3a_0p_1^2 +
     2a_1nx_1p_1+3a_2n^2x_1^2 + a_1np_2
   \]
   and
   \[
     q_2:= (nx_1-p_1) \circ (a_0p_1^3+a_1np_1p_2+a_2n^2p_3)  = -2a_1np_1^2 +
     2a_1n^2x_1p_1+3a_2n^3x_1^2 -3a_2n^2p_2.
   \]
   Computing the determinant of the symmetric matrices associated to $q_1$ and $q_2$ we get that the quadratic form $q_1$ and $q_2$  never drop rank simultaneously  in points $(a_0,a_1,a_2)$ of the lines $\ell _3$ and $\ell _1$.
   Therefore on these lines either   $\ell=x_1$ or $\ell=nx_1-\sum
   _{i}x_ i$ is an SL element.
 \end{proof}
 
 \section{Generic Waring rank for symmetric forms}\label{section:genWR}
 As we have seen in the previous section, the generic symmetric cubic
 form in $n$ variables has Waring rank $n+1$. We will now see that we
 can get bounds for the Waring rank of generic symmetric quartics and
 quintics using the same kind of orbits that come into the
 decompositions of Theorem~\ref{thm:powersums}. We start by looking at
 the power sum expansion
 \[
   F(\alpha_0,\alpha_1,\alpha_2)  = \alpha_0^3 h_1^3 + \sum_{i=1}^n
   (\alpha_1h_1+\alpha_2 X_i)^3 =\alpha_2^3 p_3 + 3\alpha_1\alpha_2^2
   p_1p_2 + (\alpha_0^3+n\alpha_1^3+3\alpha_1^2\alpha_2)p_1^3
 \]
 which gives rise to a rational map from $\mathbb P^2$ with
 coordinates $(\alpha_0\colon\alpha_1\colon\alpha_2)$ to $\mathbb
 P_2(\langle p_3,p_2p_1,p_1^3\rangle)$. In order to see that this map is
 generically onto, we look at the Jacobian
 \[
   \frac{\partial F_i}{\partial \alpha_j} = \begin{bmatrix}
     0 & 0 & 3 \alpha_0^2\\
     0 & 3\alpha_2^2 & 3n\alpha_1^2+6\alpha_1\alpha_2\\
     3\alpha_2^2 & 6 \alpha_1\alpha_2 & 3\alpha_1^2\\
   \end{bmatrix}
 \]
 with determinant equal to $27\alpha_0^2\alpha_2^4$. This is non-zero
 except when $\alpha_0=0$, which corresponds to the cubic curve $\mathcal C$
 and when $\alpha_2=0$, which corresponds to the point $\mathcal P$,
 i.e., the cusp of $\mathcal C$.
 
 \begin{prop}\label{prop:generic_rank}
   The generic symmetric quartic has Waring rank at most $\binom{n+1}{2}+1 =
   \binom{n+2}{2}-n$ and the generic symmetric quintic has Waring rank at
   most $\binom{n+2}{2}$. 
 \end{prop}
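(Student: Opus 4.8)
The plan is, for each of the degrees $\delta=4$ and $\delta=5$, to write down — in the spirit of the cubic computation recalled just before the proposition — an explicit family of power sum decompositions of symmetric $\delta$-forms, parametrized by as many scalars as the dimension of the space $V_\delta$ of symmetric forms of degree $\delta$, and then to show that the resulting polynomial map $\Phi_\delta$ into $V_\delta$ is dominant. Since the number of parameters equals $\dim V_\delta$, it suffices to show that the Jacobian determinant of $\Phi_\delta$ is not identically zero: this forces $\Phi_\delta$ to be dominant, so a general symmetric $\delta$-form lies in its image and hence admits a power sum decomposition with the prescribed number of terms. And to see that this Jacobian determinant does not vanish identically it is enough to exhibit one rational point of the parameter space where it is nonzero.

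Concretely, for quartics I would take
\[
\Phi_4(\alpha_0,\ldots,\alpha_4)=\alpha_0^4h_1^4+\sum_{i=1}^n(\alpha_1h_1+\alpha_2X_i)^4+\sum_{1\le i<j\le n}\bigl(\alpha_3h_1+\alpha_4(X_i+X_j)\bigr)^4,
\]
which has $1+n+\binom{n}{2}=\binom{n+1}{2}+1$ summands and maps into $V_4$, of dimension $5$ for $n\ge4$. For quintics the counting forces two orbits of the first type together with one of the second,
\[
\Phi_5(\beta_0,\ldots,\beta_6)=\beta_0^5h_1^5+\sum_{i=1}^n(\beta_1h_1+\beta_2X_i)^5+\sum_{i=1}^n(\beta_3h_1+\beta_4X_i)^5+\sum_{1\le i<j\le n}\bigl(\beta_5h_1+\beta_6(X_i+X_j)\bigr)^5,
\]
which has $1+2n+\binom{n}{2}=\binom{n+2}{2}$ summands and maps into $V_5$, of dimension $7$ for $n\ge5$. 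In both cases the number of parameters equals the target dimension, which is what singles out these families.

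Then I would expand the coordinates of $\Phi_4$ and $\Phi_5$ in the power-sum monomial basis of $V_\delta$, using $\sum_iX_i^k=p_k$ together with $\sum_{i<j}(X_i+X_j)^2=p_1^2+(n-2)p_2$, $\sum_{i<j}(X_i+X_j)^3=3p_1p_2+(n-4)p_3$, $\sum_{i<j}(X_i+X_j)^4=3p_2^2+4p_1p_3+(n-8)p_4$ and $\sum_{i<j}(X_i+X_j)^5=10p_2p_3+5p_1p_4+(n-16)p_5$, each obtained by a short computation from $\sum_{i<j}X_i^aX_j^b=p_ap_b-p_{a+b}$ and $\sum_{i<j}X_i^aX_j^a=\tfrac12(p_a^2-p_{2a})$. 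What makes the rank computation feasible is that the Jacobian matrices are sparse with a staircase shape: the column $\partial/\partial\alpha_0$ (resp. $\partial/\partial\beta_0$) is supported only in the $p_1^\delta$-coordinate; the $p_2^2$-coordinate of $\Phi_4$, and the $p_1p_2^2$- and $p_2p_3$-coordinates of $\Phi_5$, are fed only by the last orbit and only through its ``outermost'' parameter $\alpha_4$ (resp. $\beta_6$); and among the first-type orbits the $p_\delta$-coordinate involves only $\alpha_2$ (resp. $\beta_2$ and $\beta_4$). Peeling these rows and columns off by Laplace expansion collapses $\det\mathrm{Jac}(\Phi_4)$ to a nonzero scalar multiple of $\alpha_0^3\alpha_2^5\alpha_4^5\bigl[(n-4)\alpha_1\alpha_4-(n-2)\alpha_2\alpha_3-\alpha_2\alpha_4\bigr]$, which is manifestly not identically zero, and collapses $\det\mathrm{Jac}(\Phi_5)$ to a monomial in $\beta_0$ and $\beta_6$ times a $4\times4$ determinant in $\beta_1,\beta_2,\beta_3,\beta_4$ alone (coming from the two first-type orbits, hence independent of $n$). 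That last determinant vanishes when the two first-type orbits coincide, i.e. when $\beta_1=\beta_3$ and $\beta_2=\beta_4$, so it must be evaluated at a genuinely asymmetric point; a direct check at $(\beta_1,\beta_2,\beta_3,\beta_4)=(1,1,2,1)$ gives a nonzero value. Thus $\Phi_4$ and $\Phi_5$ are dominant and the asserted upper bounds follow. The main obstacle is exactly this last bookkeeping for $\Phi_5$ — picking a parametrization with precisely seven free parameters, tracking which monomials each orbit contributes, and confirming that the residual $4\times4$ block is nondegenerate at a suitable point; for the finitely many small values of $n$ not covered by the dimension counts one verifies the statement directly, or reads the proposition for $n$ sufficiently large.
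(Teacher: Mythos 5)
Your proposal is correct and follows essentially the same route as the paper: the same two parametrized families of power sum decompositions (with $1+n+\binom{n}{2}$ and $1+2n+\binom{n}{2}$ terms, the quintic case using two singleton-type orbits and one pair-type orbit), the same expansion in the power-sum basis, and the same Jacobian-rank criterion for dominance, with the determinant factoring exactly as you describe (the paper obtains $2^{10}3^2\alpha_0^3\alpha_2^5\alpha_4^5\bigl((n-2)\alpha_2\alpha_3-(n-4)\alpha_1\alpha_4+\alpha_2\alpha_4\bigr)$ for quartics and a monomial times $(\alpha_1\alpha_6-\alpha_2\alpha_5)^4$ from the residual $4\times4$ block for quintics). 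No substantive differences.
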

 
 \begin{proof}
   We start by using the decomposition
   \[
     F(\alpha_0,\alpha_1,\alpha_2,\alpha_3,\alpha_4)  = \alpha_0^4 h_1^4 + \sum_{i=1}^{n}
     (\alpha_1 h_1+\alpha_2 X_i)^4
     + \sum_{1\le i < j \le n} (\alpha_3 h_1 + \alpha_4 X_i+\alpha_4
     X_j)^4,
   \]
   which produces a symmetric quartic form of Waring rank at most $1 +
   n + \binom{n}{2} = \binom{n+1}{2}+1 = \binom{n+2}{2}-n$. We expand this
   into the basis of symmetric quartics given by
   $\{p_4,p_3p_1,p_2^2,p_2p_1^2,p_1^4\}$ to get
   \[
     \begin{split}
       F(\alpha_0,\alpha_1,\alpha_2,\alpha_3,\alpha_4)  = (\alpha_2^4 +
       (n-8)\alpha_4^4)p_4 + (4\alpha_1\alpha_2^3 + 4
       (n-4)\alpha_3\alpha_4^3 + 4 \alpha_4^4)p_3p_1 +
       3 \alpha_4^4 p_2^2 \\+ (6\alpha_1^2\alpha_2^2 +
       6(n-2)\alpha_3^2\alpha_4^2 + 12 \alpha_3\alpha_4^3)p_2p_1^2 \\+
       (\alpha_0^4 + n \alpha_1^4 + 4 \alpha_1^3\alpha_2 + \binom{n}{2}
       \alpha_3^4 + 4(n-1)\alpha_3^3\alpha_4  + 6 \alpha_3^2\alpha_4^2)p_1^4.
     \end{split}
   \]
   Thus we have produced a rational map from $\mathbb P^4$ with
   coordinates
   $(\alpha_0\colon\alpha_1\colon\alpha_2\colon\alpha_3\colon\alpha_4)$
   to $\mathbb P^4 = \mathbb P(\langle
   p_4,p_3p_1,p_2^2,p_2p_1^2,p_1^4\rangle)$. In order to show that this
   is generically onto, which proves our statement, we consider the
   Jacobian
   \[
     \left(\frac{  \partial F_j}{\partial \alpha_i}\right)  =
     \left[\begin{smallmatrix}
         0& 0& 0& 0& 4\alpha_0^3\\
         0& 4\alpha_2^3& 0& 12\alpha_1\alpha_2^2&4n\alpha_1^3+12\alpha_1^2\alpha_2\\
         4\alpha_2^3& 12\alpha_1\alpha_2^2& 0& 12\alpha_1^2\alpha_2& 4\alpha_1^3\\
         0&4(n-4)\alpha_4^3& 0& 12(n-2)\alpha_3\alpha_4^2+12\alpha_4^3&2n(n-1)\alpha_3^3+12(n-1)\alpha_3^2\alpha_4+12\alpha_3\alpha_4^2\\
         4(n-8)\alpha_4^3&12(n-4)\alpha_3\alpha_4^2+16\alpha_4^3& 12\alpha_4^3&12 (n-2)\alpha_3^2\alpha_4+36\alpha_3\alpha_4^2&
         4(n-1)\alpha_3^3+12\alpha_3^2\alpha_4
       \end{smallmatrix}\right]
   \]
   which has determinant
   \[\begin{split}
       \det  \left(\frac{  \partial F_j}{\partial \alpha_i}\right)  =
       (4\alpha_0^3)(4\alpha_2^3)(12\alpha_4^3)\det \left[\begin{matrix}4\alpha_2^3&
           12\alpha_1\alpha_2^2\\4(n-4)\alpha_4^3 &
           12(n-2)\alpha_3\alpha_4^2+12\alpha_4^3\end{matrix} \right]\\
       =
       2^{10}3^2\alpha_0^3\alpha_2^5\alpha_4^5((n-2)\alpha_2\alpha_3-(n-4)\alpha_1\alpha_4+\alpha_2\alpha_4).
     \end{split}
   \]
   This is generically non-zero, showing that the generic symmetric quartic
   has power sum expansion with at most $\binom{n+1}{2}+1$ terms.
   
   For the symmetric quintics, we use the power sum expansion
   \[\begin{split}
       F(\alpha_0,\alpha_1,\alpha_2,\alpha_3,\alpha_4,\alpha_5,\alpha_6)  = \alpha_0^5 h_1^5 + \sum_{i=1}^{n}
       (\alpha_1 h_1+\alpha_2 X_i)^5
       + \sum_{1\le i < j \le n} (\alpha_3 h_1 + \alpha_4 X_i+\alpha_4
       X_j)^5 \\ +\sum_{i=1}^{n}
       (\alpha_5 h_1+\alpha_6 X_i)^5
     \end{split}
   \]
   which gives a rational map from $\mathbb P^6$ to $\mathbb P^6 =
   \mathbb P(\langle
   p_5,p_4p_1,p_3p_2,p_3p_1^2,p_2^2p_1,p_2p_1^3,p_1^5\rangle)$ given by
   the expansion
   \[\begin{split}
       F(\alpha_0,\alpha_1,\dots,\alpha_6)  =
       (\alpha_2^5+(16-n)\alpha_4^5+\alpha_6^5)p_5 +
       (5\alpha_1\alpha_2^4+5(n-8)\alpha_3\alpha_4^4 +
       5\alpha_4^5+5\alpha_5\alpha_5^4)p_4p_1 \\+ 10\alpha_4^5p_3p_2 +
       (10\alpha_1^3\alpha_2^3+10(n-4)\alpha_3^2\alpha_4^3 + 20
       \alpha_3\alpha_4^4+10\alpha_5^2\alpha_6^3)p_3p_1^2 \\+
       15\alpha_3\alpha_4^4p_2^2p_1+(10\alpha_1^3\alpha_2^2+10(n-2)\alpha_3^3\alpha_4^2+30\alpha_3^2\alpha_4^3+10\alpha_5^3\alpha_6^2)p_2p_1^3\\+
       (\alpha_0^5+n\alpha_1^5 + 5\alpha_1^4\alpha_2+5(n-1)\alpha_3^4\alpha_4
       +10\alpha_3^3\alpha_4^2+5\alpha_5^4\alpha_6+\binom{n}{2}\alpha_3^5)p_1^5
       .
     \end{split}
   \]
   
   The Jacobian of this map is given by
   \[\left[
       \begin{smallmatrix}
         0&0&0&0&0&0&5\alpha_0^4\\
         0&5\alpha_2^4&0&20\alpha_1\alpha_2^3&0&30\alpha_1^2\alpha_2^2&5n\alpha_1^4+20\alpha_1^3\alpha_2\\
         5\alpha_2^4&20\alpha_1\alpha_2^3&0&30\alpha_1^2\alpha_2^2&0&20\alpha_1^3\alpha_2&5\alpha_1^4\\
         0&5(n-8)\alpha_4^4&0&20(n-4)\alpha_3\alpha_4^3+20\alpha_4^4&15\alpha_4^4&30(n-2)\alpha_3^2\alpha_4^2+60\alpha_3\alpha_4^3&5\binom{n}{2}\alpha_3^4+20(n-1)\alpha_3^3\alpha_4+30\alpha_3^2\alpha_4^2\\
         5(n-16)\alpha_4^4&20(n-8)\alpha_3\alpha_4^3+25\alpha_4^4&50\alpha_4^4&30(n-4)\alpha_3^2\alpha_4^2+80\alpha_3\alpha_4^3 &60\alpha_3\alpha_4^3 &20(n-2)\alpha_3^3\alpha_4+90\alpha_3^2\alpha_4^2&5(n-1)\alpha_3^4\alpha_4+20\alpha_3^3\alpha_4\\
         0&5\alpha_6^4&0&20\alpha_5\alpha_6^3&0&30\alpha_5^2\alpha_6^2&5n\alpha_5^4+20\alpha_5^3\alpha_6\\
         5\alpha_6^4&20\alpha_5\alpha_6^3&0&30\alpha_5^2\alpha_6^2&0&20\alpha_5^3\alpha_6&5\alpha_5^4\\
       \end{smallmatrix}
     \right]
   \]
   and we get the determinant of that as
   \[
     (5\alpha_0^4)(15\alpha_4^4)(50\alpha_4)^4\det\left[
       \begin{smallmatrix}
         0&5\alpha_2^4&20\alpha_1\alpha_2^3&30\alpha_1^2\alpha_2^2\\
         5\alpha_2^4&20\alpha_1\alpha_2^3&30\alpha_1^2\alpha_2^2&20\alpha_1^3\alpha_2\\
         0&5\alpha_6^4&20\alpha_5\alpha_6^3&30\alpha_5^2\alpha_6^2\\
         5\alpha_6^4&20\alpha_5\alpha_6^3&30\alpha_5^2\alpha_6^2&20\alpha_5^3\alpha_6\\
       \end{smallmatrix}\right]
     = 2^3\cdot 3\cdot 5^9\alpha_0^4\alpha_2^4\alpha_4^8\alpha_6^4(\alpha_1\alpha_6-\alpha_2\alpha_5)^4.
   \]
   This is generically non-zero. In fact, it is non-zero as long as the
   linear forms involved in the decomposition are distinct. Thus
   the generic symmetric quintic form has Waring rank at most
   $\binom{n+2}{2}$.    
 \end{proof}
 
 \begin{rmk}
   In the case of cubics, Proposition~\ref{WR} shows that the generic rank
   is indeed $n+1$. In the case of quartics, we know from the Hilbert
   function that the generic rank is at least $\binom{n+1}{2}$ and our
   upper bound is just one more. However, we are not able to use this
   approach to prove that the generic rank of symmetric quartics
   cannot be $\binom{n+1}{2}$. Observe that the generic symmetric
   form might have a Waring decomposition that is not invariant under
   the  action of the symmetric group.  In the case of quintics, the
   lower bound given by the Hilbert function is again
   $\binom{n+1}{2}$ and our upper bound is $n+1$ higher than
   this. Here we might be able to use the resolution in order to show
   that the generic rank is higher than $\binom{n+1}{2}$, but we have
   not been able to do this in general.

   For symmetric forms of degree six and higher, we cannot expect that
   the generic symmetric form can be expanded into powers of linear
   forms in a similar way as the complete symmetric form. This can be
   seen from looking at the dimension of the family of symmetric forms
   compared to the number of parameters that can be involved in the
   expansions. For example, in the case of sextics, the dimension of
   the family is ten while an expansion corresponding to the
   expansion we have for quartics would give only a nine-dimensional
   family. For higher degrees, the difference in dimensions grows
   larger and larger.
 \end{rmk}

 \begin{rmk}
   We can now look more closely at the claims of
   Remark~\ref{rmk:bounds}. Using the rational map from $\mathbb P^4$
   to $\mathbb P^4$ described in the proof of Proposition~\ref{prop:generic_rank} we can look at the preimage of $h_{n,4}$. Using the fact that
\[
h_{n,4} = \frac{1}{24}(6 p_4 + 8 p_3p_1 + 3p_2^2 + 6 p_2p_1^2 + p_1^4)
\]
we get that the preimage is given by the following  system of equations
\[
  \left\{
  \begin{array}{rcl}
    \alpha_2^4+(n-14)\alpha_4^4&=&0 \\
    \alpha_1\alpha_2^3-\alpha_4^4+(n-4)\alpha_3\alpha_4^3&=& 0\\
    \alpha_1^2\alpha_2^2+2\alpha_3\alpha_4^3+(n-2)\alpha_3^2\alpha_4^2-\alpha_4^4&=& 0\\
    \alpha_0^4+4\alpha_1^3\alpha_2+n\alpha_1^4+6\alpha_3^2\alpha_4^2+4(n-1)\alpha_3^3\alpha_4+n(n-1)/2\alpha_3^4-\alpha_4^4&=& 0. \\
  \end{array}\right.
\]

From the first three equations, we get
\[
  \alpha_1^2\alpha_2^6 = (\alpha_4^4-(n-4)\alpha_3 \alpha_4^3)^2 = (14-n)\alpha_4^4(\alpha_4^4-2\alpha_3\alpha_4^3-(n-2)\alpha_3^2\alpha_4^2) 
\]
which gives 
\[
  \alpha_4^6(\alpha_4^2n-4\alpha_4\alpha_3n+8\alpha_3^2n-13\alpha_4^2+36\alpha_4\alpha_3-12\alpha_3^2)
  = 0.
\]
For all $n$ we can solve this equation with $\alpha_4\ne 0$ and hence
get solutions for $\alpha_0$, $\alpha_1$ and  $\alpha_2$ in terms of
$\alpha_4$. Because of the first equation, we see that $\alpha_2\ne
0$ when $n\ne 14$. When $n=14$, we get that $\alpha_2=0$ from the first
equation and then we can see that the Waring rank of
$F(\alpha_0,\alpha_1,0,\alpha_3,\alpha_4)$ is less than the Waring
rank of $h_{14,4}$ showing that the solution to our equations give
$F(\alpha_0,\alpha_1,0,\alpha_3,\alpha_4)=0$.

Generic rank in the case $n=14$ is $170$ which is far larger than $120$
given by the decomposition in our theorem. However, it is unclear if
we can find Waring decompositions with fewer terms. If we look for
symmetric Waring decompositions, we need to take a union of orbits and
it will not be sufficient to take $n$-point orbits since such
decompositions do not give any contribution to the coefficient of $p_2^2$. 

For $n=6$, the generic rank is lower than the bound given by our
construction, $21$ instead of $22$. Also for $n<6$, generic rank is lower
by one. For $n>6$, generic rank is always higher.
 \end{rmk}
 

\end{document}